\numberwithin{equation}{section}
\newtheorem{theorem}[equation]{Theorem}
\newtheorem{lemma}[equation]{Lemma}
\newtheorem{proposition}[equation]{Proposition}
\newtheorem{corollary}[equation]{Corollary}
\theoremstyle{definition}
\newtheorem{definition}[equation]{Definition}
\newtheorem{example}[equation]{Example}
\theoremstyle{remark}
\newtheorem{remark}[equation]{Remark}
\def\bga{\boldsymbol{\alpha}}
\def\bgb{\boldsymbol{\beta}}
\def\bgl{\boldsymbol{\lambda}}
\def\bkp{\boldsymbol{\kappa}}
\def\bmu{\boldsymbol{\mu}}
\def\bnu{\boldsymbol{\nu}}
\def\btau{\boldsymbol{\tau}}
\def\bl{\boldsymbol{\ell}}
\def\bk{\mathbf{k}}
\def\bx{\mathbf{x}}
\def\by{\mathbf{y}}
\def\gl{\lambda}
\def\bz{\mathbb{Z}}
\def\fa{\mathfrak{a}}
\def\fb{\mathfrak{b}}
\def\fc{\mathfrak{c}}
\def\fs{\mathfrak{s}}
\def\ft{\mathfrak{t}}
\def\fu{\mathfrak{u}}
\def\fv{\mathfrak{v}}
\def\End{\mathrm{End}}
\def\std{\mathrm{Std}}
\def\mpn{\mathscr{P}_{m,n}}
\newdimen\hoogte    \hoogte=16pt    
\newdimen\breedte   \breedte=16pt   
\newdimen\dikte     \dikte=1.0pt    
\newenvironment{point}[2]%
  {\vspace{0.5\jot}\ifx*#2\let\pointlabel\relax\else\def\pointlabel{#2}\fi
   \refstepcounter{equation}\trivlist
   \item[\hskip\labelsep\theequation.
         \ifx\pointlabel\relax\else\space\pointlabel\space\fi]
   \ignorespaces #1
  \vspace{0.5\jot}}{\relax}
\newenvironment{Young}{\begingroup
       \def\vr{\vrule height0.6\hoogte width\dikte depth 0.2\hoogte}
       \def\fbox##1{\vbox{\offinterlineskip
                    \hrule height0.6\dikte
                    \hbox to \breedte{\vr\hfill$##1$\hfill\vr}
                    \hrule height0.6\dikte}}
       \vtop\bgroup \offinterlineskip \tabskip=-\dikte \lineskip=-\dikte
            \halign\bgroup &\fbox{##\unskip}\unskip  \crcr}
     {\egroup\egroup\endgroup}
\def\diagram(#1){\begin{Young}#1\cr\end{Young}}
\def\Tritab(#1|#2|#3){\begin{Young}#1\cr\end{Young}\,;\,\
                              \begin{Young}#2\cr\end{Young}\,;\,\
                              \begin{Young}#3\cr\end{Young}}
\def\Twotab(#1|#2){\begin{Young}#1\cr\end{Young}\,;\,
\begin{Young}#2\cr\end{Young}}
\newenvironment{colorYoung}{\begingroup
       \def\vr{\vrule height0.8\hoogte width\dikte depth 0.2\hoogte}
       \def\fbox##1{\vbox{\offinterlineskip
                    \hrule height1.0\dikte
                    \hbox to 1.4\breedte{\vr\hfill$##1$\hfill\vr}
                    \hrule height1.2\dikte}}
       \vtop\bgroup \offinterlineskip \tabskip=-\dikte \lineskip=-\dikte
            \halign\bgroup &\fbox{##\unskip}\unskip  \crcr}
     {\egroup\egroup\endgroup}
\def\Tricolor(#1|#2|#3){\begin{colorYoung}#1\cr\end{colorYoung}\,;\,\
                              \begin{colorYoung}#2\cr\end{colorYoung}\,;\,\
                              \begin{colorYoung}#3\cr\end{colorYoung}}
 \def\Twocolor(#1|#2){\begin{colorYoung}#1\cr\end{colorYoung}\,;\,\
                              \begin{colorYoung}#2\cr\end{colorYoung}}
 \def\colordiagram(#1){\begin{colorYoung}#1\cr\end{colorYoung}}
\begin{document}
\setlength{\itemsep}{0.25cm}
\fontsize{13}{\baselineskip}\selectfont
\setlength{\parskip}{0.4\baselineskip}
\title[Cyclotomic Schur superalgebras]{\fontsize{9}{\baselineskip}\selectfont Cyclotomic $q$-Schur superalgebras}
\author{Deke Zhao}

\address{\bigskip\hfil\begin{tabular}{l@{}}
          School of Applied Mathematics\\
            Beijing Normal University at Zhuhai, Zhuhai, 519087\\
             China\\
             E-mail: \textit{deke@amss.ac.cn} \hfill
          \end{tabular}}

\subjclass[2020]{Primary 16W55, 16G99; Secondary 05A99, 20C99.}
\dedicatory{Dedicated to Professor Yingbo Zhang on the occasion of her 75th birthday}
\keywords{Cyclotomic Hecke algebras; (Cyclotomic) $q$-Schur algebras; Semistandard tableaux; Cellular algebras.}
\vspace*{-3mm}
\begin{abstract}The paper aims to introduce the cyclotomic $q$-Schur superalgebra via the permutation supermodules of the cyclotomic Hekce algebra and investigate its structure. In particular, we show that the cyclotomic $q$-Schur superalgebra is a cellular superalgebra and establish the double centralizer property between the cyclotomic Hecke algebra and the cyclotomic $q$-Schur superalgebra.
\end{abstract}
\maketitle
\section{Introduction}
In \cite{DJM}, Dipper, James and Mathas introduced the cyclotomic $q$-Schur algebra for the Ariki-Koike algebra or so-called the cyclotomic Hecke algebra associated to the complex reflection group of type $G(m,1,n)$, which is a natural generalization of Dipper and James' $q$-Schur algebra \cite{DJ89} for the Iwhaori-Hecke algebra of type $A$.
They showed the cyclotomic $q$-Schur algebra is a cellular algebra in the sense of Graham and Lehrer \cite{GL} and Mathas \cite[Theorem~5.3]{Mathas-ASPM} showed the Schur-Weyl duality (i.e., the double centralizer property) holds between the cyclotomic Hecke algebra and the cyclotomic $q$-Schur algebra under certain conditions.

Motivated by the quantum Schur-Weyl duality between quantum superalgebra and Iwahori-Hecke algebra of type $A$ proved in \cite{Moon,M}, Du and Rui \cite{DR} introduced the quantum Schur superalgebra for the Iwahori-Hecke algebra of type $A$ and showed it is a cellular (super)algebra. In \cite{Zhao-duality}, the author establishes the super Schur-Weyl reciprocity between quantum superalgebra and cyclotomic Hecke algebra , which is a super analogue of the Schur-Weyl reciprocity proved independently by Sakamoto and Shoji \cite{SS} and Hu \cite{Hu}.

Inspired by the aforementioned works, the aim of this paper is to  introduce  the cyclotomic $q$-Schur superalgebra and study its properties along the line of \cite{DJM}. More precisely, we first define the ``permutation supermodule" of the cyclotomic Hecke algebra (see Definition~\ref{Def:Supermodule}) and describe its basis explicitely (see Theorem~\ref{Them:M-basis}). Then the cyclotomic $q$-Schur superalgebra is defined as the endomorphism (super)algebra of the direct sum of all ``permutation supermodules" (see Definition~\ref{Def:Cyclotomi-Schur}), which is a natural generalization of  Dipper-James-Mathas' cyclotomic $q$-Schur algebra and Du-Rui's  quantum Schur superalgebra.

 Further we show that the cyclotomic $q$-Schur superalgebra enjoies many properties in common with the cyclotomic $q$-Schur algebra and the quantum Schur superalgebra.  In particular, we show that the cyclotomic $q$-Schur superalgebra is a cellular (super)algebras (see Theorem~\ref{Them:Cellular-algebra}), which enables us to get the classification of (ordinary) irreducible representations and prove that the cyclotomic $q$-Schur superalgebra is  quasi-hereditary by applying Graham and Lehrer's theory on cellular algebras. We also establish the double centralizer property between the cyclotomic Hecke algebra  and the cyclotomic $q$-Schur superalgebra (see Theorem~\ref{Them:Schur-Weyl}).

Let us remark that there are various cyclotomic $q$-Schur algebras: Ariki \cite{A} introduced a cyclotomic $q$-Schur algebra as the endomorphism algebra of the action of cyclotomic Hecke algebra on a $q$-tensor space and showed that this cyclotomic $q$-Schur algebra is a quotient of a quantum algebra; Lin and Rui \cite{LR} defined a special cyclotomic $q$-Schur algebra via the affine tensor space and showed it is a quotient of an affine quantum group; Sawada and Shoji \cite{SS-Schur} defined the cyclotomic $q$-Schur algebra as the endomorphism algebra of the action of the modified Ariki-Koike algebra on a $q$-tensor space; Deng et.\,al.\,\cite{DDY} introduced the slim cyclotomic $q$-Schur algebra and constructed its basis via the symmetric polynomials in Jucys-Murphy elements of the cyclotomic Hecke algebra.  It may be interesting to formulate the super-versions of the aforementioned cyclotomic $q$-Schur algebras.

This paper is organized as follows. After a brief review of the definition of the cyclotomic Hecke algebra and its  cellular basis in Section~\ref{Sec:Cyclotomic-Hecke-Algebras}, we begin to fix the combinatoric notations and  give some useful facts on semistandard tableaux in Section~\ref{Sec:super-tableaux}. The permutation supermodule of the cyclotomic Hecke algebra is defined and its basis is described explicitly in Section~\ref{Sec:Permutation-supermodules}.  Section~\ref{Sec:Cyclotomic-Schur-superalgebras} devotes to introduce the cyclotomic $q$-Schur superalgebra and to show its cellularity. We study the representations of the cyclotomic $q$-Schur superalgebra via its cellularity  and  establish the Schur-Weyl duality between the cyclotomic Hecke algebra and the cyclotomic  $q$-Schur superalgebra in the last section.

 Throughout the paper, $m,n$ are fixed positive integers not less than one;  $k,\ell$ are non-negative integers with $k+\ell>0$, $k_1, \ldots, k_m$, $\ell_1, \ldots, \ell_m$ are non-negative integers such that  $\sum_{i=1}^mk_i=k$ and $\sum_{i=1}^m\ell_i=\ell$. We denote by  $\bk=(k_1, \ldots, k_m)$, $\bl=(\ell_1, \ldots, \ell_m)$ and write $\bk|\bl=(k_1|\ell_1, \ldots, k_m|\ell_m)$.

\noindent\textbf{Acknowledgements.} Part of this work was carried out while the author was visiting the Chern Institute of Mathematics (CIM) in Nankai University during Summer of 2019 and he wish to thank the institute for the hospitality during his visit.  The author is very grateful to the anonymous referee  for her/his  numerous useful comments and corrections. This work was supported the National Natural Science Foundation of China (Grant No. 11871107).
\section{Cyclotomic Hecke algebras}\label{Sec:Cyclotomic-Hecke-Algebras}

In this section, we recall the definition of the cyclotomic Hecke algebra and review briefly its cellular basis and some related facts. The main references for this section are \cite{AK} and \cite{DJM}.

\begin{definition}[\cite{AK,BM:cyc}]\label{Def:CHA}Let $R$ be an integral domain with unitary and let $q, Q_1, \ldots, Q_m$ be elements of $R$ with $q$ invertible. The \textit{cyclotomic Hecke algebra} $H$ is the unital associative $R$-algebra  generated by
$T_0,T_1,\dots,T_{n-1}$ and subject to relations
\begin{align*}&(T_0-Q_1)\dots(T_0-Q_m)=0,&&\\
&T_0T_1T_0T_1=T_1T_0T_1T_0,&&\\
&T_i^2=(q-1)T_i+q &&\text{ for }1\leq i<n,\\
&T_iT_j=T_jT_i &&\text{ for }|i-j|>2,\\
 &T_iT_{i+1}T_i=T_{i+1}T_{i}T_{i+1} &&\text{ for }1\leq i<n-1.\end{align*}
\end{definition}

Remark that if  $\xi\in R$ is a primitive $m$-th root of $1$,  $q=1$, and $Q_i=\xi^i$ for $i=1, \ldots, m$. Then $H$ is isomorphic to the group algebra $R\mathfrak{S}_n \ltimes(\mathbb{Z}/m\mathbb{Z})^n$ of the wreath product of the cyclic group $\mathbb{Z}/m\mathbb{Z}$ with the symmetric group $\mathfrak{S}_n$ on $n$ letters.

For a finite set $X$, let $\mathfrak{S}_X$be the group of all permutations of $X$. Thus $\mathfrak{S}_n=\mathfrak{S}_{\{1,2,\ldots, n\}}$ with simple transposes $s_i=(i,i+1)$, $i=1, \ldots, n-1$. Let  $s_{i_1}s_{i_2}\cdots s_{i_k}$ be a reduced expression for $w\in \mathfrak{S}_n$. Then $T_{w}:=T_{i_1}T_{i_2}\cdots T_{i_k}$ is independent of the choice of reduced expression of $w$ and  $\{T_{w}|w\in \mathfrak{S}_n\}$ is linear basis of the Iwahori-Hecke algebra associated to $\mathfrak{S}_n$, which is the subalgebra of $H$ generated by $T_1, \ldots, T_{n-1}$.

 Following \cite{DJM}, we define inductively the \textit{Jucys-Murphy elements} of $H$ as following
\begin{equation*}\label{Equ:JM-def}
J_1\!:=T_0 \quad \text{ and }\quad J_{i+1}\!:=q^{-1}T_{i}J_iT_{i}, \quad i=1, \cdots, n-1,
  \end{equation*}
that is, $J_i=q^{1-i}T_{i-1}\ldots T_1T_0T_1\ldots T_{i-1}$ for $i=1, \ldots, n$.

It is easy to show the following facts.

\begin{lemma}[\protect{\cite[(2.1)]{DJM}}]\label{Lemm:J-property}Assume that $1\le i<n$ and $1\le j\le n$. Then \begin{enumerate}\setlength{\itemsep}{1\jot}
\item  $J_iJ_j=J_jJ_i$.
\item $T_iJ_j=J_jT_i$ if $i\neq j-1, j$.
 \item $T_i(J_iJ_{i+1})=(J_iJ_{i+1})T_i$ and $T_i(J_i+J_{i+1})=(J_i+J_{i+1})T_i$.
   \item If $a\in R$ and $i\neq j$ then $T_i$ commutes with $(J_1-a)(J_2-a)\cdots(J_j-a)$.
   \end{enumerate}
\end{lemma}

The following fact gives a basis for $H$.

\begin{theorem}[\protect{\cite[Theorem~3.10]{AK}}]\label{Them:Basis} The algebra $H$ is a free $R$-module with basis
\begin{equation*}
  \{J_1^{c_1}J_2^{c_2}\cdots J_{n}^{c_n}T_{w}|w\in \mathfrak{S}_n, 0\leq c_i<m \text{ for }i=1,\ldots, n\}.
\end{equation*}
  In particular, $H$ is free $R$-module of rank $H=m^nn!$.
\end{theorem}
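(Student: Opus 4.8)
The plan is to prove the statement by the usual sandwich: first show that the proposed set $B=\{J_1^{c_1}\cdots J_n^{c_n}T_w : 0\le c_i<m,\ w\in\mathfrak{S}_n\}$, which has $m^n n!$ elements, spans $H$ over $R$ (so $\dim_R H\le m^n n!$), and then realise the $m^n n!$ monomials as linearly independent operators, upgrading the spanning set to a free basis. Two computational facts drive everything and should be recorded first, both derivable directly from the relations of Definition~\ref{Def:CHA}: the Jucys--Murphy elements commute pairwise, $J_iJ_j=J_jJ_i$, and they satisfy the straightening relations $T_iJ_i=J_{i+1}T_i-(q-1)J_{i+1}$ and $J_iT_i=T_iJ_{i+1}-(q-1)J_{i+1}$, together with $T_iJ_j=J_jT_i$ whenever $j\neq i,i+1$. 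The remaining input is that $J_1=T_0$ satisfies the degree-$m$ cyclotomic relation $(J_1-Q_1)\cdots(J_1-Q_m)=0$.

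For the spanning I would induct on $n$, using the subalgebra $H_{m,n-1}$ of $H$ generated by $T_0,\dots,T_{n-2}$ and the coset decomposition supplied by the given basis of $H_n(q)$: the distinguished representatives of $\mathfrak{S}_{n-1}\backslash\mathfrak{S}_n$ give $\{T_w:w\in\mathfrak{S}_n\}=\{(T_{n-1}T_{n-2}\cdots T_j)\,T_{w'} : 1\le j\le n,\ w'\in\mathfrak{S}_{n-1}\}$, with lengths adding. The goal of the inductive step is the identity $H=\sum_{0\le c<m}\sum_{1\le j\le n} J_n^{\,c}\,(T_{n-1}\cdots T_j)\,H_{m,n-1}$, which I would prove by checking that the right-hand side contains $1$ and is stable under left multiplication by each generator $T_0,\dots,T_{n-1}$. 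Here the straightening relations push the $J$'s to the left and the $T_i$'s to the right, while the cyclotomic relation, propagated through the tower, caps every exponent below $m$. Unwinding the induction and performing one final round of straightening -- a change of basis that is unitriangular with respect to the order given by the total degree $\sum_i c_i$ and the length $\ell(w)$ -- rewrites the iterated monomials in the literal form of $B$.

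For independence I would construct a module on which the candidate basis acts triangularly. Concretely, let $\tilde H$ be the free $R$-module with basis $\{b_{\mathbf c,w}\}$ indexed by $B$, and define operators $\tau_0,\dots,\tau_{n-1}$ on $\tilde H$ that mimic left multiplication by $T_0,\dots,T_{n-1}$, using the straightening and exponent-reduction rules from the spanning step to re-expand each $\tau_i\,b_{\mathbf c,w}$ in the given basis. The crucial point is to verify that $\tau_0,\dots,\tau_{n-1}$ satisfy the relations of Definition~\ref{Def:CHA}, which makes $\tilde H$ an $H$-module. Acting by $J_1^{c_1}\cdots J_n^{c_n}T_w$ on the distinguished vector $b_{0,1}$ then returns $b_{\mathbf c,w}$ plus a combination of basis vectors strictly lower in the chosen monomial order; this triangularity forces the $m^n n!$ elements of $B$ to be $R$-linearly independent. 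Combined with the spanning bound, it follows that $\dim_R H=m^n n!$ and that $B$ is a free $R$-basis.

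The main obstacle is twofold. First, the exponent cap must be shown to be \emph{internal} to the tower: unlike $J_1$, the higher $J_i$ need not satisfy the cyclotomic polynomial on the nose, so one cannot invoke a single degree-$m$ relation, and the reduction $J_n^{\,m}\in\sum_{c<m}\sum_j J_n^{\,c}(T_{n-1}\cdots T_j)\,H_{m,n-1}$ has to be extracted from the inductive structure together with the straightening relations -- this is the most delicate computation in the spanning step. Second, $R$-linear independence over an arbitrary integral domain does not follow from spanning; establishing it requires verifying that the operators $\tau_0,\dots,\tau_{n-1}$ on $\tilde H$ genuinely satisfy all the relations of Definition~\ref{Def:CHA}, and it is this well-definedness check, rather than the triangularity itself, that carries the real weight.
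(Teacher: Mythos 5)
The paper offers no proof of this theorem at all---it is quoted from Ariki--Koike \cite{AK}---and your outline reconstructs essentially their standard argument: spanning via the tower $H_{m,n-1}\subset H_{m,n}$ together with the straightening relations $T_iJ_i=J_{i+1}T_i-(q-1)J_{i+1}$, $J_iT_i=T_iJ_{i+1}-(q-1)J_{i+1}$, and linear independence via a representation on the free $R$-module indexed by the claimed basis, where verifying that the operators $\tau_0,\dots,\tau_{n-1}$ satisfy the defining relations is indeed where the weight lies. Your two flagged obstacles are exactly the right ones (for $i>1$ the element $J_i$ satisfies no degree-$m$ polynomial, so the exponent cap must come from the inductive structure, and independence over an arbitrary integral domain genuinely requires the module construction rather than a specialization or dimension count); note only that the bound $0\le c_i<r$ in the statement is a typo for $0\le c_i<m$, which your sketch silently and correctly assumes.
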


Now let $*$ be the anti-automorphism of $H$ determined by $T_i^*=T_i$ for $i=0,1,\ldots, n-1$. Then $T_w^*=T_{w^{-1}}$ for  $w\in \mathfrak{S}_n$ and $J_i^*=J_i$ for $i=1, 2, \ldots,n$.  Thus Theorem~\ref{Them:Basis} shows that $H$ is a free $R$-module with basis
\begin{equation}\label{Equ:Standard-basis}
  \{T_{w}J_1^{c_1}J_2^{c_2}\cdots J_{n}^{c_n}|w\in \mathfrak{S}_n, 0\leq c_i<m \text{ for }i=1,\ldots, n\}.
\end{equation}
Then for any right $H$-module $M$, the anti-automorphism $*$ enables us to yield a left $H$-module $M^*$ such that $M^*=M$ as free $R$-modules and $hm:=mh^*$ for $h\in H,m\in M^*$.

\begin{point}{}*
Recall that a \textit{composition}  (resp. \textit{partition}) $\gl=(\gl_1, \gl_2, \ldots)$ of $n$, denote $\gl\models n$ (resp. $\gl\vdash n$), is a sequence (resp. weakly decreasing sequence) of  non-negative integers such that $|\gl|=\sum_{i\geq1}\gl_i=n$ and we write $\ell(\gl)$ the length of $\gl$, i.e., the number of nonzero parts of $\gl$.
The {\it Young diagram}  of a composition $\lambda=(\lambda_1, \lambda_2,\ldots)$ is the set
\begin{equation*}
 \lambda:= \{(i,j)|1\leq i\text{ and } 1\leq j\leq \lambda_i\},
\end{equation*}
which regards as a collection of boxes arranged in left-justified row with $\lambda_i$ boxes in the $i$-th row. The conjugate of a partition $\gl$ is the partition $\gl^{'}$ whose Young diagram is the transpose of the Young diagram of $\gl$.

 A \textit{multicomposition} (resp. {\it multipartition}) of $n$ is an ordered $m$-tuple $\bgl=(\gl^{(1)}; \ldots; \gl^{(m)})$ of compositions (resp. partitions) $\lambda^{(i)}$ such that
$n=\sum_{i=1}^m|\lambda^{i}|$. We call $\gl^{(i)}$ the $i$-th component of $\bgl$ and denote by  $\mathscr{C}_{m,n}$ (resp. $\mpn$)  the set of all multicompositions (resp. multipartitions) of $n$. Then $\mpn$ is a poset under dominance $\unrhd$, where
\begin{equation*}
\bgl\unrhd\bmu\Longleftrightarrow\displaystyle\sum_{a=1}^{i-1}|\gl^{(a)}|+\sum_{b=1}^j\gl_b^{(i)}\geq
\sum_{a=1}^{i-1}|\mu^{(a)}|+\sum_{b=1}^j\mu_b^{(i)}\quad
\text{ for all }1\leq i\leq m \text{ and }j\geq 1.\end{equation*}
 We write $\bgl\rhd\bmu$
if $\bgl\unrhd\bmu$ and $\bgl\neq \bmu$.

The \textit{Young diagram} of
a multicomposition $\bgl$ is the ordered $m$-tuple of the diagrams of its components, or equivalently, the set
\begin{equation*}
  \bgl:=\{(i,j,c)\in\bz_{>0}\times\bz_{>0}\times \{1, \dots, m\}|1\le j\le\lambda^{(c)}_i\}.
\end{equation*}
We may and will identify $\bgl$ with its Young diagram.
\end{point}

\begin{point}{}*
For $\bgl\in\mpn$, a {\it $\bgl$-tableau} $\ft=(\ft^{(1)}; \dots; \ft^{(m)})$  is obtained  from $\bgl$  by inserting the number $1, 2, \ldots, n$ into its boxes. We may and will identify a tableau $\ft$ with an $m$-tuple of tableaux
$\ft=(\ft^{(1)}; \dots; \ft^{(m)})$, where  the {\it$c$-component} $\ft^{(c)}$ of $\ft$  is a $\lambda^{(c)}$-tableau for $c=1, \cdots, m$.  We  write $\text{Shape}(\ft)=\bgl$ if $\ft$ is a $\bgl$-tableau.

 A $\bgl$-tableau $\ft=(\ft^{(1)}; \dots; \ft^{(m)})$ is {\it row} (resp. {\it column}) {\it standard} if the entries of $\ft^{(c)}$ are strictly increasing in each row (resp. column)  for $c=1, \ldots, m$.  A $\bgl$-tableau is {\it  standard} if it is both row standard and column standard and we denote by $\std(\bgl)$ the set of all standard $\bgl$-tableaux. For example,  let $\ft^{\bgl}$ (resp. $\ft_{\bgl}$) be the
$\bgl$-tableau with the numbers $1,2,\dots,n$ entered in
order first along the rows (resp. columns) of the first component, and then along the rows (resp. columns) of the second component, and so on. Then $\ft^{\bgl}$ and $\ft_{\bgl}$ are standard $\bgl$-tableaux.

For a composition $\lambda^{(i)}=(\lambda_1^{(i)}, \lambda^{(i)}_2,\ldots)$,  the Young subgroup associated to $\lambda^{(i)}$ is the subgroup   \begin{equation*}\mathfrak{S}_{\lambda^{(i)}}=\mathfrak{S}_{\{1, \ldots, \lambda_1^{(i)}\}}\times \mathfrak{S}_{\{\lambda_1^{(i)}+1, \ldots, \lambda^{(i)}_2\}}\times\cdots\end{equation*} of $\mathfrak{S}_{\{1, \ldots, |\lambda^{(i)}|\}}$. Similarly, the Young subgroup associated to  $\bgl=(\gl^{(1)};\ldots;\gl^{(m)})\in\mathscr{C}_{m,n}$ is the subgroup   $\mathfrak{S}_{\bgl}=\mathfrak{S}_{\gl^{(1)}}\times\mathfrak{S}_{\gl^{(2)}}\times \cdots\times \mathfrak{S}_{\gl^{(m)}}$ of $\mathfrak{S}_n$.

Now the symmetric  group $\mathfrak{S}_n$ acts from the right on the set of $\bgl$-tableaux by permuting its entries in each tableau.  Clearly the row stabilizer of $\ft^{\bgl}$ is the Young subgroup of $\mathfrak{S}_{\bgl}$ of $\mathfrak{S}_n$. For a row standard $\bgl$-tableau $\fs$, let $d(\fs)$ be the element of $\mathfrak{S}_n$ such that $\fs=\ft^{\bgl}d(\fs)$. Then $d(\fs)$ is a distinguished  right coset representative of $\mathfrak{S}_{\bgl}$ in $\mathfrak{S}_n$ and there is a correspondence between the set of row standard $\bgl$-tableaux and the set of right coset representatives of $\mathfrak{S}_{\bgl}$ in $\mathfrak{S}_n$.\end{point}

\begin{example}Let $\bgl=((3,2);(1^2);(2,1))$. Then the Young diagram of $\bgl$ is
 \begin{eqnarray*}&&
  \protect{\bgl=\left(\,\Tritab(&&\cr&|\cr|&\cr)\right)}.\end{eqnarray*}
  The standard tableaux $\ft^{\bgl}$ and $\ft_{\bgl}$ are
   \begin{eqnarray*}&& \protect{\ft^{\bgl}=\left(\,\Tritab(1&2&3\cr4&5|6\cr7|8&9\cr10)\right),}\\&& \protect{\ft_{\bgl}=\left(\,\Tritab(1&3&5\cr2&4|6\cr7|8&10\cr9)\right)}.\end{eqnarray*}
Given a standard $\bgl$-tableau
$\protect{\fs=\left(\,\Tritab(2&5&7\cr3&8|1\cr4|6&10\cr9)\right)}$, we have
\begin{eqnarray*}&&\mathfrak{S}_{\bgl}=\mathfrak{S}_{\{1,2,3\}}\times \mathfrak{S}_{\{4,5\}}\times\mathfrak{S}_{\{6\}}
  \times\mathfrak{S}_{\{7\}}\times\mathfrak{S}_{\{8,9\}}\times\mathfrak{S}_{\{10\}},\\
&&d(\fs)=(1\,2\,5\,8\,6)(3\,7\,4)(9\,10).\end{eqnarray*}
\end{example}

\begin{definition}\label{Def:x-y-lambda}For $\bgl\in \mpn$, let $a_i=|\gl^{(1)}|+\cdots+|\gl^{(i-1)}|$ for $1\leq i\leq m$ with $a_1=0$.  We define  $m_{\bgl}=u^+_{\bgl}x_{\bgl}$ and $n_{\bgl}=u^-_{\bgl}y_{\bgl}$, where
   \begin{eqnarray*}
 &&  x_{\bgl}=\sum_{w\in\mathfrak{S}_{\bgl}}T_w,\qquad\qquad\qquad u_{\bgl}^+=\prod_{i=2}^{m}\prod_{j=1}^{a_i}(J_j-Q_i);\\
&& y_{\bgl}=\sum_{w\in\mathfrak{S}_{\bgl}}(-q)^{-\ell(w)}T_w,\qquad
  u_{\bgl}^-=\prod_{i=2}^{m}\prod_{j=1}^{a_i}(J_j-Q_{m-i+1}).
  \end{eqnarray*}
 For $\fs,\ft\in\std(\bgl)$, we define
 \begin{eqnarray*}
 m_{\fs\ft}=T^*_{d(\fs)}m_{\bgl}T_{d(\ft)},&\qquad&
 n_{\fs\ft}=\!(\!-\!q\!)^{-\!\ell(d(\fs))\!-\!\ell(d(\ft))}T^*_{d(\fs)}n_{\bgl}T_{d(\ft)}.
 \end{eqnarray*}
 \end{definition}

Lemma~\ref{Lemm:J-property} shows $m_{\bgl}=u^+_{\bgl}x_{\bgl}=x_{\bgl}u_{\bgl}^+$ and $n_{\bgl}=u^-_{\bgl}y_{\bgl}=y_{\bgl}u^-_{\bgl}$. Hence $m_{\bgl}^*=m_{\bgl}$ and $n_{\bgl}^*=n_{\bgl}$. Note that $m_{\bgl}=m_{\ft^{\bgl}\ft^{\bgl}}$, $m_{\fs\ft}^*=m_{\ft\fs}$, and $n_{\fs\ft}^*=n_{\ft\fs}$.

\begin{remark}\label{Remark:mn-def} Clearly  $x_{\bgl}$, $y_{\bgl}$, $u^{\pm}_{\bgl}$ are also well-defined when $\bgl$ is a multicomposition. Thus  $m_{\fs\ft}$ and $n_{\fs\ft}$ are well-defined for row standard $\bgl$-tableaux $\fs,\ft$.\end{remark}

\begin{theorem}[\protect{\cite[Theorem~3.26]{DJM}, \cite[(3.1)]{Mathas}}]\label{Them:mn-basis}
The algebra $H$ is a free $R$-module with cellular basis  $\mathcal{M}=\{m_{\fs\ft}|\fs,\ft\in\mathrm{std}(\bgl)
  \text{ for }\bgl\vdash n\}$ and $
 \mathcal{N}=\{n_{\fs\ft}|\fs,\ft\in\mathrm{std}(\bgl)\text{ for }\bgl\vdash n\}$.
\end{theorem}

\section{$(\bk,\bl)$-semistandard tableaux}\label{Sec:super-tableaux}
In this section, we define the $(\bk,\bl)$-semistandard tableaux associated to ($(\bk,\bl)$-hook) multipartitions  of $n$, which play an important role in later sections. Let us remark that  we are in the classical settings when $\bl=\mathbf{0}$.

\begin{point}{}* Let $\mathbb{N}$ be the set of non-negative integers. Denote by $\mathcal{C}(k+\ell, n)$ and by $\mathcal{P}(k+\ell, n)$ the set of compositions and partitions  of $n$ with $k+\ell$ parts respectively. For $n_1, n_2\in \mathbb{N}$, $\lambda\in \mathcal{C}(k,n_1)$ and $\mu\in \mathcal{C}(\ell,n_2)$, we set
\begin{eqnarray*}
  &&\lambda\!\vee\!\mu=(\lambda;\mu)\in \mathcal{C}(k+\ell, n_1+n_2).
\end{eqnarray*}
Every element in $\mathcal{C}(k+\ell, n)$ has the form $\lambda\vee\mu$ for some $\lambda\in\mathcal{C}(k, n_1)$, $\mu\in\mathcal{C}(\ell, n_2)$ with $n_1+n_2=n$.
Furthermore,  we let \begin{eqnarray*}
         &&\mathcal{C}(k|\ell;n)=\left\{\lambda|\mu: \begin{array}{l}\exists n_1,n_2\in\mathbb{N}, \lambda\in \mathcal{C}(k,n_1),\mu\in \mathcal{C}(\ell,n_2), \lambda\vee \mu\in  \mathcal{C}(k+\ell, n)\end{array}\right\},\\
&&\mathcal{P}(k|\ell;n)=\{\lambda|\mu\in \mathcal{C}(k|\ell,n):  \lambda,\mu\text{ are partitions}\}.
       \end{eqnarray*}
Clearly the map $\lambda|\mu\mapsto \lambda\vee\mu$ is a bijection between $\mathcal{C}(k|\ell,n)$ and $\mathcal{C}(k+\ell,n)$.

Following \cite[Definition~2.3]{B-Regev}, a partition $\gl=(\gl_1, \gl_2, \cdots)\vdash n$ is said to be a \textit{$(k, \ell)$-hook partition} of $n$ if $\gl_{k+1}\leq \ell$. We let  $H(k|\ell;n)$ denote the set of all $(k,\ell)$-hook partitions of $n$
 and let
\begin{equation*}
 \mathcal{P}^+(k|\ell;n)=\left\{\mu|\nu\in  \mathcal{P}(k|\ell;n): \mu_k\geq \ell(\nu)\right\}.
\end{equation*}
\end{point}

The following fact will be useful.

\begin{lemma}[\cite{BL}]Keeping notations as above. Then there is a bijection between $H(k|\ell;n)$ and $\mathcal{P}^{+}(k|\ell;n)$ given by $\gl\mapsto \gl_{\sharp}|\gl_{\ast}$, where
\begin{eqnarray*}
\gl_{\sharp}=(\gl_1,\ldots, \gl_k), &\quad\quad&\gl_{\ast}=(\gl_{k+1},\gl_{k+2},\ldots)^{'}.
\end{eqnarray*}
\label{Lemm:hook=bi-partitions}\end{lemma}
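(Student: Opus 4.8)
The plan is to write down an explicit inverse and verify that both maps respect the defining constraints; the whole argument is elementary once one records the identity $\ell(\nu)=\nu^{*}_1$. I would define the candidate inverse $\Phi\colon \mathcal{P}^{+}(k|\ell;n)\to H(k|\ell;n)$ by \emph{stacking}: send $\mu|\nu$ to the sequence $\gl=(\mu_1,\ldots,\mu_k,\nu^{*}_1,\nu^{*}_2,\ldots)$ obtained by placing the $k$ parts of $\mu$ on top of the rows of the conjugate partition $\nu^{*}$.

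First I would check that the stated map $\gl\mapsto\gl_\sharp|\gl_\ast$ lands in $\mathcal{P}^{+}(k|\ell;n)$. Writing $\rho=(\gl_{k+1},\gl_{k+2},\ldots)$, the hook condition $\gl_{k+1}\leq\ell$ says every part of $\rho$ is at most $\ell$, so its conjugate $\gl_\ast=\rho^{*}$ has length $\ell(\gl_\ast)=\rho_1=\gl_{k+1}\leq\ell$; thus $\gl_\ast$ is a partition of length at most $\ell$, while $\gl_\sharp=(\gl_1,\ldots,\gl_k)$ has $k$ parts, giving $\gl_\sharp|\gl_\ast\in\mathcal{P}(k|\ell;n)$. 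The extra condition is then automatic: $\ell(\gl_\ast)=\gl_{k+1}\leq\gl_k=(\gl_\sharp)_k$ because $\gl$ is weakly decreasing, which is precisely the inequality defining $\mathcal{P}^{+}(k|\ell;n)$.

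The one genuine point---and the main obstacle---is to see that $\Phi(\mu|\nu)$ really is a partition, i.e.\ that stacking produces a weakly decreasing sequence. Every inequality inside the top block or inside $\nu^{*}$ is inherited from $\mu$ and $\nu$ being partitions, so the only thing to check is the junction $\mu_k\geq\nu^{*}_1$. Here I would use that $\nu^{*}_1=\#\{i\mid\nu_i\geq 1\}=\ell(\nu)$ is exactly the number of nonzero parts of $\nu$, so the junction inequality reads $\mu_k\geq\ell(\nu)$---which is exactly the condition cutting out $\mathcal{P}^{+}(k|\ell;n)$. Hence $\gl=\Phi(\mu|\nu)$ is a partition; it satisfies $\gl_{k+1}=\nu^{*}_1=\ell(\nu)\leq\ell$, so $\gl\in H(k|\ell;n)$, and $|\gl|=|\mu|+|\nu^{*}|=|\mu|+|\nu|=n$ since conjugation preserves size.

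Finally I would verify that the two composites are the identity, which follows at once from the involutivity $\rho^{**}=\rho$ of conjugation: starting from $\gl$, the first $k$ rows are returned verbatim and the tail is reconstructed as $(\gl_\ast)^{*}=\rho^{**}=\rho=(\gl_{k+1},\gl_{k+2},\ldots)$; starting from $\mu|\nu$, the top block returns $\mu$ and the conjugate of $\nu^{*}$ returns $\nu$. This exhibits $\gl\mapsto\gl_\sharp|\gl_\ast$ and $\Phi$ as mutually inverse bijections, completing the proof; no input beyond the definitions of this section and the conjugation involution is required.
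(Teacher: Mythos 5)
Your proof is correct and complete. Note that the paper itself offers no proof of this lemma---it is quoted from the reference [BL] (Benkart--Lee)---so your argument supplies the omitted verification, and it is exactly the standard one: the junction inequality $\mu_k\geq\nu^{*}_1=\ell(\nu)$ is indeed the only nontrivial point, and you correctly identify it as the condition cutting out $\mathcal{P}^{+}(k|\ell;n)$, with the hook condition $\gl_{k+1}\leq\ell$ corresponding to $\ell(\gl_\ast)\leq\ell$ (so that $\gl_\ast$, padded with zeros, has $\ell$ parts) and involutivity of conjugation giving the two identities.
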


\begin{point}{}*\label{Subsec:linearly-order}Let $\mathbf{x}, \mathbf{y}$ be sets of $k, \ell$ symbols respectively as follows
\begin{eqnarray*}
  \mathbf{x}^{(i)}&=&\left\{x^{(i)}_1, \ldots, x_{k_i}^{(i)}\right\},\quad 1\leq i\leq m;\\
  \mathbf{y}^{(i)}&=&\left\{y^{(i)}_1, \ldots, y_{\ell_i}^{(i)}\right\},\quad 1\leq i\leq m;\\
    \mathbf{x}&=&\mathbf{x}^{(1)}\cup\cdots\cup \mathbf{x}^{(m)};\\
    \mathbf{y}&=&\mathbf{y}^{(1)}\cup\cdots\cup \mathbf{y}^{(m)}.
\end{eqnarray*}
We say that the symbols in $\mathbf{x}^{(i)}\cup \mathbf{y}^{(i)}$ are  \textit{of color $i$} and
define a linearly order on $\bx\cup\by$: \begin{eqnarray*}
&&x_{*}^{(i)}<y_{*}^{(i)}<x_{*}^{(i+1)}\text{ for }1\leq i<m;\\
&&x_{a}^{(i)}<x_{b}^{(j)}\text{ if and only if } i<j\text{ or }i=j\text{ and }a<b;\\
 &&  y_{a}^{(i)}<y_{b}^{(j)}\text{ if and only if }i<j\text{ or }i=j\text{ and }a<b.                                                          \end{eqnarray*}
 For simplicity we write $\bx=\{x_1, \ldots, x_k\}$ and $\by=\{y_1, \ldots, y_k\}$ when $m=1$.
\end{point}

 Recall that a {\it skew diagram} is a diagram obtained by removing a smaller Young diagram from a large one that contains it. Thus a Young diagram can be viewed as a special case of a skew Young diagram. In other words, if $\lambda=(\alpha_1, \alpha_2,\ldots), \beta=(\beta_1, \beta_2,\ldots)$ are partitions with $\ell(\lambda)\leq \ell(\beta)$ and $\alpha_i\leq \beta_i$ for $i=1, \ldots, \ell(\alpha)$, then the set theoretic difference $\beta\backslash\alpha$ of the corresponding diagrams forms a so-called {\it skew diagram}.

 A \textit{$(k,\ell)$-skew tableau} $\mathrm{T}$ of shape  $\beta\backslash\alpha$ is a tableau obtained from the skew Young diagram $\beta\backslash\alpha$ by filling the boxes with elements of $\mathbf{x}\cup\mathbf{y}$, allowing repetitions. By a  \textit{$(k,\ell)$-tableau} $\mathrm{T}$ of shape $\lambda$, we means a tableau obtained from the Young diagram $\lambda$ by filling the boxes with elements $\mathbf{x}\cup\mathbf{y}$, allowing repetitions.

 \begin{definition} We say a  $(k,\ell)$-tableau $\mathrm{T}$ is \textit{$(k,\ell)$-semistandard} if
\begin{enumerate}
  \item[(a)] The $\mathbf{x}$-part $\mathrm{T}_\mathbf{x}$ (i.e.,  the boxes filled with symbols $\mathbf{x}$) of $\mathrm{T}$ is a tableau (thus its $\mathbf{y}$-part $\mathrm{T}_{\mathbf{y}}$ is a skew tableau);
  \item[(b)] the entries of $\mathrm{T}_\mathbf{x}$ are nondecreasing in rows, strictly increasing in columns, that is, $\mathrm{T}_\mathbf{x}$ is a \textit{row-semistandard tableau};
  \item[(c)] the entries of $\mathrm{T}_\mathbf{y}$ are nondecreasing in columns, strictly increasing in rows, that is, $\mathrm{T}_\mathbf{y}$ is a \textit{column-semistandard skew tableau}.
\end{enumerate}
\end{definition}

It is known that  a $\lambda$-tableau can be a $(k,\ell)$-semistandard tableau if and only if $\lambda$ is $(k,\ell)$-hook partition (see \cite[\S4.2]{B-Regev} or  \cite[Lemma~4.2]{BKK}).

\begin{example}For $\lambda=(3,3,2,2,1)\in H(2|2;11)$, we let
\begin{equation*}\mathrm{T}=\diagram(x_1&x_1&x_1\cr x_2&y_1&y_2\cr y_1&y_2\cr y_1&y_2\cr y_1)   \qquad \text{ and }\qquad \mathrm{S}=\diagram(x_1&x_1&x_1\cr x_2&y_1&x_2\cr y_1&y_2\cr y_1&y_2\cr y_1).\end{equation*}
Then $\mathrm{T}$ is a $(2,2)$-semistandard $\lambda$-tableau, while $\mathrm{S}$ is not a $(2,2)$-semistandard $\lambda$-tableau.
\end{example}

\begin{definition}
Let $\gl$ be a partition of $n$ and $\mu|\nu\in\mathcal{C}(k|\ell;n)$.  A $(k,\ell)$-tableau $\mathrm{T}$ of shape $\lambda$ is said to be \textit{of type $\mu|\nu$} if the number of the symbol $x_i$ (resp. $y_i$) occurring in $\mathrm{T}$ is $\mu_i$ (resp. $\nu_i$). We say that a
$(k,\ell)$-tableau $\mathrm{T}$ of type $\mu|\nu$ is a $(k,\ell)$-\textit{semistandard} tableau of type $\mu|\nu$ if $\mathrm{T}_{\mathbf{x}}$ is a row-semistandard tableau of type $\mu$ (thus $\mathrm{T}_{\mathbf{y}}$ is a column-semistandard skew tableau of type $\nu$).
\end{definition}
We let $\std(\gl,\mu|\nu)$ be the set of all $(k,\ell)$-semistandard $\gl$-tableaux of type $\mu|\nu$. Note that if $\nu=0$ then a $(k,\ell)$-semistandard $\lambda$-tableau $S$ of type $\mu|\nu$ is exactly a semistandard $\gl$-tableau of type $\mu$ filling elements in $\bx$ in the classical setting. Moreover, if $\mathrm{T}\in \std(\gl,\mu|\nu)$ then the subtableau obtained by removing the top $k$ rows from $\mathrm{T}$ is a column-semistandard tableau filling elements in $\by$. Clearly, a $(k,\ell)$-semistandard $\lambda$-tableau is a $(k,\ell)$-semistandard $\lambda$-tableau of type $\mu|\nu$ for some $\mu|\nu\in\mathcal{C}(k|\ell;n)$.

Note that if $\gl\in H(k|\ell;n)$, then there is a unique $(k,\ell)$-semistandard $\gl$-tableau $\mathrm{T}^{\gl}$ of type $\gl_{\sharp}|\gl_{\ast}$. Furthermore, we have the
following fact, which is well-known (see \cite[Theorem~2]{Serg} or  \cite[Lemma~4.2]{DR}).

\begin{lemma}\label{Lemm:Std-type}For a partition $\lambda$ of $n$, $\std(\gl,\mu|\nu)\neq\emptyset$ for some $\mu|\nu\in \mathcal{C}(k|\ell;n)$ if and only if $\gl\in H(k|\ell;n)$.
\end{lemma}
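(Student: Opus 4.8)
The plan is to reduce the statement to the single--partition case already recorded in the excerpt, namely that $\std(\gl,\mu|\nu)\neq\emptyset$ for some $\mu|\nu\in\mathcal{C}(k|\ell;n)$ if and only if $\gl\in H(k|\ell;n)$, and then to reassemble the answer across the $m$ colors. The point is that both sides of the claimed equivalence decouple componentwise: by definition a $(\bk,\bl)$-semistandard $\bgl$-tableau is an $m$-tuple whose $i$-th entry is a $(k_i,\ell_i)$-semistandard $\gl^{(i)}$-tableau, and membership $\bgl\in H(\bk|\bl;n)$ is exactly the requirement $\gl^{(i)}\in H(k_i|\ell_i;|\gl^{(i)}|)$ for every $i$. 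Likewise a global type $\bmu|\bnu\in\mathcal{C}(\bk|\bl;n)$ restricts to the component types $\mu^{(i)}|\nu^{(i)}$ and, conversely, any choice of component types assembles to a global type, so the quantifier ``for some $\bmu|\bnu$'' matches the componentwise quantifiers ``for some $\mu^{(i)}|\nu^{(i)}$''.

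For the forward implication I would take $\mathrm{T}=(\mathrm{T}^{(1)};\ldots;\mathrm{T}^{(m)})\in\std(\bgl,\bmu|\bnu)$ and first observe that every entry of $\mathrm{T}^{(i)}$ is of color exactly $i$. Indeed, the global type counts the total number of occurrences of each symbol $x^{(a)}_b$ (resp. $y^{(a)}_b$) in $\mathrm{T}$, while the condition that $\mathrm{T}^{(i)}$ be of type $\mu^{(i)}|\nu^{(i)}$ already accounts for the full multiplicity $\mu^{(i)}_b$ (resp. $\nu^{(i)}_b$) of each color-$i$ symbol inside $\mathrm{T}^{(i)}$; hence no color-$i$ symbol occurs outside $\mathrm{T}^{(i)}$, and combined with the color-$\ge i$ constraint this forces $\mathrm{T}^{(i)}$ to be built from color-$i$ symbols alone. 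After relabelling $x^{(i)}_1<\cdots<x^{(i)}_{k_i}$ and $y^{(i)}_1<\cdots<y^{(i)}_{\ell_i}$ as the standard alphabets of sizes $k_i,\ell_i$, the tableau $\mathrm{T}^{(i)}$ is then a genuine $(k_i,\ell_i)$-semistandard $\gl^{(i)}$-tableau of type $\mu^{(i)}|\nu^{(i)}$, so $\std(\gl^{(i)},\mu^{(i)}|\nu^{(i)})\neq\emptyset$ and the single-partition result gives $\gl^{(i)}\in H(k_i|\ell_i;|\gl^{(i)}|)$ for each $i$; that is, $\bgl\in H(\bk|\bl;n)$.

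For the converse I would assume $\bgl\in H(\bk|\bl;n)$, so each $\gl^{(i)}$ is a $(k_i,\ell_i)$-hook partition. The single-partition case, in the explicit form recalled just before the lemma, supplies the unique $(k_i,\ell_i)$-semistandard $\gl^{(i)}$-tableau $\mathrm{T}^{\gl^{(i)}}$ of type $\gl^{(i)}_{\sharp}|\gl^{(i)}_{\ast}$, built solely from color-$i$ symbols. Assembling these into $\mathrm{T}^{\bgl}=(\mathrm{T}^{\gl^{(1)}};\ldots;\mathrm{T}^{\gl^{(m)}})$ yields a $(\bk,\bl)$-semistandard $\bgl$-tableau; since the $i$-th component uses only color-$i$ entries, the requirement that colors be at least $i$ holds trivially, and the global type is visibly $\bgl_{\sharp}|\bgl_{\ast}$. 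Thus $\std(\bgl,\bgl_{\sharp}|\bgl_{\ast})\neq\emptyset$, which establishes the equivalence.

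The only genuinely delicate step is the opening observation of the forward implication: one must unwind the bookkeeping of the \emph{type} together with the color-$\ge i$ condition carefully enough to conclude that each $\mathrm{T}^{(i)}$ is pure of color $i$, for this is precisely what licenses invoking the single-partition theorem with the intended parameters $(k_i,\ell_i)$ rather than with a larger alphabet. Once this decoupling is in place the rest is purely formal, the substance residing entirely in the already-cited result of \cite{Serg}.
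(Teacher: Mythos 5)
Your overall route coincides with the paper's: its entire proof is the assertion that $\mathrm{T}=(\mathrm{T}^{(1)};\ldots;\mathrm{T}^{(m)})\in\std(\bgl,\bmu|\bnu)$ for some $\bmu|\bnu\in\mathcal{C}(\bk|\bl;n)$ if and only if $\mathrm{T}^{(i)}\in\std(\gl^{(i)},\mu^{(i)}|\nu^{(i)})$ for each $i$, followed by a citation of Sergeev's single-partition theorem. Your converse paragraph (assembling the unique tableaux $\mathrm{T}^{\gl^{(i)}}$ into $\mathrm{T}^{\bgl}$ of type $\bgl_{\sharp}|\bgl_{\ast}$, with the color-$\geq i$ clause holding trivially) is correct and is exactly what the paper's equivalence amounts to in that direction.

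However, the step you yourself flag as the delicate one --- the purity claim that every entry of $\mathrm{T}^{(i)}$ has color exactly $i$ --- is contradicted by the paper's own examples: in Example~\ref{Exam:stand-sstd} the tableau $\bmu|\bnu(\fs)$ is declared $(\bk,\bl)$-semistandard of type $\bmu|\bnu$, yet its first component contains the color-$2$ symbol $x_1^{(2)}$ (likewise the tableau $\mathrm{S}$ of Example~\ref{Exam:tableau-type}). The flaw in your inference is that the type $\bmu|\bnu$ counts occurrences of each $x^{(a)}_b$ \emph{globally} across all components, and the clause ``entries of color not less than $i$'' exists precisely to permit color-$a$ symbols in components $j\leq a$; were components pure, that clause would be vacuous. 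So your exact-componentwise reading of ``$\mathrm{T}^{(i)}$ is of type $\mu^{(i)}|\nu^{(i)}$'', under which the lemma trivializes, is not the operative one (and is incompatible with the later machinery, e.g.\ the basis elements $m_{\mathrm{S}\ft}$, which require mixed-color semistandard tableaux). Under the operative notion your forward direction does not license invoking the single-partition theorem with parameters $(k_i,\ell_i)$: component $i$ draws on an alphabet with $\sum_{a\geq i}k_a$ even and $\sum_{a\geq i}\ell_a$ odd symbols, for which Berele--Regev/Sergeev semistandardness alone forces only the weaker hook condition $\gl^{(i)}\in H\bigl(\sum_{a\geq i}k_a\,\big|\sum_{a\geq i}\ell_a\bigr)$, and one must still argue --- using the shape confinement of $\mathrm{T}^{(i)}_{\bx}$ to the first $k_i$ rows and of $\mathrm{T}^{(i)}_{\by}$ to $\ell_i$ columns implicit in ``$(k_i,\ell_i)$-semistandard'' --- that $\gl^{(i)}_{k_i+1}\leq \ell_i$. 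To be fair, the paper's two-line proof elides exactly the same point by asserting the componentwise equivalence outright; but your proposal, by supplying a justification that its own source's examples falsify, has a genuine gap in the forward implication.
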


\begin{example}Let $\gl=(3,3,2,2,1)\in H(2|2;11)$. Then $(\gl_{\sharp};\gl_{\ast})=((3,3);(3,2))$ and
\begin{equation*}
  \mathrm{T}^{\gl}=\diagram(x_1&x_1&x_1\cr x_2&x_2&x_2\cr y_1&y_2\cr y_1&y_2\cr y_1)
\end{equation*}
\end{example}

\begin{point}{}*Let $\mathcal{C}(\bk;n)$ and $\mathcal{P}(\bk;n)$ be the sets of all multicompositions and of all multipartitions $\bgl=(\gl^{(1)};\cdots; \gl^{(m)})$ of $n$ such that  $\gl^{(i)}$ having $k_i$ parts $i=1, \ldots, m$, respectively. For $n_1, n_2\in\mathbb{N}$, $\bgl\in \mathcal{C}(\bk;n_1)$ and $\bmu\in \mathcal{C}(\bl;n_2)$, we set
\begin{eqnarray*}
  \bgl\!\vee\!\bmu &=&(\lambda^{(1)}\!\vee\!\mu^{(1)};\ldots;\lambda^{(m)}\!\vee\!\mu^{(m)})\in \mathcal{C}(k\!+\!\ell; n_1\!+\!n_2).
\end{eqnarray*}
Further we let \begin{eqnarray*}
         &&\mathcal{C}(\bk|\bl;n)=\{\bgl|\bmu: \exists n_1,n_2\in\mathbb{N}, \bgl\in \mathcal{C}(\bk;n_1),\bmu\in \mathcal{C}(\bl;n_2), \bgl\!\vee\!\bmu\in  \mathcal{C}(k\!+\!\ell; n)\},\\
 &&\mathcal{P}(\bk|\bl;n)=\{\bgl|\bmu\in \mathcal{C}(\bk|\bl;n):  \bgl,\bmu\text{ are multipartitions}\}.
       \end{eqnarray*}
Note that there is a bijection between $\mathcal{C}(\bk|\bl;n)$ and $\mathcal{C}(k+\ell;n)$ defined by $\bgl|\bmu\mapsto \bgl\vee\bmu$.\end{point}

A multipartition $\bgl=(\gl^{(1)}; \ldots; \gl^{(m)})$ of $n$ is said to be a \textit{$(\bk,\bl)$-hook multipartition} of $n$ if $\gl^{(i)}$ is a $(k_i,\ell_i)$-hook partition for $i=1, \ldots,m$. We denote by $H(\bk|\bl; n)$ the set of all  $(\bk,\bl)$-hook multipartitions of $n$. A \textit{$(\bk,\bl)$-tableau} $\mathrm{T}=(\mathrm{T}^{(1)};\ldots; \mathrm{T}^{(m)})$ of shape $\bgl$ is obtained from the diagram of $\bgl$ by inserting the symbols $\mathbf{x}\cup\mathbf{y}$ into its boxes, allowing repetitions.

\begin{definition}  A $(\bk,\bl)$-tableau $\mathrm{T}=(\mathrm{T}^{(1)};\ldots; \mathrm{T}^{(m)})$ is called \textit{$(\bk,\bl)$-semistandard} if
\begin{enumerate}
  \item[(a)] the $\mathbf{x}$-part $\mathrm{T}_\mathbf{x}=(\mathrm{T}^{(1)}_{\mathbf{x}};\ldots; \mathrm{T}^{(m)}_{\mathbf{x}})$ of $\mathrm{T}$ is a tableau with entries in $\bx$ ( thus its $\mathbf{y}$-part $\mathrm{T}_{\mathbf{y}}=(\mathrm{T}^{(1)}_{\mathbf{y}};\ldots; \mathrm{T}^{(m)}_{\mathbf{y}})$  is a skew tableau with entries in $\by$); and
  \item[(b)] $\mathrm{T}_{\mathbf{x}}^{(i)}$ is a row-semistandard tableau and $\mathrm{T}_{\mathbf{y}}^{(i)}$ is a column-semistandard skew tableaux for $i=1, \ldots, m$; and
  \item[(c)] for each $i$ with $1\leq i\leq m$, the color of the symbols in $\mathrm{T}^{(i)}$ is $i$.
\end{enumerate}
\end{definition}

Thus a $(\bk,\bl)$-tableau $\mathrm{T}=(\mathrm{T}^{(1)};\ldots; \mathrm{T}^{(m)})$ of shape
$\bgl$ is  $(\bk,\bl)$-semistandard means that for each $i$ with $1\leq i\leq m$, $\mathrm{T}^{(i)}$ is a $(k_i,\ell_i)$-semistandard tableau with entries of $\bx^{(i)}\cup\by^{(i)}$. Thanks to \cite[\S4.2]{B-Regev} (see also \cite[Lemma~4.2]{BKK}), a $\bgl$-tableau with entries $\bx\cup\by$ can be made into a $(\bk,\bl)$-semistandard tableau if and only if $\bgl$ is a $(\bk,\bl)$-hook multipartition.

\begin{example}\label{Exam:kl-tableau}Let $\bgl=((2,1,1);(3,2,2,1);(4,3,1)\in H(\bk|\bl;20)$ with $\bk|\bl=(1|1,1|2,1|3)$. Then $\mathbf{x}=\{x^{(1)}_1,x^{(2)}_1,x^{(3)}_1\}$, $\mathbf{y}=\{y^{(1)}_1,y^{(2)}_1,y^{(2)}_2,y_1^{(3)}, y_2^{(3)},y_3^{(3)}\}$, and
\begin{equation*}
  \mathrm{T}^{\bgl}=\biggl(\,\Tricolor(x_1^{(1)}&x_1^{(1)}\cr y_1^{(1)}\cr y_1^{(1)} |x_1^{(2)}&x_1^{(2)}&x_1^{(2)}\cr y_1^{(2)}&y_2^{(2)}
  \cr y_1^{(2)}&y_2^{(2)}\cr y_1^{(2)}|x_1^{(3)}&x_1^{(3)}&x_1^{(3)}&x_1^{(3)}
  \cr y_1^{(3)}&y_2^{(3)}&y_3^{(3)}\cr y_1^{(3)})\,\biggr)
\end{equation*}
is a $(\bk,\bl)$-semistandard $\bgl$-tableau
\end{example}

\begin{definition}\label{Def:KL-mu|nu}For $\bgl\in\mpn$ and $\bmu|\bnu\in \mathcal{C}(\bk|\bl;n)$, we say that a $(\bk,\bl)$-tableau $\mathrm{T}=(\mathrm{T}^{(1)};\ldots;\mathrm{T}^{(m)})$ of shape $\bgl$ is \textit{of type $\bmu|\bnu$} if for $a=1, \ldots, m$, the number of the symbol $x^{(a)}_i$, $i=1,\ldots, k_a$, (resp. $y^{(a)}_j$, $j=1,\ldots,\ell_a$)  occurring in  $\mathrm{T}$ is $\mu^{(a)}_i$ (resp. $\nu^{(a)}_j$).
 A $(\bk,\bl)$-tableau $\mathrm{T}=(\mathrm{T}^{(1)};\ldots;\mathrm{T}^{(m)})$ of type $\bmu|\bnu$ is $(\bk,\bl)$-\textit{semistandard} if
\begin{enumerate}
  \item[(a)] the $\mathbf{x}$-part $\mathrm{T}_\mathbf{x}=(\mathrm{T}^{(1)}_{\mathbf{x}};\ldots; \mathrm{T}^{(m)}_{\mathbf{x}})$  of $\mathrm{T}$ is  a tableau (thus its $\mathbf{y}$-part $\mathrm{T}_{\mathbf{y}}=(\mathrm{T}^{(1)}_{\mathbf{y}};\ldots; \mathrm{T}^{(m)}_{\mathbf{y}})$ is a skew tableau);
  \item[(b)] for $i=1,\ldots,m$,  $\mathrm{T}_{\mathbf{x}}^{(i)}$  and $\mathrm{T}_{\mathbf{y}}^{(i)}$ are row-semistandard tableaux and  column-semistandard skew tableaux respectively;
  \item[(c)] for $i=1,\ldots,m$, the colors of the symbols in $\mathrm{T}^{(i)}$ are not less than $i$.
\end{enumerate}
\end{definition}

Denote by $\std(\bgl,\bmu|\bnu)$ the set of $(\bk,\bl)$-semistandard $\bgl$-tableau of type $\bmu|\bnu$. Clearly $\std(\bgl,\bmu|\emptyset)$ is the set of all semistandard $\bgl$-tableau of type $\bmu$ in the sense of \cite[Defintion~4.4]{DJM}. For $\mathrm{S}\in\std(\bgl,\bmu|\bnu)$, we let $\mathrm{S}_{\bx}$ and $\mathrm{S}_{\by}$ be its $\bx$-part and $\by$-part respectively.  Then $\mathrm{S}_{\bx}$ is a row-semistandard tableau of type $\bmu$ and  $\mathrm{S}_{\by}$ satisfies
\begin{enumerate}
  \item[(1)] $\mathrm{Shape}(\mathrm{S}_{\by})=\bgl\backslash \mathrm{Shape}(\mathrm{S}_{\bx})$ is a skew diagram;
  \item[(2)] For  $a=1,\ldots, m$, the number of $y^{(a)}_j$, $j=1,\ldots,  \ell_a$,  occurring in  $\mathrm{S}_{\by}$ is $\nu^{(a)}_j$;
  \item[(3)] the entries of $\mathrm{S}_\mathbf{y}$ are nondecreasing in columns, strictly increasing in rows;
   \item[(4)] the colors of the symbols in $\mathrm{S}_{\by}^{(i)}$ are not less than $i$, $i=1, \ldots, m$,
\end{enumerate}
that is, $\mathrm{S}_{\by}$
is a  column-semistandard skew tableau of type $\bnu$. Sometimes we write $\mathrm{S}=\mathrm{S}_{\bx}|\mathrm{S}_{\by}$

\begin{example}For $\bgl\in H(\bk|\bl;n)$, there is a unique $(\bk,\bl)$-semistandard $\bgl$-tableau $ \mathrm{T}^{\bgl}$ of type $\bgl_{\sharp}|\bgl_{\ast}$. For example,  let $\bgl=((2,1,1);(3,2,2,1);(4,3,1)\in H(\bk|\bl;20)$ with $\bk|\bl=(1|1,1|2,1|3)$. Then $\bgl_{\sharp}=((2);(3,3);(2,1))$, $\bgl_{\ast}=((2);(3,2);(2,1,1))$, and $\mathrm{T}^{\bgl}$ is exactly the ones in  Example~\ref{Exam:kl-tableau}.
  \end{example}

\begin{remark}If  $\bgl\in H(\bk|\bl;n)$ then $\std(\bgl,\bmu|\bnu)\neq \emptyset$ for some $\bmu|\bnu\in \mathcal{C}(\bk|\bl;n)$. Indeed, if $\bgl\in H(\bk|\bl;n)$ then $\std(\bgl,\bgl_{\sharp}|\bgl_*)=\{\mathrm{T}^{\bgl}\}\neq \emptyset$. It is natural to expect the converse also holds, that is, if $\std(\bgl,\bgl_{\sharp}|\bgl_*)\neq \emptyset$ for some $\bmu|\bnu\in \mathcal{C}(\bk|\bl;n)$, then  $\bgl\in H(\bk|\bl;n)$.
\end{remark}

For $\bmu|\bnu\in \mathcal{C}(\bk|\bl;n)$, we let $\mathrm{T}^{\bmu|\bnu}$ be the unique $(k,\ell)$-tableau of shape $\bmu|\bnu$ of type $\bmu|\bnu$ with the symbols $x_j^{(i)}$ (resp. $y_b^{(i)}$) entered in the $a$-th (resp. $b$-th) row of $\mu^{(i)}$ (resp. $\nu^{(i)}$) for $a=1, \ldots, \ell_i$ (resp. $b=1, \ldots, k_i$) and $i=1, \ldots, m$.
\begin{definition}\label{Def:mu|nu(s)}
 Suppose that $\bgl\in H(\bk|\bl;n)$ and $\bmu|\bnu\in \mathcal{C}(\bk|\bl;n)$. For $\fs\in\mathrm{Std}(\bgl)$, we define $\bmu|\bnu(\fs)$ to be the $(\bk,\bl)$-tableau of shape $\bgl$ obtained from $\fs$ by replacing each entries $i\in\fs$ by the symbol of $\mathrm{T}^{\bmu|\bnu}$ filled in the same box of $\ft^{\bmu|\bnu}$ filled $i$.
\end{definition}

\begin{example}\label{Exam:stand-sstd}Let $\bk|\bl=(1|1,1|2)$, $\bgl=((3,1,1);(2,2,1))\in H(\bk|\bl;10)$ and let $\bmu|\bnu=((2);(3))|((2);(2,1))$. Then \begin{equation*}
 \mathfrak{t}^{\bmu|\bnu}=\biggl(\,\Twotab(1&2\cr 3&4|5&6&7\cr8&9\cr10)\,\biggr)\text{ and }
 \mathrm{T}^{\bmu|\bnu}=\biggl(\,\Twocolor(x_1^{(1)}&x_1^{(1)}\cr y_1^{(1)}&y_1^{(1)}|x_1^{(2)}&x_1^{(2)}&x_1^{(2)}\cr y_1^{(2)}&y_1^{(2)}\cr y_2^{(2)})\,\biggr).
\end{equation*}
  Now suppose that
  \begin{equation*}
 \mathfrak{s}=\biggl(\,\Twotab(1&2&5\cr 3\cr4|6&7\cr8&10\cr9)\,\biggr), \quad \tilde{\mathfrak{s}}=\biggl(\,\Twotab(1&2&6\cr 3\cr4|5&7\cr8&10\cr9)\,\biggr),\end{equation*}
 \begin{equation*}
 \mathfrak{s}_1=\biggl(\,\Twotab(1&3&9\cr 2\cr4|5&6\cr7&8\cr10)\,\biggr), \quad \mathfrak{s}_2=\biggl(\,\Twotab(1&4&9\cr 2\cr3|5&7\cr6&8\cr10)\,\biggr).\end{equation*}
  Then
  \begin{equation*}\bmu|\bnu(\mathfrak{s})=\bmu|\bnu(\tilde{\mathfrak{s}})=
 \biggl(\,\Twocolor(x_1^{(1)}&x_1^{(1)}&x_1^{(2)}\cr y_1^{(1)}\cr y_1^{(1)}|x_1^{(2)}&x_1^{(2)}\cr y_1^{(2)}&y_2^{(2)}\cr y_1^{(2)})\,\biggr),\quad\quad\end{equation*}
 \begin{equation*}\bmu|\bnu(\mathfrak{s}_1)=\bmu|\bnu(\mathfrak{s}_2)=
 \biggl(\,\Twocolor(x_1^{(1)}&y_1^{(1)}&y_1^{(2)}\cr x_1^{(1)}\cr y_1^{(1)}|x_1^{(2)}&x_1^{(2)}\cr x_1^{(2)}&y_1^{(2)}\cr y_2^{(2)})\,\biggr).\end{equation*}
  It is easy to see that $\bmu|\bnu(\mathfrak{s})$, $\bmu|\bnu(\tilde{\mathfrak{s}})$ are  $(\bk,\bl)$-semistandard $\bgl$-tableaux of type $\bmu|\bnu$ and  $\bmu|\bnu(\mathfrak{s}_1)$, $\bmu|\bnu(\mathfrak{s}_2)$ are not $(\bk,\bl)$-semistandard $\bgl$-tableaux.
\end{example}

\begin{definition} For $\fs\in\std(\bgl)$ and $1\leq i\leq n$, if $i=(a,b,c)\in \fs$ then let     \begin{equation*}    \mathrm{row}_\fs(i)=\left\{\begin{array}{cc}  x_{a}^{(c)} & \text{if } a\leq k_c; \\ y_{b}^{(c)}& \text{otherwise}. \end{array}    \right.\end{equation*}\end{definition}

\begin{remark}\label{Remark:s=s_x|s_y}
If $\bgl\in H(\bk|\bl; m)$ and $\mathrm{S}\in \std(\bgl,\bmu|\bnu)$, then there exists a standard $\bgl$-tableau $\fs$ with $\mathrm{S}=\bmu|\bnu(\fs)$.  Furthermore, the function $\mathrm{row}_{\fs}$ enables us to recover the standard $\bgl$-tableau $\fs$. In this case, we write $\fs=\fs_{\bmu}|\fs_{\bnu}$
where $\fs_{\bmu}$ and $\fs_{\bnu}$ are the subtableau and skew subtableau of $\fs$ such that $\bmu(\fs_{\bmu})=\mathrm{S}_{\bx}$ and $\bnu(\fs_{\bnu})=\mathrm{S}_{\by}$ respectively.\end{remark}

For $\fs\in\std(\bgl)$ and $i=1, \ldots, n$, it is clear that the nodes of $\fs$ filling entries $1, \ldots, i$ is a Young diagram of a multipartition and denote it by $\fs\downarrow\!i$. For $\fs,\ft\in\std(\bgl)$, we write $\fs\unlhd\ft$ if $\fs\downarrow\!i\unlhd\ft\downarrow\!i$ for all $i=1,\ldots, n$ and write $\fs\lhd\ft$ if $\fs\unlhd\ft$ and $\fs\neq\ft$.
\begin{corollary}\label{Cor:fisrt-last}
  Let $\mathrm{S}\in \std(\bgl, \bmu|\bnu)$. Then there exist standard $\bgl$-tableaux $\mathrm{first}(\mathrm{S})$ and $\mathrm{last}(\mathrm{S})$ such that
  \begin{enumerate}
    \item[(i)] $\bmu|\bnu(\mathrm{first}(\mathrm{S}))=\bmu|\bnu(\mathrm{last}(\mathrm{S}))=\mathrm{S}$ and
    \item[(ii)] if $\fs\in\mathrm{std}(\bgl)$ such that $\bmu|\bnu(\fs)=\mathrm{S}$ then $\mathrm{first}(\mathrm{S})\unrhd \fs\unrhd \mathrm{last}(\mathrm{S})$.
  \end{enumerate}
\end{corollary}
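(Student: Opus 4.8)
The plan is to study the fibre $\mathcal{F}(\mathrm{S}):=\{\fs\in\std(\bgl)\mid\bmu|\bnu(\fs)=\mathrm{S}\}$ of the map $\fs\mapsto\bmu|\bnu(\fs)$ over $\mathrm{S}$, and to show that this finite set has a greatest and a least element for the dominance order on $\std(\bgl)$; these will be $\mathrm{first}(\mathrm{S})$ and $\mathrm{last}(\mathrm{S})$, so that (i) holds by construction and (ii) is precisely the statement that they are the top and bottom of $\mathcal{F}(\mathrm{S})$. First I would note that $\mathcal{F}(\mathrm{S})\neq\emptyset$ by Remark~\ref{Remark:s=s_x|s_y}, so that, being a finite subposet of $(\std(\bgl),\unrhd)$, it certainly has maximal and minimal elements; the entire difficulty is to prove these are \emph{unique}, i.e.\ that $\mathcal{F}(\mathrm{S})$ has a greatest and a least element.

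The engine is the elementary swap recorded just before the statement: whenever $\fs\in\mathcal{F}(\mathrm{S})$ and $i$ satisfies $\mathrm{row}_{\fs}(i)\neq\mathrm{row}_{\fs}(i+1)$ with $i,i+1$ in the same row of $\ft^{\bmu|\bnu}$, the tableau $\tilde{\fs}:=\fs(i,i+1)$ again lies in $\mathcal{F}(\mathrm{S})$. The first step is to compare $\fs$ with $\tilde{\fs}$ in dominance. Since $\fs\downarrow j=\tilde{\fs}\downarrow j$ for every $j\neq i$ (nothing moves for $j<i$, while for $j\geq i+1$ both sides occupy the same nodes), the two shapes can differ only at level $i$, where one contains the node of $\fs$ holding $i$ and the other the node holding $i+1$; as both nodes are addable to the common shape of $\{1,\dots,i-1\}$, the resulting shapes are $\unrhd$-comparable, the direction being dictated by whether the node holding $i+1$ lies dominance-above or below the node holding $i$. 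Consequently, starting from any element of $\mathcal{F}(\mathrm{S})$ and repeatedly performing the swaps that strictly raise the tableau, the process terminates — $\std(\bgl)$ is finite and $\unrhd$ is a partial order — at an $\fs^{\ast}\in\mathcal{F}(\mathrm{S})$ admitting no dominance-raising valid swap; dually one reaches an $\fs_{\ast}$ admitting no dominance-lowering valid swap.

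The crux, and the step I expect to be the main obstacle, is to promote this \emph{local} extremality to \emph{global} extremality, thereby securing uniqueness: I must show $\fs^{\ast}\unrhd\ft$ for every $\ft\in\mathcal{F}(\mathrm{S})$ (so $\fs^{\ast}=\mathrm{first}(\mathrm{S})$) and dually $\fs_{\ast}=\mathrm{last}(\mathrm{S})$. I would exploit the decomposition $\fs=\fs_{\bx}|\fs_{\by}$ of Remark~\ref{Remark:s=s_x|s_y}: because the symbol attached to an entry depends only on the row of $\ft^{\bmu|\bnu}$ containing it, the splitting of $\{1,\dots,n\}$ into entries mapping to $\bx$-symbols and those mapping to $\by$-symbols is the \emph{same} for every $\fs\in\mathcal{F}(\mathrm{S})$, so $\fs\mapsto(\fs_{\bx},\fs_{\by})$ is well defined on the fibre. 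On the $\bx$-part, $\mathrm{S}_{\bx}$ is an ordinary row-semistandard $\bgl_{\sharp}$-tableau of type $\bmu$, and the existence of a greatest and least standard filling with prescribed semistandard image is the classical Dipper–James–Mathas fact \cite{DJM}; on the $\by$-part the same result applies after passing to conjugates, as $\mathrm{S}_{\by}$ is column-semistandard of shape $\bgl_{\ast}$. The remaining, genuinely super, point is to verify that the greatest (resp.\ least) fillings of the two parts are compatible with the standardness constraint coupling them — in each column the $\bx$-entries must precede the $\by$-entries — so that they assemble into a single standard $\bgl$-tableau which is simultaneously extremal; equivalently, that the swap graph on $\mathcal{F}(\mathrm{S})$ is connected with a confluent (diamond) structure, forcing a unique source and sink. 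This compatibility, rather than either classical statement in isolation, is the heart of the argument.
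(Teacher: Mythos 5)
Your swap analysis is exactly the paper's entire written proof: the paper takes $\fs$ in the fibre, observes that for $i,i+1$ in the same row of $\ft^{\bmu|\bnu}$ with $\mathrm{row}_{\fs}(i)\neq\mathrm{row}_{\fs}(i+1)$ the tableau $\tilde{\fs}=\fs(i,i+1)$ is again standard with $\bmu|\bnu(\tilde{\fs})=\mathrm{S}$ and is dominance-comparable to $\fs$ (direction governed by $\mathrm{row}_{\fs}(i)\lessgtr\mathrm{row}_{\fs}(i+1)$), and then stops, leaving the termination and the local-to-global step you rightly isolate entirely implicit, just as its model \cite[proof of (4.12)]{DJM} does. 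Where you genuinely diverge is in how you propose to close that step. The paper's (tacit) route is that a fibre element admitting no raising swap is \emph{unique}: the numbers lying in a fixed row of $\ft^{\bmu|\bnu}$ form an interval of $\{1,\dots,n\}$, and the no-raising condition forces $\mathrm{row}_{\fs}$ to be weakly increasing on each such interval, which pins the filling down; hence every ascending chain of swaps ends at one and the same tableau $\mathrm{first}(\mathrm{S})$, and transitivity along the chain gives $\mathrm{first}(\mathrm{S})\unrhd\fs$ (dually for $\mathrm{last}(\mathrm{S})$). Your route — split $\fs\mapsto(\fs_{\bx},\fs_{\by})$ via Remark~\ref{Remark:s=s_x|s_y} and quote the classical first/last statement of \cite{DJM} on each factor — also works, and the compatibility you flag as the heart of the matter is in fact automatic, for a reason worth recording: since $\ft^{\bmu|\bnu}$ is filled in order, the numbers carrying a fixed symbol form an interval, and these intervals occur in the linear order on symbols of \S\ref{Subsec:linearly-order}; so whenever a node of the $\bx$-part is adjacent, in a row or column, to a node of the $\by$-part, the two symbols are distinct and the inequality between the entries is forced by the symbol order, independently of the choices made inside each fibre (nonemptiness of the fibre of $\mathrm{S}$ guarantees the forced direction is the standard one). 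Consequently the fibre of $\mathrm{S}$ is precisely the product of the fibres of $\mathrm{S}_{\bx}$ and $\mathrm{S}_{\by}$, and since row-counts, hence the cumulative sums defining $\unrhd$, add across the $\bx/\by$ split, the glue of the two extremal fillings is extremal — with the caveat, which you should make explicit, that conjugation reverses dominance, so on the $\by$-factor the element you want is the conjugate of the classical \emph{last}, not first. Net comparison: the paper's implicit argument is shorter and self-contained; yours costs two invocations of the classical result plus the gluing observation above (which must actually be written out for your proof to be complete), but buys a structural fact the paper never states, namely the product decomposition of the fibre, which also makes the confluence of your swap graph transparent.
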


\begin{proof}Thanks to Remark~\ref{Remark:s=s_x|s_y}, there exists a standard
$\bgl$-tableau $\fs$ such that $\bmu|\bnu(\fs)=\mathrm{S}$. Now if there exists an integer $i$ such that $\mathrm{row}_{\fs}(i)\neq \mathrm{row}_{\fs}(i+1)$ and $i, i+1$ are in the same row of $\ft^{\bmu|\bnu}$, then the nodes of $\fs$ filled with entries $i, i+1$ are not in the same row (column) of a component of $\fs$. In fact, if $i, i+1$ are in the same row of a component of $\fs$, then $\mathrm{row}_{\fs}(i)\neq \mathrm{row}_{\fs}(i+1)$ means $\mathrm{row}_{\fs}(i)\cup\mathrm{row}_{\fs}(i+1)\in \by$, which implies that $i, i+1$ are not in the same row of $\ft^{\bmu|\bnu}$ owing to $\bmu|\bnu(\fs)\in \std(\bgl,\bmu|\bnu)$;
 While they are in the same column of a component of $\fs$, then $\mathrm{row}_{\fs}(i)\neq \mathrm{row}_{\fs}(i+1)$ means either $\mathrm{row}_{\fs}(i)\cup\mathrm{row}_{\fs}(i+1)\in \bx$ or $\mathrm{row}_{\fs}(i)\in\bx$ and $\mathrm{row}_{\fs}(i+1)\in\by$, which implies that $i, i+1$ are not in the same column of $\ft^{\bmu|\bnu}$ due to $\bmu|\bnu(\fs)\in \std(\bgl,\bmu|\bnu)$. As a consequence, the tableau $\tilde{\fs}=\fs(i,\,i+1)$ is standard and $\bmu|\bnu(\tilde{\fs})=\mathrm{S}$. Also, $\fs\rhd\tilde{\fs}$ if $\mathrm{row}_{\fs}(i) < \mathrm{row}_{\fs}(i+1)$ and $\tilde{\fs}\rhd\fs$ otherwise. Therefore we complete the proof.
\end{proof}

\section{Permutation Supermodules}\label{Sec:Permutation-supermodules}
This section introduces the permutation supermodules of the cyclotomic Hecke algebra and provide a special (homogeneous) basis of these modules, which will used to determine the homomorphisms between permutation supermodules in \S\ref{Sec:Cyclotomic-Schur-superalgebras}.

\begin{definition}\label{Def:Supermodule}For a multicomposition $\bmu$, we write $\widetilde{\bmu}=(1^{|\bmu^{(1)}|}; \ldots;1^{|\bmu^{(m)}|})$.
For $\bmu|\bnu\in C(\bk|\bl;n)$, we let $\mathfrak{S}_{\bmu|\bnu}:=\mathfrak{S}_{\bmu\vee\bnu}\cong \mathfrak{S}_{\bmu}\times \mathfrak{S}_{\bnu}$, $\bmu^*:=\bmu|\widetilde{\bnu}$ and $\bnu_*:=\widetilde{\bmu}|\bnu$. Define
\begin{equation*}
  x_{\bmu*}=\sum_{w\in \mathfrak{S}_{\bmu^*}}T_w \qquad \text{ and }\qquad  y_{\bnu_*}=\sum_{w\in \mathfrak{S}_{\bnu_*}}(-q)^{\ell(w)}T_w.
\end{equation*}
The $H$-module $M_{\bmu|\bnu}=m_{\bmu^*}n_{\bnu_*}H$ is called the \textit{permutation supermodule} of $H$.
\end{definition}

Clearly $\mathfrak{S}_{\bmu|\bnu}=\mathfrak{S}_{\bmu^*}\mathfrak{S}_{\bnu_*}$ and $x_{\bmu^*}y_{\bnu_*}=y_{\bnu_*}x_{\bmu^*}$. Let us remark that the permutation supermodule $M_{\bmu|\bnu}$ is generated by $m_{\bmu|\bnu}=m_{\bmu^*}n_{\bnu_*}$, which is a super analogues of \cite[Definition~3.8]{DJM}, or a cyclotomic analogues of the signed $q$-permutation modules of $H_n(q)$ defined in \cite[\S5]{DR}.

\begin{point}{}*\label{Point:Dec}
 Given $\bgl\in\mpn$ and  $\mathrm{S}\in \std(\bgl, \bmu|\bnu)$, if $\mathrm{S}=\bmu|\bnu(\fs)$ for some $\fs\in\mathrm{std}(\bgl)$ then, thanks to Remark~\ref{Remark:s=s_x|s_y}, we may write $\fs=\fs_{\bmu}|\fs_{\bnu}$ such that $\bmu(\fs_{\bmu})=\mathrm{S}_{\bx}$ and $\bmu(\fs_{\bnu})=\mathrm{S}_{\by}$. For $\ft\in\mathrm{std}(\bgl)$, we denote by $\ft_{\bmu}$ (resp. $\ft_{\bnu}$) the standard subtableau (resp. standard skew subtableau) of $\ft$ with shape $\mathrm{shape}(\fs_{\bmu})$ (resp.  $\mathrm{shape}(\fs_{\bnu})$). Note that the standard (sub)tableau $\ft_{\bmu}$ may and will be viewed as the unique row-standard $\mathrm{shape}(\ft_{\bmu})|\tilde{\bnu}$-tableau $\ft_{\bmu}|\ft^{\widetilde{\bnu}}$, where $\ft^{\widetilde{\bnu}}$ is the $\widetilde{\bnu}$-tableau with the numbers occurring in  $\ft_{\bnu}$ entered in order first along the rows of the first component of $\widetilde{\bnu}$, and then along the rows of the second component, and so on. Similarly, the standard skew subtableau $\ft_{\bnu}$ may and will be viewed as the unique row-standard $\widetilde{\bmu}|\mathrm{shape}(\ft_{\bnu})$-tableau $\ft^{\widetilde{\bmu}}|\ft_{\bnu}$, where $\ft^{\widetilde{\bmu}}$ is the $\widetilde{\bmu}$-tableau with the numbers occurring in $\ft_{\bmu}$ entered in order  first along the rows of the first component of $\widetilde{\bnu}$, and then along the rows of the second component, and so on. Thus we may  and will write $\ft=\ft_{\bmu}|\ft_{\bnu}$ with $\mathrm{shape}(\ft_{\bmu})=\mathrm{shape}(\fs_{\bmu})$ and  $\mathrm{shape}(\ft_{\bnu})=\mathrm{shape}(\fs_{\bnu})$.

 Now we let $\ft^{\mathrm{shape}(\ft_{\bmu})}$ (resp. $\ft^{\mathrm{shape}(\ft_{\bnu})}$) be  the
$\mathrm{shape}(\ft_{\bmu})$-tableau (resp.  skew $\mathrm{shape}(\ft_{\bnu})$-tableau) with the numbers occurring in the subtableau $\ft_{\bmu}$ (resp. skew subtableau $\ft_{\bnu}$) entered in
order first along the rows of the first component, and then along the rows of the second component, and so on. Finally, let $d(\ft_{\bmu})$ (resp. $d(\ft_{\bnu})$) be the element of $\mathfrak{S}_{\mathrm{shape}(\ft_{\bmu})}$ (resp. $\mathfrak{S}_{\mathrm{shape}(\ft_{\bnu})}$) such that $\ft_{\bmu}=d(\ft_{\bmu})\ft^{\mathrm{shape}(\ft_{\bmu})}$ (resp. $\ft_{\bnu}=d(\ft_{\bnu})\ft^{\mathrm{shape}(\ft_{\bnu})}$). Clearly,  $d(\ft_{\bmu})$ (resp. $d(\ft_{\bnu})$) is exactly the element of $\mathfrak{S}_{\mathrm{shape}(\ft_{\bmu})|\widetilde{\bnu}}$ (resp. $\mathfrak{S}_{\widetilde{\bmu}|\mathrm{shape}(\ft_{\bnu})}$) such that $\ft_{\bmu}|\ft^{\widetilde{\bnu}}=d(\ft_{\bmu})\ft^{\mathrm{shape}(\ft_{\bmu})}|\ft^{\widetilde{\bnu}}$ (resp. $\ft^{\widetilde{\bmu}}|\ft_{\bnu}=d(\ft_{\bnu})\ft^{\widetilde{\bmu}}|\ft^{\mathrm{shape}(\ft_{\bnu})}$).
\end{point}

The following easy verified fact will be useful.

\begin{lemma}\label{Lemm:shape(bmu)=shape(bnu)}Let $\bgl\in\mpn$ and $\bmu|\bnu\in \mathcal{C}(\bk|\bl;n)$.  If $\fs, \ft$ are standard $\bgl$-tableaux satisfying $\bmu|\bnu(\fs)=\bmu|\bnu(\ft)=\mathrm{S}\in \std(\bgl,\bmu|\bnu)$, then $\mathrm{shape}(\fs_{\bmu})=\mathrm{shape}(\ft_{\bmu})$ and $\mathrm{shape}(\fs_{\bnu})=\mathrm{shape}(\ft_{\bnu})$.
\end{lemma}

\begin{proof}It suffices to show that $\mathrm{shape}(\fs_{\bmu})=\mathrm{shape}(\ft_{\bmu})$. Thanks to Corollary~\ref{Cor:fisrt-last}, there exist $\mathrm{first}(\mathrm{S}), \mathrm{last}(\mathrm{S})\in \std(\bgl)$ such that $\bmu|\bnu(\mathrm{first}(\mathrm{S}))=\bmu|\bnu(\mathrm{last}(\mathrm{S}))=\mathrm{S}$ and $\mathrm{first}(\mathrm{S})\unrhd\fs,\ft\unrhd\mathrm{last}(\mathrm{S})$. Thus there exist $w_{\fs},w_{\ft}\in \mathfrak{S}_n$ such that $\fs=\mathrm{first}(\mathrm{S})w_{\fs}$ and $\ft=\mathrm{first}(\mathrm{S})w_{\ft}$, this shows $\ft=\fs w$ for some $w\in \mathfrak{S}_n$. Now let $s_{i_1}\cdots s_{i_j}$ be a reduced expression of $w$. Then $\ft_a=\fs s_{i_1}\cdots s_{i_a}$, $a=1, \ldots, j$, are standard $\bgl$-tableau such that $\bmu|\bnu(\ft_i)=\mathrm{S}$. Therefore we may assume that $\ft=\fs(i,i+1)$ for some $i$ such that $i,i+1$ in the same row of $\ft^{\bmu|\bnu}$. Note that the symbols filled in the same row of $\mathrm{T}^{\bmu|\bnu}$ are same. Therefore $\mathrm{shape}(\fs_{\bmu})=\mathrm{shape}(\ft_{\bmu})$.
\end{proof}
Thanks to Corollary~\ref{Cor:fisrt-last} and Remark~\ref{Remark:mn-def}, we can get the following definition, which is a super analogue of \cite[Definition~4.8]{DJM}.
 \begin{definition}\label{Def:St}
   For $\mathrm{S}\in \std(\bgl,\bmu|\bnu)$ and $\ft\in\mathrm{std}(\bgl)$, we define
 \begin{equation*}
 m_{\mathrm{S}\ft}:=\sum_{\fs\in\mathrm{std(\bgl)}, \bmu|\bnu(\fs)=\mathrm{S}}m_{\fs_{\bmu}\ft_{\bmu}}\sum_{\fs\in\mathrm{std(\bgl)}, \bmu|\bnu(\fs)=\mathrm{S}}n_{\fs_{\bnu}\ft_{\bnu}}\end{equation*}
 \end{definition}

\begin{example}\label{Exam:tableau-type}Assumptions and  notations being as in  Example~\ref{Exam:stand-sstd}, let \begin{equation*}
 \mathrm{S}=\biggl(\,\Twocolor(x_1^{(1)}&x_1^{(1)}&x_1^{(2)}\cr y_1^{(1)}\cr y_1^{(1)} |x_1^{(2)}&x_1^{(2)}
  \cr y_1^{(2)}&y_2^{(2)}\cr y_1^{(2)})\,\biggr)\in \std(\bgl,\bmu|\bnu).\end{equation*}
  Then there are three standard $\bgl$-tableaux $\fs$ such that $\bmu|\bnu(\fs)=\mathrm{S}$:
  \begin{equation*}
 \fa=\biggl(\,\Twotab(1&2&5\cr 3\cr 4|6&7
  \cr 8&10\cr 9)\,\biggr),
  \fb=\biggl(\,\Twotab(1&2&6\cr 3\cr 4|5&7
  \cr 8&10\cr 9)\,\biggr), \fc=\biggl(\,\Twotab(1&2&7\cr 3\cr 4|5&6
  \cr 8&10\cr 9)\,\biggr).\end{equation*}
Therefore, according to \S\ref{Point:Dec}, we yield  \begin{eqnarray*}
 &&\fa_{\bmu}\!=\!\protect{\biggl(\!\Twotab(1&2&5|6&7
  )\!\biggr)\!=\!\biggl(\!\Twotab(1&2&5\cr 3\cr 4|6&7
  \cr 8\cr9\cr10)\!\biggr)},
 \fa_{\bnu}\!=\!\protect{\biggl(\!\Twotab(3\cr 4|8&9\cr 10
  )\!\biggr)\!=\!\biggl(\,\Twotab(1\cr2\cr5\cr 3\cr 4|6\cr7
  \cr 8&10\cr9)\,\biggr)};\\
&& \fb_{\bmu}\!\!=\!\!\protect{\biggl(\Twotab(1&2&6|5&7
  )\biggr)\!=\!\biggl(\,\Twotab(1&2&6\cr 3\cr 4|5&7
  \cr 8\cr9\cr10)\biggr)}, \fb_{\bnu}\!=\!\protect{\biggl(\Twotab(3\cr 4|8&9\cr 10
  )\biggr)\!=\!\biggl(\,\Twotab(1\cr2\cr6\cr 3\cr 4|5\cr7
  \cr 8&10\cr9)\,\biggr)};\\
 && \fc_{\bmu}\!\!=\!\!\protect{\biggl(\Twotab(1&2&7|5&6
  )\biggr)\!=\!\biggl(\,\Twotab(1&2&7\cr 3\cr 4|5&6
  \cr 8\cr 9\cr10)\,\biggr)}, \fc_{\bnu}\!=\!\protect{\biggl(\Twotab(3\cr 4|8&9\cr 10
  )\biggr)\!=\!\biggl(\,\Twotab(1\cr2\cr7\cr 3\cr 4|5\cr6
  \cr 8&10\cr9)\,\biggr)}.\end{eqnarray*}
  So, for any $\ft\in\std(\bgl)$, we have
   \begin{equation*}
 m_{\mathrm{S}\ft}=(m_{\fa_{\bmu}\ft_{\bmu}}+m_{\fb_{\bmu}\ft_{\bmu}}+m_{\fc_{\bmu}\ft_{\bmu}}) (n_{\fa_{\bnu}\ft_{\bnu}}
 +n_{\fb_{\bnu}\ft_{\bnu}}+
 n_{\fc_{\bnu}\ft_{\bnu}}).\end{equation*}
 \end{example}
 \begin{lemma}\label{Lemm-basis}
   Suppose that $\bgl$ is a $(\bk,\bl)$-hook multipartition and $\bmu|\bnu\in \mathcal{C}(\bk|\bl;n)$.  If $\mathrm{S}\in \std(\bgl,\bmu|\bnu)$ and $\ft\in \std(\bgl)$ then $m_{\mathrm{S}\ft}\in M_{\bmu|\bnu}$.
 \end{lemma}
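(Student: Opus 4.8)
The plan is to show that the sum defining $m_{\mathrm{S}\ft}$ in Definition~\ref{Def:St} is a right $H$-multiple of the generator $m_{\bmu|\bnu}=m_{\bmu^*}n_{\bnu_*}$ of $M^{\bmu|\bnu}$, by separating the symmetric ($\bx$) and antisymmetric ($\by$) halves of each summand and then recombining them. First I would record the combinatorial input already isolated after Definition~\ref{Def:St}: every standard $\fs$ with $\bmu|\bnu(\fs)=\mathrm{S}$ puts the \emph{same} set of entries into the $\bx$-nodes (the nodes of $\bgl_{\sharp}$) and the same set into the $\by$-nodes (the nodes of $\bgl_{\ast}$), and the condition $\bmu|\bnu(\fs)=\mathrm{S}$ is equivalent to the two conditions $\bmu(\fs_{\bmu})=\mathrm{S}_{\bx}$ and $\bnu(\fs_{\bnu})=\mathrm{S}_{\by}$. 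Using Remark~\ref{Remark:tableau-grading} to view $\fs_{\bmu},\ft_{\bmu}$ as genuine standard tableaux of shape $\mathrm{shape}(\ft_{\bmu})^{*}=\bgl_{\sharp}|\widetilde{\bnu}$ and $\fs_{\bnu},\ft_{\bnu}$ as standard tableaux of shape $\mathrm{shape}(\ft_{\bnu})_{*}=\widetilde{\bmu}|\bgl_{\ast}'$, each summand becomes a product $m_{\fs_{\bmu}\ft_{\bmu}}\,n_{\fs_{\bnu}\ft_{\bnu}}$ of an element of the cellular basis $\mathcal{M}$ and an element of the cellular basis $\mathcal{N}$ of $H$ from Theorem~\ref{Them:mn-basis}.

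The two engines of the argument are the Dipper--James--Mathas semistandard basis lemma and its conjugate. Since $\mathrm{S}_{\bx}$ is a row-semistandard $\bgl_{\sharp}$-tableau of type $\bmu$, applying \cite[\S4]{DJM} (see also \cite[\S4]{Mathas}) to the multicomposition $\bmu^{*}=\bmu|\widetilde{\bnu}$, whose $m$-element is precisely $m_{\bmu^*}$, gives that the coset sum $\sum_{\fs_{\bmu}}m_{\fs_{\bmu}\ft_{\bmu}}$ over all standardizations of $\mathrm{S}_{\bx}$ lies in $M^{\bmu^*}=m_{\bmu^*}H$. Dually, because $\mathrm{S}_{\by}$ is a column-semistandard $\bgl_{\ast}$-skew tableau of type $\bnu$, the transpose of the same statement for the basis $\mathcal{N}$, applied to $\bnu_{*}=\widetilde{\bmu}|\bnu$, shows that $\sum_{\fs_{\bnu}}n_{\fs_{\bnu}\ft_{\bnu}}$ lies in $n_{\bnu_*}H$. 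Both facts are specializations of results recalled in the excerpt and require no new ingredient; note also that $\fs_{\bmu}$ depends only on the $\bx$-filling and $\fs_{\bnu}$ only on the $\by$-filling, since the recording parts $\widetilde{\bnu}$ and $\widetilde{\bmu}$ carry the fixed complementary value-sets in order.

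I expect the recombination to be the real obstacle. The defining sum runs over standard $\fs$, and the requirement that $\fs$ be column-standard across the hook boundary of $\bgl$ couples the choices of $\fs_{\bmu}$ and $\fs_{\bnu}$, so the super-sum is \emph{not} literally the product of the two independent coset sums above. To overcome this I would aim to complete the sum to the full product $\bigl(\sum_{\fs_{\bmu}}m_{\fs_{\bmu}\ft_{\bmu}}\bigr)\bigl(\sum_{\fs_{\bnu}}n_{\fs_{\bnu}\ft_{\bnu}}\bigr)$, by showing that the extra, boundary-incompatible cross terms either vanish by a Garnir-type relation or already lie in $M^{\bmu|\bnu}$. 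Granting this, the product lies in $m_{\bmu^*}H\cdot n_{\bnu_*}H$, and the final step is to transport the Hecke factor produced by the $\bx$-sum to the right of $n_{\bnu_*}$, using the commutation $m_{\bmu^*}n_{\bnu_*}=n_{\bnu_*}m_{\bmu^*}$ (valid since $\mathfrak{S}_{\bmu^*}$ and $\mathfrak{S}_{\bnu_*}$ have disjoint supports) together with the fact that $\ft_{\bmu}$ and $\ft_{\bnu}$ are restrictions of one standard $\ft$. Controlling this transport, if necessary by an induction on the dominance order $\unrhd$ using $\mathrm{first}(\mathrm{S})$ and $\mathrm{last}(\mathrm{S})$ from Corollary~\ref{Cor:fisrt-last}, is the heart of the proof and the step I expect to demand the most care.
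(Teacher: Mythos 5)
Your overall strategy is the paper's own: the paper likewise expands each summand $m_{\fs_{\bmu}\ft_{\bmu}}n_{\fs_{\bnu}\ft_{\bnu}}$ via Definition~\ref{Def:x-y-lambda}, factors the sum into an $\bx$-factor and a $\by$-factor, and invokes the proof of \cite[Lemma~4.10]{DJM} to conclude $m_{\mathrm{S}\ft}=x_{\bmu}u^+_{\bmu}y_{\bnu}u^-_{\bnu}h=m_{\bmu|\bnu}h\in M^{\bmu|\bnu}$. But your execution has a genuine gap precisely where you announce it: the recombination is only a plan (``I would aim to complete the sum\ldots'', ``Granting this\ldots''), and the plan attacks a nonexistent obstacle. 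The index set $\{\fs\in\std(\bgl)\mid \bmu|\bnu(\fs)=\mathrm{S}\}$ \emph{does} factor as the product of the standardizations of $\mathrm{S}_{\bx}$ and of $\mathrm{S}_{\by}$. The reason is that the blocks of entries of $\ft^{\bmu|\bnu}$ occur in the same linear order as the symbols of Section~\ref{Sec:super-tableaux} (within component $c$ the $\mu^{(c)}$-rows precede the $\nu^{(c)}$-rows, and components are ordered by color), so every entry assigned to a smaller symbol is smaller than every entry assigned to a larger one. Consequently, any standardness comparison in $\fs$ between an $\bx$-node and a $\by$-node --- a column comparison across the hook boundary, or a row comparison when $\bx$- and $\by$-nodes share a row --- compares entries of two \emph{distinct} symbols and hence holds for all choices of placements or for none. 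There are no ``boundary-incompatible cross terms'' to cancel, no Garnir relations are needed, and by distributivity $m_{\mathrm{S}\ft}$ literally equals $\bigl(\sum_{\fs_{\bmu}}m_{\fs_{\bmu}\ft_{\bmu}}\bigr)\bigl(\sum_{\fs_{\bnu}}n_{\fs_{\bnu}\ft_{\bnu}}\bigr)$; Example~\ref{Exam:tableau-type} illustrates this with three $\bx$-placements and a single $\by$-placement.

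The second, more serious problem is your transport step. Knowing only that the $\bx$-factor lies in $m_{\bmu^*}H$ and the $\by$-factor in $n_{\bnu_*}H$ does not suffice, since $m_{\bmu^*}H\,n_{\bnu_*}H\not\subseteq m_{\bmu^*}n_{\bnu_*}H$ in general: the Hecke factor produced by the $\bx$-sum involves $T_{d(\ft_{\bmu})}$ and the $u^{+}$-terms, whose underlying permutations and Jucys--Murphy elements are \emph{not} supported in $\mathfrak{S}_{\bmu^*}$, so the commutation $x_{\bmu^*}y_{\bnu_*}=y_{\bnu_*}x_{\bmu^*}$ (disjoint supports of the Young subgroups) does not let you slide that factor past $n_{\bnu_*}$. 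This is exactly where the paper computes rather than argues abstractly: following the proof of \cite[Lemma~4.10]{DJM}, it retains the explicit shape of each factor, $\bigl(x_{\bmu}T^{*}_{d}u^{+}_{\cdot}h_1^{*}T_{d(\ft_{\bmu})}\bigr)$ with $d=d(\mathrm{first}(\mathrm{S})_{\bmu})$ and the analogous $\by$-factor with $d'=d(\mathrm{first}(\mathrm{S})_{\bnu})$, and uses the specific commutation rules for the $u^{\pm}$ and the $T_w$ established there to reach $x_{\bmu}u^{+}_{\bmu}y_{\bnu}u^{-}_{\bnu}h$. Your suggestion to bring in Corollary~\ref{Cor:fisrt-last} and dominance induction is compatible with that computation, but as written your proposal denies the factorization that actually holds and asserts the transport from an insufficient commutation, so the proof is incomplete at its acknowledged heart.
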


\begin{proof}Thanks to Lemma~\ref{Lemm:shape(bmu)=shape(bnu)}, we assume that  $\bga=\mathrm{shape}(\fs_{\bmu})$ and $\bgb=\mathrm{shape}(\fs_{\bnu})$ for all $\fs\in \mathrm{std}(\bgl)$ such that $\bmu|\bnu(\fs)=\mathrm{S}$, where $\bga$ is a multipartition of $|\bmu|$ and $\bgb$ is a multicomposition of $|\bnu|$. Furthermore, according to \S\ref{Point:Dec}, we will identify $\bga$ and $\bgb$ with $\bga^*$ and $\bgb_*$ respectively. Notice that the proof of \cite[Lemma~4.10]{DJM} shows there exists $h\in H$, which commutes with $y_{\bnu_{*}}u_{\bnu_{*}}^-$, such that
 \begin{eqnarray*}
  \sum_{\substack{\fs\in\mathrm{std}(\bgl)\\\bmu|\bnu(\fs)=\mathrm{S}}}
 T_{d(\fs_{\bmu})}^*x_{\bga^{*}}u_{\bga^{*}}^+T_{d(\ft_{\bmu})}&=&
 x_{\bmu^{*}}u_{\bmu^*}^+h\end{eqnarray*}

 Similarly there exists $h'\in H$, which commutes with $x_{\bmu^{*}}u_{\bmu^{*}}^+$, such that
\begin{eqnarray*}
\sum_{\substack{\fs\in\mathrm{std}(\bgl)\\\bmu|\bnu(\fs)=\mathrm{S}}}
\widetilde{T}_{d(\fs_{\bnu})}^* y_{\bgb_{*}}u_{\bgb_{*}}^-\widetilde{T}_{d(\ft_{\bnu})}
  &=&y_{\bnu_*}u_{\bnu_*}^-h'
\end{eqnarray*}
Now by  Definitions~\ref{Def:St} and \ref{Def:x-y-lambda}, we have
\begin{eqnarray*}
  m_{\mathrm{S}\ft}&=&
  \sum_{\substack{\fs\in\mathrm{std}(\bgl)\\\bmu|\bnu(\fs)=\mathrm{S}}}m_{\fs_{\bmu}\ft_{\bmu}}
  \sum_{\substack{\fs\in\mathrm{std}(\bgl)\\\bmu|\bnu(\fs)=\mathrm{S}}}n_{\fs_{\bnu}\ft_{\bnu}}\\
  &=&\sum_{\substack{\fs\in\mathrm{std}(\bgl)\\\bmu|\bnu(\fs)=\mathrm{S}}}
  T^*_{d(\fs_{\bmu})}x_{\bga^*}u_{\bga^*}^+T_{d(\ft_{\bmu})}
  \sum_{\substack{\fs\in\mathrm{std}(\bgl)\\\bmu|\bnu(\fs)=\mathrm{S}}}
  \widetilde{T}^*_{d(\fs_{\bnu})}y_{\bgb_{*}}u_{\bgb_{*}}^-\widetilde{T}_{d(\ft_{\bnu})}\\
   &=&(x_{\bmu^{*}}u_{\bmu^{*}}^+h) (y_{\bnu_{*}}u_{\bnu_{*}}^-h')\\
   &=&(x_{\bmu^{*}}u_{\bmu^{*}}^+)(y_{\bnu_{*}}u_{\bnu_{*}}^-)hh'.
 \end{eqnarray*}
Thus we prove that $m_{\mathrm{S}\ft}\in M_{\bmu|\bnu}$.
\end{proof}

For $h\in H$, we let $h=\sum_{\fs,\ft}a_{\fs\ft}m_{\fs\ft}=\sum_{\fs,\ft}b_{\fs\ft}n_{\fs\ft}$, where  $a_{\fs\ft}, b_{\fs\ft}\in R$, be the unique expression for $h$ in terms of standard basis owing to Theorem~\ref{Them:mn-basis}. We say that $m_{\fs\ft}$ (resp. $n_{\fs\ft}$) is \textit{involved} in $h$ if $a_{\fs\ft}\neq 0$ (resp. $b_{\fs\ft}\neq 0$).
 \begin{corollary}\label{Cor:basis}
   For $\bmu|\bnu\in \mathcal{C}(\bk|\bl;n)$, $u^+_{\bmu^*}H\cap  x_{\bmu^*}H\cap u^-_{\bnu_*}H\cap y_{\bnu_*}H$ is spanned by
   \begin{equation*}
     \left\{m_{\mathrm{S}\ft}\left|\mathrm{S}\in \std(\bgl,\bmu|\bnu), \ft\in\mathrm{std}(\bgl)\text{ for some }\bgl\in H(\bk|\bl;n)\right.\right\}
   \end{equation*}
   In particular, $M_{\bmu|\bnu}=u^+_{\bmu^*}H\cap  x_{\bmu^*}H\cap u^-_{\bnu_*}H\cap y_{\bnu_*}H$.
 \end{corollary}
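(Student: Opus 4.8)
The plan is to prove Corollary~\ref{Cor:basis} by combining Lemma~\ref{Lemm-basis} with a dimension/spanning argument that mirrors the strategy used by Dipper--James--Mathas for the classical cyclotomic $q$-Schur algebra. First I would record the two inclusions separately. For the ``easy'' direction, Lemma~\ref{Lemm-basis} already shows that each $m_{\mathrm{S}\ft}$ lies in $M^{\bmu|\bnu}=m_{\bmu|\bnu}H=x_{\bmu}u^+_{\bmu}y_{\bnu}u^-_{\bnu}H$; since $m_{\bmu|\bnu}h=x_{\bmu}u^+_{\bmu}y_{\bnu}u^-_{\bnu}h$ is manifestly a right multiple of each of $u^+_{\bmu}$, $x_{\bmu}$, $u^-_{\bnu}$ and $y_{\bnu}$ (using the commutativity relations $x_{\bmu}u^+_{\bmu}=u^+_{\bmu}x_{\bmu}$ and $y_{\bnu}u^-_{\bnu}=u^-_{\bnu}y_{\bnu}$ noted after Definition~\ref{Def:x-y-lambda}, together with the fact that the $\bmu^*$- and $\bnu_*$-factors commute), every $m_{\mathrm{S}\ft}$ belongs to the intersection $u^+_{\bmu}H\cap x_{\bmu}H\cap u^-_{\bnu}H\cap y_{\bnu}H$. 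This gives that the claimed spanning set is contained in the intersection.

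The substantive direction is to show that the intersection is \emph{spanned} by the $m_{\mathrm{S}\ft}$. Here I would take an arbitrary $h$ in the intersection and expand it in the cellular basis $\mathcal{M}$ of $H$ from Theorem~\ref{Them:mn-basis}, writing $h=\sum a_{\fu\fv}m_{\fu\fv}$. The key step is a straightening/triangularity analysis: membership of $h$ in $x_{\bmu}H$ (equivalently $M^{\bmu}$) forces, via the classical argument of \cite[Lemma~4.10 and its proof]{DJM}, that every $m_{\fu\fv}$ involved in $h$ has its ``$\bx$-part'' $\fu_{\bmu}$ semistandard of type $\bmu$ after applying the map $\bmu(-)$; dually, membership in $y_{\bnu}H$ (equivalently $M^{\bnu}$, detected using the $\mathcal{N}$-basis and the element $b_{\fu\fv}$ of the definition of \emph{involved}) forces the ``$\by$-part'' $\fv_{\bnu}$ to be column-semistandard of type $\bnu$, while the factors $u^+_{\bmu}$ and $u^-_{\bnu}$ enforce the color constraints built into the definition of $\std(\bgl,\bmu|\bnu)$. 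Grouping the surviving terms according to the common semistandard tableau $\mathrm{S}=\bmu|\bnu(\fs)$ they determine, one recognizes the sum over $\{\fs:\bmu|\bnu(\fs)=\mathrm{S}\}$ as precisely $m_{\mathrm{S}\ft}$ from Definition~\ref{Def:St}, so $h$ is an $R$-combination of the $m_{\mathrm{S}\ft}$.

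For the final assertion $M^{\bmu|\bnu}=u^+_{\bmu}x_{\bmu}H\cap u^-_{\bnu}y_{\bnu}H$, I would first observe the trivial inclusion $\subseteq$, since $m_{\bmu|\bnu}=x_{\bmu}u^+_{\bmu}\cdot y_{\bnu}u^-_{\bnu}$ lies in both $u^+_{\bmu}x_{\bmu}H$ and $u^-_{\bnu}y_{\bnu}H$. For the reverse inclusion I would note that $u^+_{\bmu}x_{\bmu}H\cap u^-_{\bnu}y_{\bnu}H\subseteq u^+_{\bmu}H\cap x_{\bmu}H\cap u^-_{\bnu}H\cap y_{\bnu}H$, so by the first part this intersection is spanned by the $m_{\mathrm{S}\ft}$, each of which lies in $M^{\bmu|\bnu}$ by Lemma~\ref{Lemm-basis}; hence the two intersections coincide with $M^{\bmu|\bnu}$ and the span of the $m_{\mathrm{S}\ft}$ simultaneously.

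I expect the main obstacle to be the spanning direction, specifically making the triangularity argument bookkeeping fully rigorous in the super setting: one must verify that the interaction between the commuting symmetric and antisymmetric factors $x_{\bmu^*}$ and $y_{\bnu_*}$ does not destroy the separate $\bx$- and $\by$-straightening, and that the color condition coming from $u^+_{\bmu}u^-_{\bnu}$ exactly matches the combinatorial definition of $(\bk,\bl)$-semistandardness. This amounts to checking that the classical DJM straightening applies componentwise to $\fu_{\bmu}$ and $\fv_{\bnu}$ independently, which is where the results of Section~\ref{Sec:super-tableaux}---in particular Corollary~\ref{Cor:fisrt-last} on the existence of $\mathrm{first}(\mathrm{S})$ and $\mathrm{last}(\mathrm{S})$---are invoked to control the ordering of the tableaux and guarantee that the grouping into the $m_{\mathrm{S}\ft}$ is consistent.
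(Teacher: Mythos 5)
Your overall architecture matches the paper's: the easy inclusion (each $m_{\mathrm{S}\ft}$ lies in the four-fold intersection, via Lemma~\ref{Lemm-basis} and the commutation of the $x$-, $y$- and $u^{\pm}$-factors) and the final assertion $M^{\bmu|\bnu}=u^+_{\bmu}x_{\bmu}H\cap u^-_{\bnu}y_{\bnu}H$ (deduced from the spanning statement together with Lemma~\ref{Lemm-basis}) are exactly what the paper does. The gap is in your central spanning step. You claim that membership of $h$ in $x_{\bmu}H$ (with $u^+_{\bmu}$) forces \emph{every} term involved in $h$ to have semistandard $\bx$-part, dually for the $\by$-part detected in the $\mathcal{N}$-basis, and that the surviving terms then regroup, with matching coefficients, into the elements $m_{\mathrm{S}\ft}$ of Definition~\ref{Def:St}. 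But these two pieces of information live in two \emph{different} bases of $H$: the $\bx$-straightening is read off in the $\mathcal{M}$-expansion and the $\by$-straightening in the $\mathcal{N}$-expansion of Theorem~\ref{Them:mn-basis}, whereas the target elements $m_{\mathrm{S}\ft}=\sum m_{\fs_{\bmu}\ft_{\bmu}}n_{\fs_{\bnu}\ft_{\bnu}}$ are \emph{mixed} products belonging to neither. Nothing in your argument bridges the two expansions, and the statement you actually need --- that in the mixed expansion the coefficients are constant on the fibres $\{\fs\mid \bmu|\bnu(\fs)=\mathrm{S}\}$ --- is precisely the freeness assertion of Theorem~\ref{Them:M-basis}, which in the paper is \emph{deduced from} this corollary; invoking it here (rather than its one-factor classical antecedents in \cite{DJM}) would be circular. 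Also, the classical input you name, \cite[Lemma~4.10]{DJM}, is the membership statement underlying Lemma~\ref{Lemm-basis}; the straightening tool is \cite[Lemma~4.11]{DJM}.

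What the paper proves is weaker and suffices: following \cite[Lemma~4.11]{DJM}, every nonzero $h$ in the intersection \emph{involves at least one} mixed term $m_{\fs_{\bmu}\ft_{\bmu}}n_{\fs_{\bnu}\ft_{\bnu}}$ with $\fs\in\std_{\mathrm{last}}(\bgl,\bmu|\bnu)$, where $\mathrm{last}(\mathrm{S})$ is the extremal preimage furnished by Corollary~\ref{Cor:fisrt-last}; one then subtracts the corresponding combination $\sum r_{\fs\ft}\,m_{\mathrm{S}\ft}$, observes that the difference still lies in the intersection yet involves no last-terms, and concludes it is zero. So the missing idea in your write-up is this leading-term-and-strip mechanism (the super analogue of DJM's Lemma~4.11). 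You correctly flagged Corollary~\ref{Cor:fisrt-last} as the relevant combinatorial control, but its role is to single out the distinguished preimage $\mathrm{last}(\mathrm{S})$ whose coefficient can be eliminated by subtraction --- not to license a direct ``all involved terms are semistandard and group consistently'' claim, which as stated either fails or begs the question.
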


\begin{proof}  For each multipartition $\bgl$ let
\begin{equation*}
  \std_{\mathrm{last}}(\bgl,\bmu|\bnu)=\{\fs\in\mathrm{std}(\bgl)|\bmu|\bnu(\fs)=\mathrm{last}(\mathrm{S})
  \text{ for some }\mathrm{S}\in \std(\bgl,\bmu|\bnu)\}.
\end{equation*}
Thanks to \cite[Lemma~4.11]{DJM}, every non-zero element  $h\in u_{\bmu^*}^+H\cap x_{\bmu^*}H$ involves a standard basis element $m_{\fs_{\bmu}\ft_{\bmu}}$ with $\fs\in \std_{\mathrm{last}}(\bgl,\bmu|\bnu)$ and $\ft\in\std(\bgl)$ for some multipartition $\bgl$. So there is a non-zero element of $u^-_{\bnu_*}H\cap y_{\bnu_*}H$ involves an element $n_{\fs_{\bnu}\ft_{\bnu}}$.

Now suppose that $h\in u^+_{\bmu^*}H\cap  x_{\bmu^*}H\cap u^-_{\bnu_*}H\cap y_{\bnu_*}H$ and $h=\sum a_{\fa\fb,\fu\fv}m_{\fa\fb}n_{\fu\fv}$ with $a_{\fa\fb,\fu\fv}\in R$. Let
 \begin{equation*}
   \tilde{h}=h-\sum_{\bgl}\sum_{\fs\in\mathcal{T}_{\mathrm{last}}(\bgl,\bmu|\bnu)}
   \sum_{\ft\in\mathrm{std}(\bgl)}r_{\fs\ft}m_{\fs_{\bmu}\ft_{\bmu}}n_{\fs_{\bnu}\ft_{\bnu}}.
 \end{equation*}
 Then $\tilde{h}\in u^+_{\bmu^*}H\cap  x_{\bmu^*}H\cap u^-_{\bnu_*}H\cap y_{\bnu_*}H$, while $\tilde{h}$ does not involved any term $m_{\fs_{\bmu}\ft_{\bmu}}n_{\fs_{\bnu}\ft_{\bnu}}$ for any $\fs\in \mathcal{T}_{\mathrm{last}}(\bgl,\bmu|\bnu)$. Therefore, $\tilde{h}=0$ and the first assertion follows.

Lemma~\ref{Lemm-basis} shows $m_{\mathrm{S}\ft}\in M_{\bmu|\bnu}$. Note that
\begin{equation*}
  M_{\bmu|\bnu}=u^+_{\bmu^*}x_{\bmu^*}u^-_{\bnu_*}y_{\bnu_*}H\subset u^+_{\bmu^*}H\cap  x_{\bmu^*}H\cap u^-_{\bnu_*}H\cap y_{\bnu_*}H.
\end{equation*}
 As a consequence, we yield $M_{\bmu|\bnu}=u^+_{\bmu^*}H\cap  x_{\bmu^*}H\cap u^-_{\bnu_*}H\cap y_{\bnu_*}H$.
\end{proof}

Let us remark that \cite[Corollary~4.13]{DJM} proves $u^+_{\bmu^*}x_{\bmu^*}H=u^+_{\bmu^*}H\cap  x_{\bmu^*}H$. Similarly we have $u^-_{\bnu_*}y_{\bnu_*}H=u^-_{\bnu_*}H\cap y_{\bnu_*}H$. Thus $M_{\bmu|\bnu}=u^+_{\bmu^*}x_{\bmu^*}H\cap u^-_{\bnu_*}y_{\bnu_*}H=M_{\bmu^*}\cap M_{\bnu_*}$ where $M_{\bmu^*}=m_{\bmu^*}H$ and $M_{\bnu_*}=m_{\bnu_*}H$.

Due to \cite[Proposition~3.22]{DJM}, for a multipartition  $\bgl$ of $n$, the $R$-module
\begin{equation*}
  N^{\rhd\bgl}:=\mathrm{Span}_{R}\left\{m_{\fa\fb}|\fa,\fb\in\std(\bga) \text{ for multipartition $\bga$ of $n$ with } \bga\rhd \bgl\right\}
\end{equation*}
is a two-sided ideal of $H$.

Further we let $z_{\bgl}=(N^{\rhd\bgl}+m_{\bgl})/N^{\rhd\bgl}$. The {\it Specht module} $S^{\bgl}$ is the submodule of $H/N^{\rhd\bgl}$ generated by $z_{\bgl}$, which is a free $R$-module with basis $\{z_{\bgl}T_{d(\ft)}|\ft\in\std(\bgl)\}$.

The next fact is a super-version of \cite[Theorem~4.14 and Corollary~4.15]{DJM} or a cyclotomic analogue of \cite[Proposition~5.2]{DR}.
 \begin{theorem}\label{Them:M-basis}Given $\bmu|\bnu\in\mathcal{C}(\bk|\bl;n)$,
   $M_{\bmu|\bnu}$ is a free $R$-module with basis
   \begin{equation*}
     \left\{m_{\mathrm{S}\ft}|\mathrm{S}\in \std(\bgl,\bmu|\bnu),\ft\in\std(\bgl) \text{ for some }\bgl\in\mpn \right\}.
   \end{equation*}
   Furthermore, there exists a filtration of $M_{\bmu|\bnu}$
   \begin{equation*}
     M_{\bmu|\bnu}=M_1>M_2>\cdots>M_{r+1}=0
   \end{equation*}
   such that for each $i$ with $1\leq i\leq r$, $M_{i}/M_{i+1}\cong S^{\bgl_i}$ for some $\bgl_i\vdash n$. Moreover, if $\bgl\in H(\bk|\bl;n)$ then the cardinality of $S^{\bgl}$ in $M_{\bmu|\bnu}$ is $\left|\std(\bgl,\bmu|\bnu)\right|$.
 \end{theorem}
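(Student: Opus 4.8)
The plan is to prove the two assertions of Theorem~\ref{Them:M-basis} in sequence, leaning heavily on the cellular structure of $H$ provided by Theorem~\ref{Them:mn-basis} and on the spanning result already obtained in Corollary~\ref{Cor:basis}. First I would establish that the proposed set spans $M^{\bmu|\bnu}$. By Corollary~\ref{Cor:basis} we already know that $M^{\bmu|\bnu}=u^+_{\bmu}x_{\bmu}H\cap u^-_{\bnu}y_{\bnu}H$ is spanned by the elements $m_{\mathrm{S}\ft}$ with $\mathrm{S}\in\std(\bgl,\bmu|\bnu)$, $\ft\in\std(\bgl)$, ranging over $\bgl\in H(\bk|\bl;n)$. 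Since $\std(\bgl,\bmu|\bnu)=\emptyset$ unless $\bgl\in H(\bk|\bl;n)$, enlarging the index set to all $\bgl\in\mpn$ contributes no new elements, so the spanning statement is immediate.

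The main work is linear independence. My approach would be to exploit the unitriangularity built into the Definition~\ref{Def:St} of $m_{\mathrm{S}\ft}$ together with the dominance order on tableaux. Each $m_{\mathrm{S}\ft}=\sum_{\fs:\,\bmu|\bnu(\fs)=\mathrm{S}}m_{\fs_{\bmu}\ft_{\bmu}}n_{\fs_{\bnu}\ft_{\bnu}}$ is a sum of standard-basis products indexed by the standard tableaux $\fs$ mapping to $\mathrm{S}$, and by Corollary~\ref{Cor:fisrt-last} this set has a well-defined dominance-maximal member $\mathrm{first}(\mathrm{S})$ and minimal member $\mathrm{last}(\mathrm{S})$. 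The key point, which I would extract from the arguments of \cite[Lemma~4.11]{DJM}, is that the leading term of $m_{\mathrm{S}\ft}$ with respect to the order on pairs of tableaux is precisely the term indexed by $\mathrm{last}(\mathrm{S})$ (or $\mathrm{first}(\mathrm{S})$, depending on the convention); distinct $(\mathrm{S},\ft)$ give distinct leading terms because the map $\mathrm{S}\mapsto\mathrm{last}(\mathrm{S})$ is injective and recovers $\ft$ from $\ft_{\bmu},\ft_{\bnu}$ via Remark~\ref{Remark:tableau-grading}. Since the $m_{\fs\ft}$ (equivalently the mixed products $m_{\fs_{\bmu}\ft_{\bmu}}n_{\fs_{\bnu}\ft_{\bnu}}$) are part of a basis of $H$ by Theorem~\ref{Them:mn-basis}, a standard triangularity argument then forces any linear dependence among the $m_{\mathrm{S}\ft}$ to be trivial, establishing freeness with the asserted basis.

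Next I would construct the Specht filtration. The indexing set $\{(\mathrm{S},\ft)\}$ is naturally graded by the shape $\bgl$, and I would order the multipartitions occurring by the dominance order $\unrhd$ on $\mpn$, refining to a total order. For each $\bgl$ let $M_{\unrhd\bgl}$ be the span of all $m_{\mathrm{S}\ft}$ whose shape dominates $\bgl$; the cellularity of $H$ (the product $m_{\fs_{\bmu}\ft_{\bmu}}n_{\fs_{\bnu}\ft_{\bnu}}$ respecting the cellular order on shapes) guarantees that right multiplication by $H$ preserves these spans, so the $M_{\unrhd\bgl}$ form a chain of $H$-submodules. Reindexing this chain as $M^{\bmu|\bnu}=M_1>M_2>\cdots>M_{r+1}=0$, I would identify each successive quotient $M_i/M_{i+1}$ with the Specht module $S^{\bgl_i}$: fixing $\mathrm{S}$ and letting $\ft$ vary produces, modulo higher terms, exactly a copy of the cell module on $\bgl_i$, since the $\ft$-dependence of $m_{\mathrm{S}\ft}$ mirrors that of the cellular basis. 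Counting these copies for $\bgl\in H(\bk|\bl;n)$ gives multiplicity $|\std(\bgl,\bmu|\bnu)|$, the number of admissible $\mathrm{S}$.

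The hard part will be the triangularity step underlying linear independence: one must verify that the leading term of the sum defining $m_{\mathrm{S}\ft}$ does not cancel against contributions from other $\mathrm{S}'$, and that the extremal tableau $\mathrm{last}(\mathrm{S})$ genuinely determines $\mathrm{S}$. This requires carefully tracking how the superstructure (the interleaving of the row-semistandard $\bx$-part with the column-semistandard $\by$-part, and the color condition in the definition of $\std(\bgl,\bmu|\bnu)$) interacts with the dominance order, so that the non-super estimates of \cite[Lemmas~4.10 and 4.11]{DJM} can be applied separately to the $m$-part and the $n$-part and then recombined; the commutation $x_{\bmu^*}u^+\cdot y_{\bnu_*}u^-=u^+x\,u^-y$ established before Corollary~\ref{Cor:basis} is what makes this recombination legitimate.
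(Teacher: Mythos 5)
Your proposal is correct and follows essentially the same route as the paper: the paper derives the basis statement from Lemma~\ref{Lemm-basis} and Corollary~\ref{Cor:basis} (whose proof contains precisely the last-tableau triangularity you describe, imported from \cite[Lemmas~4.10 and 4.11]{DJM}), and it obtains the Specht filtration by the argument of \cite[Corollary~4.15]{DJM}, i.e.\ via the dominance-ordered chain of spans you sketch, with the multiplicity count $|\std(\bgl,\bmu|\bnu)|$ arising exactly as you say. The only point to watch is your parenthetical claim that the mixed products $m_{\fs_{\bmu}\ft_{\bmu}}n_{\fs_{\bnu}\ft_{\bnu}}$ are themselves ``part of a basis'' by Theorem~\ref{Them:mn-basis} --- that theorem provides two separate cellular bases, not a basis of such products --- but your final paragraph already supplies the correct repair (applying the DJM estimates factorwise to the $m$-part and $n$-part and recombining via the commutation relations), which is exactly how the paper's Corollary~\ref{Cor:basis} handles it.
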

\begin{proof}The first part of the theorem follows by applying Lemma~\ref{Lemm-basis} and Corollary~\ref{Cor:basis}. For the second part, we choose an ordering $\mathrm{S}_1>\mathrm{S}_2>\cdots>\mathrm{S}_r$ on the set of $(\bk,\bl)$-semistandard tableaux of type $\bmu|\bnu$ such that $j>i$ if $\bgl_i\rhd \bgl_j$ where $\mathrm{S}_i\in \std(\bgl_i, \bmu|\bnu)$ and $\mathrm{S}_j\in \std(\bgl_j, \bmu|\bnu)$. Let $M_i$ be the $R$-submodule of $M_{\bmu|\bnu}$ with basis $\{m_{\mathrm{S}_j\ft}|j\geq i,\ft\in\std(\bgl_j)\}$. Then
\begin{equation*}
     M_{\bmu|\bnu}=M_1>M_2>\cdots>M_{r+1}=0
   \end{equation*}
and each $M_i$ is a right ideal due to Theorem~\ref{Them:mn-basis}.

For $i=1, \ldots, r$, we have $M_i\cap N^{\rhd\bgl_i}\subseteq M_{i+1}$. So there is a well-defined $H$-homomorphism
\begin{equation*}
  \theta: S^{\bgl_i}\rightarrow M_i/M_{i+1}, \quad z_{\bgl_i}\mapsto m_{\mathrm{S}_{i}\ft^{\bgl_i}}+M_{i+1}.
\end{equation*}
Since both $S^{\bgl_i}$ and  $M_i/M_{i+1}$ have the rank equal to the number of standard $\bgl_i$-tableaux, $\theta$ is an isomorphism. It completes the proof.
\end{proof}

\section{The Cyclotomic $q$-Schur superalgebra}\label{Sec:Cyclotomic-Schur-superalgebras}
This section devotes to introduce the  cyclotomi $q$-Schur superalgebra and show it is a cellular (super)algebra based on the description of homomorphisms between permutation supermodules.
\begin{definition}\label{Def:Cyclotomi-Schur}
The \textit{cyclotomic $q$-Schur superalgebra} is the endomorphism algebra
\begin{equation*}
  \mathscr{S}(\bk|\bl;n):=\End_{H}\biggr(\bigoplus_{\bmu|\bnu\in \mathcal{C}(\bk|\bl;n)}m_{\bmu|\bnu}H\biggl)
\end{equation*}
with $\mathbb{Z}_2$-grading defined by setting
 \begin{equation*}
   \mathscr{S}(\bk|\bl;n)_i=\bigoplus_{|\bnu|+|\bgb|\equiv i(\text{mod}\,2)}\mathrm{Hom}_{H}( m_{\bmu|\bnu}H, m_{\bga|\bgb}H), \quad i=0,1.
 \end{equation*}
 \end{definition}

\begin{remark}\begin{enumerate}
                \item[(i)]Clearly $\mathscr{S}(\bk|\bl;n)\cong \displaystyle\bigoplus_{\bmu|\bnu,\bga|\bgb\in \mathcal{C}(\bk|\bl;n)}\mathrm{Hom}_{H}(M_{\bmu|\bnu}, M_{\bga|\bgb})$ and $\mathscr{S}(\bk|\bl;n)$ dependents only upon the set $\mathcal{P}(\bk|\bl;n)$ (and not on $\mathcal{C}(\bk|\bl;n)$) up to Morita equivalence.
                \item[(ii)]Given a saturated subset $\Lambda$ of $\mathcal{C}(\bk|\bl;n)$, that is, $\Lambda$ is finite and whenever $\bgl$ is a multipartition such that $\bgl\unrhd \bmu$ for some $\bmu\in\Lambda$ then $\bgl\in\Lambda$,  we may define the cyclotomic $q$-Schur superalgebra associated to $\Lambda$ as following
        \begin{equation*}
  \mathscr{S}(\bk|\bl;\Lambda):=\End_{H}\biggr(\bigoplus_{\bmu|\bnu\in \Lambda}m_{\bmu|\bnu}H\biggl)
\end{equation*}
with $\mathbb{Z}_2$-grading defined by setting
 \begin{equation*}
   \mathscr{S}(\bk|\bl;\Lambda)_i=\bigoplus_{|\bnu|+|\bgb|\equiv i(\text{mod}\,2)}\mathrm{Hom}_{H}( m_{\bmu|\bnu}H, m_{\bga|\bgb}H), \quad i=0,1.
 \end{equation*}
\item[(iii)] Clearly $\mathscr{S}(\bk|\mathbf{0};n)$ is the Dipper-James-Mathas' cyclotomic $q$-Schur algebra in \cite{DJM}.

\item[(iv)]  $\mathscr{S}(\bk|\bl;n)$ is the Du-Rui's quantum Schur superalgebra in \cite{DR} when $m=1$.
              \end{enumerate}\end{remark}

Now we discuss the homomorphisms between permutation supermodules, which enable us to give a cellular basis of the cyclotomic $q$-Schur superalgebra.
\begin{definition}
   Suppose $S$ is a subset of $H$ and define
      \begin{equation*}
     \mathrm{R}(S)=\{h\in H|Sh=0\}\text{ and }\mathrm{L}(S)=\{h\in H|hS=0\}.\end{equation*}
   The \textit{double annihilator} of $S$ is $\mathrm{LR}(S)=\mathrm{L}(\mathrm{R}(S))$.
 \end{definition}

 It is easy to see that if $\bmu$  be a multicomposition  of $n$ then
 \begin{equation*}
  Hu_{\bmu}^+x_{\bmu}\subseteq \mathrm{LR}(u_{\bmu}^+x_{\bmu}) \text{  and  } \mathrm{LR}(u_{\bmu}^+x_{\bmu})\subseteq \mathrm{LR}(u_{\bmu}^+)\cap \mathrm{LR}(x_{\bmu}).
 \end{equation*}
Furthermore, we have the following fact.

 \begin{lemma}\label{Lemm:ann-y}
   Let $\bmu$  be a multicomposition  of $n$. Then
   \begin{eqnarray*}
   \mathrm{LR}(u_{\bmu}^+x_{\bmu})=Hu_{\bmu}^+x_{\bmu}&\text{ and } & \mathrm{LR}(u_{\bmu}^-y_{\bmu})=Hu_{\bmu}^-y_{\bmu}.
   \end{eqnarray*}
   \end{lemma}
\begin{proof}
  The first assertion is exactly \cite[Theorem~5.16]{DJM}. The second assertion follows by applying  a similar argument as the one of \cite[Theorem~5.16]{DJM}.
\end{proof}

Now we can determine the double annihilators of permutation supermodules.

 \begin{theorem}\label{Them:ann}
   Assume that $\bmu|\bnu\in \mathcal{C}(\bk|\bl;n)$. Then $\mathrm{LR}(m_{\bmu|\bnu})=Hm_{\bmu|\bnu}$.
 \end{theorem}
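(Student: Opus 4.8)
The plan is to reduce the statement about the mixed element $m_{\bmu|\bnu}=m_{\bmu^*}n_{\bnu_*}=x_{\bmu}u^+_{\bmu}\,y_{\bnu}u^-_{\bnu}$ to the purely antisymmetric case already handled in Lemma~\ref{Lemm:ann-y}, using the factorization $\mathfrak{S}_{\bmu|\bnu}=\mathfrak{S}_{\bmu^*}\mathfrak{S}_{\bnu_*}$ and the commutation $x_{\bmu^*}y_{\bnu_*}=y_{\bnu_*}x_{\bmu^*}$ recorded in Section~\ref{Sec:Permutation-supermodules}. One inclusion is formal: since $m_{\bmu|\bnu}\in Hm_{\bmu|\bnu}$ and $Hm_{\bmu|\bnu}$ annihilates $r(m_{\bmu|\bnu})$ on the left, we always have $Hm_{\bmu|\bnu}\subseteq lr(m_{\bmu|\bnu})$. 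So the real content is the reverse inclusion $lr(m_{\bmu|\bnu})\subseteq Hm_{\bmu|\bnu}$.

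For the reverse inclusion, I would first establish the symmetric companion of Lemma~\ref{Lemm:ann-y}, namely $lr(x_{\bmu}u^+_{\bmu})=Hx_{\bmu}u^+_{\bmu}$; this is the symmetric (``$m$-type'') analogue and follows by the same argument as \cite[Theorem~5.16]{DJM} applied to the $x$-side rather than the $y$-side. The key structural observation is that $m_{\bmu|\bnu}$ is a product of two commuting factors supported on complementary sets of indices: $x_{\bmu}u^+_{\bmu}$ lives on the $\bmu$-symbols while $y_{\bnu}u^-_{\bnu}$ lives on the $\bnu$-symbols, and $\mathfrak{S}_{\bmu^*}$ and $\mathfrak{S}_{\bnu_*}$ commute elementwise. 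I would then argue that the right ideal $r(m_{\bmu|\bnu})$ decomposes compatibly: an element $h$ satisfies $m_{\bmu|\bnu}h=0$ precisely when the interaction of the two factors forces $h$ into the intersection of the individual right annihilators (up to the coset structure), so that $r(m_{\bmu|\bnu})$ can be described in terms of $r(x_{\bmu}u^+_{\bmu})$ and $r(y_{\bnu}u^-_{\bnu})$.

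The cleanest route is to compute $lr$ directly by combining the two one-sided results. Using Corollary~\ref{Cor:basis}, which identifies $M^{\bmu|\bnu}=x_{\bmu}u^+_{\bmu}H\cap y_{\bnu}u^-_{\bnu}H$ and gives its explicit semistandard basis, together with the basis of $M^{\bmu|\bnu}$ from Theorem~\ref{Them:M-basis}, I can pin down $r(m_{\bmu|\bnu})$ as a concrete $R$-submodule. The strategy is then to show
\begin{equation*}
  lr(m_{\bmu|\bnu})=lr(x_{\bmu}u^+_{\bmu})\cap lr(y_{\bnu}u^-_{\bnu})=Hx_{\bmu}u^+_{\bmu}\cap Hy_{\bnu}u^-_{\bnu},
\end{equation*}
where the second equality uses Lemma~\ref{Lemm:ann-y} and its symmetric companion. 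Finally I would identify this intersection with $Hm_{\bmu|\bnu}$ by a dimension or basis count: applying the anti-automorphism $*$ turns the left-ideal intersection into the right-ideal intersection $x_{\bmu}u^+_{\bmu}H\cap y_{\bnu}u^-_{\bnu}H=M^{\bmu|\bnu}$ of Corollary~\ref{Cor:basis}, whose rank is $\sum_{\bgl}|\std(\bgl,\bmu|\bnu)|\cdot|\std(\bgl)|$, and match it against the rank of $Hm_{\bmu|\bnu}=(m_{\bmu|\bnu}H)^*$.

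I expect the main obstacle to be justifying that the double annihilator of the product splits as the intersection of the double annihilators of the two commuting factors, i.e. the middle equality above. This is not automatic for general elements, and it is where the specific structure of the Jucys--Murphy polynomials $u^{\pm}_{\bmu|\bnu}$ and the separation of colors must be used; concretely I would verify it by showing $r(m_{\bmu|\bnu})=r(x_{\bmu}u^+_{\bmu})+r(y_{\bnu}u^-_{\bnu})$ at the level of right ideals (so that taking left annihilators converts the sum into an intersection), relying on the commuting factorization and on the fact, implicit in Corollary~\ref{Cor:basis}, that the two permutation-module conditions cut out $M^{\bmu|\bnu}$ independently. Once this splitting is in place, Lemma~\ref{Lemm:ann-y} and its symmetric version close the argument.
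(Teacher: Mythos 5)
Your skeleton is the paper's: sandwich $lr(m_{\bmu|\bnu})$ between $Hm_{\bmu|\bnu}$ and $Hm_{\bmu^*}\cap Hn_{\bnu_*}$, then identify the outer terms via Corollary~\ref{Cor:basis} and the anti-automorphism $*$. (One economy you missed: your ``symmetric companion'' $lr(m_{\bmu^*})=Hm_{\bmu^*}$ needs no new proof --- it is literally \cite[Theorem~5.16]{DJM}; it is Lemma~\ref{Lemm:ann-y} that is the new, $y$-side mirror.) But the step you single out as the main obstacle --- the genuine equality $lr(m_{\bmu|\bnu})=lr(m_{\bmu^*})\cap lr(n_{\bnu_*})$, to be proved via the right-ideal splitting $r(m_{\bmu|\bnu})=r(m_{\bmu^*})+r(n_{\bnu_*})$ --- is a detour you neither need nor should attempt: that splitting is not established anywhere and is far stronger than what the argument requires. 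Only the inclusion $lr(m_{\bmu|\bnu})\subseteq lr(m_{\bmu^*})\cap lr(n_{\bnu_*})$ is needed, and it is formal from the commuting factorization: if $m_{\bmu^*}h=0$ then $m_{\bmu|\bnu}h=n_{\bnu_*}m_{\bmu^*}h=0$, so $r(m_{\bmu^*})\subseteq r(m_{\bmu|\bnu})$, hence $lr(m_{\bmu|\bnu})\subseteq lr(m_{\bmu^*})$, and likewise for the $n_{\bnu_*}$-factor. The chain
\begin{equation*}
Hm_{\bmu|\bnu}\;\subseteq\; lr(m_{\bmu|\bnu})\;\subseteq\; Hm_{\bmu^*}\cap Hn_{\bnu_*}\;=\;\bigl(m_{\bmu^*}H\cap n_{\bnu_*}H\bigr)^*\;=\;\bigl(m_{\bmu|\bnu}H\bigr)^*\;=\;Hm_{\bmu|\bnu}
\end{equation*}
then collapses, forcing equality everywhere; here the second inclusion uses \cite[Theorem~5.16]{DJM} and Lemma~\ref{Lemm:ann-y}, the first equality uses $m_{\bmu^*}^*=m_{\bmu^*}$, $n_{\bnu_*}^*=n_{\bnu_*}$ and bijectivity of $*$, and the middle equality is exactly the set-level identity $m_{\bmu|\bnu}H=M^{\bmu|\bnu}=m_{\bmu^*}H\cap n_{\bnu_*}H$ of Corollary~\ref{Cor:basis}. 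This is the paper's proof verbatim.

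A second caution: your fallback of ``a dimension or basis count'' is not a valid substitute over the integral domain $R$. A containment of free $R$-modules of equal rank need not be an equality (compare $2\mathbb{Z}\subset\mathbb{Z}$), so matching the rank of $Hm_{\bmu^*}\cap Hn_{\bnu_*}$ against that of $Hm_{\bmu|\bnu}$ proves nothing by itself; you must use the set-level identification from Corollary~\ref{Cor:basis} transported through $*$, as above. With the unproved splitting deleted and the rank count replaced by this identification, your proposal is correct and coincides with the paper's argument.
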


\begin{proof}By applying Lemma~\ref{Lemm:ann-y}, we get
\begin{eqnarray*}
  Hm_{\bmu|\bnu}&=&Hu_{\bmu^*}^+x_{\bmu^*}u_{\bnu_*}^-y_{\bnu_*}\subseteq \mathrm{LR}(m_{\bmu|\bnu})\\
  &\subseteq& \mathrm{LR}(m_{\bmu^*})\cap \mathrm{LR}(n_{\bnu_*})\\
  &=&Hm_{\bmu^*}\cap Hn_{\bnu_*}.
\end{eqnarray*}
While Corollary~\ref{Cor:basis} implies $(m_{\bmu^*}H\cap n_{\bnu_*}H)^*=(m_{\bmu|\bnu}H)^*$. Hence the equality holds and  $\mathrm{LR}(m_{\bmu|\bnu})=Hm_{\bmu|\bnu}$.
\end{proof}

Now the homomorphisms between permutation supermodules can be completely determined by the following
 \begin{corollary}\label{Cor:Hom}
   Assume that $\bmu|\bnu, \bga|\bgb\in \mathcal{C}(\bk|\bl;n)$. Then
   \begin{enumerate}
     \item[(i)] For every $\varphi\in \mathrm{Hom}_{H}(M_{\bmu|\bnu}, M_{\bga|\bgb})$ there exists  $h_{\varphi}\in H$ such that $\varphi(m_{\bmu|\bnu})=h_{\varphi}m_{\bmu|\bnu}$; in particular, $\varphi(m_{\bmu|\bnu})\in M_{\bmu|\bnu}^{*}\cap M_{\bga|\bgb}$.

     \item[(ii)] $\mathrm{Hom}_{H}(M_{\bmu|\bnu}, M_{\bga|\bgb})\cong M_{\bmu|\bnu}^*\cap M_{\bga|\bgb}$.
   \end{enumerate}
 \end{corollary}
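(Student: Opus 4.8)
The plan is to deduce both parts from Theorem~\ref{Them:ann} together with the standard adjunction between homomorphisms out of a cyclic module and double annihilators. First I would establish part~(i). Since $M^{\bmu|\bnu}=m_{\bmu|\bnu}H$ is generated as an $H$-module by the single element $m_{\bmu|\bnu}$, any $\varphi\in\mathrm{Hom}_H(M^{\bmu|\bnu},M^{\bga|\bgb})$ is completely determined by the image $\varphi(m_{\bmu|\bnu})$, which must lie in $M^{\bga|\bgb}=m_{\bga|\bgb}H\subseteq H$. The key constraint is that $\varphi$ respects the right $H$-action: for every $h\in r(m_{\bmu|\bnu})$ (i.e. $m_{\bmu|\bnu}h=0$) we have $\varphi(m_{\bmu|\bnu})h=\varphi(m_{\bmu|\bnu}h)=\varphi(0)=0$, so that $\varphi(m_{\bmu|\bnu})\in l(r(m_{\bmu|\bnu}))=lr(m_{\bmu|\bnu})$. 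By Theorem~\ref{Them:ann} this double annihilator equals $Hm_{\bmu|\bnu}$, so there exists $h_\varphi\in H$ with $\varphi(m_{\bmu|\bnu})=h_\varphi m_{\bmu|\bnu}$, giving the first clause. For the membership clause, I would note $\varphi(m_{\bmu|\bnu})=h_\varphi m_{\bmu|\bnu}\in H m_{\bmu|\bnu}=(m_{\bmu|\bnu}H)^*=(M^{\bmu|\bnu})^*$, using that $*$ is an anti-automorphism fixing $m_{\bmu|\bnu}$ (since $x_{\bmu}$, $y_{\bnu}$, $u^{\pm}$ are $*$-invariant by the remarks following Definition~\ref{Def:x-y-lambda}); simultaneously $\varphi(m_{\bmu|\bnu})\in M^{\bga|\bgb}$ because it is the image of $\varphi$. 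Hence $\varphi(m_{\bmu|\bnu})\in(M^{\bmu|\bnu})^*\cap M^{\bga|\bgb}$.

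Next I would prove part~(ii) by exhibiting mutually inverse maps. Define $\Phi\colon\mathrm{Hom}_H(M^{\bmu|\bnu},M^{\bga|\bgb})\to(M^{\bmu|\bnu})^*\cap M^{\bga|\bgb}$ by $\Phi(\varphi)=\varphi(m_{\bmu|\bnu})$; part~(i) shows this is well-defined and clearly $R$-linear. For the inverse, given $z\in(M^{\bmu|\bnu})^*\cap M^{\bga|\bgb}$, I would write $z=h_z m_{\bmu|\bnu}$ for some $h_z\in H$ (possible since $z\in(M^{\bmu|\bnu})^*=Hm_{\bmu|\bnu}$) and attempt to define $\varphi_z(m_{\bmu|\bnu}h)=zh=h_z m_{\bmu|\bnu}h$ for all $h\in H$. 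The content is that this rule is well-defined: if $m_{\bmu|\bnu}h=0$ then $zh=h_z m_{\bmu|\bnu}h=0$, so $\varphi_z$ does not depend on the choice of coset representative $h$. This $\varphi_z$ is manifestly a right $H$-module map, and its image is $zH\subseteq M^{\bga|\bgb}H=M^{\bga|\bgb}$ since $z\in M^{\bga|\bgb}$ and the latter is a right ideal. The assignment $z\mapsto\varphi_z$ is $R$-linear, and one checks $\Phi(\varphi_z)=\varphi_z(m_{\bmu|\bnu})=z$ and, conversely, that any $\varphi$ agrees with $\varphi_{\Phi(\varphi)}$ on the generator $m_{\bmu|\bnu}$ and hence on all of $M^{\bmu|\bnu}$. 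Thus $\Phi$ is an $R$-isomorphism.

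The main obstacle is really a bookkeeping point rather than a deep one: the proof depends entirely on the double-annihilator identity $lr(m_{\bmu|\bnu})=Hm_{\bmu|\bnu}$ of Theorem~\ref{Them:ann}, which is what converts the abstract constraint "$\varphi(m_{\bmu|\bnu})$ annihilates $r(m_{\bmu|\bnu})$ on the right" into the concrete form "$\varphi(m_{\bmu|\bnu})\in Hm_{\bmu|\bnu}$". Without that identity one would only obtain $\varphi(m_{\bmu|\bnu})\in lr(m_{\bmu|\bnu})$, which need not be cyclic. I would also be careful to record that the $*$-invariance of $m_{\bmu|\bnu}$ is what identifies $Hm_{\bmu|\bnu}$ with $(m_{\bmu|\bnu}H)^*=(M^{\bmu|\bnu})^*$, so that the two descriptions of the target space coincide; this is the only place where the anti-automorphism enters, and it is needed to phrase the answer intrinsically in terms of $(M^{\bmu|\bnu})^*\cap M^{\bga|\bgb}$.
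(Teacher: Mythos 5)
Your proposal is correct and follows essentially the same route as the paper: the paper deduces both parts from Theorem~\ref{Them:ann} together with \cite[Lemma~5.2]{DJM}, whose content (homomorphisms out of the cyclic module $m_{\bmu|\bnu}H$ land in $lr(m_{\bmu|\bnu})=Hm_{\bmu|\bnu}$, and the evaluation map $\varphi\mapsto\varphi(m_{\bmu|\bnu})$ is an isomorphism onto $(M^{\bmu|\bnu})^*\cap M^{\bga|\bgb}$) is exactly what you prove inline. Your additional care in recording the $*$-invariance of $m_{\bmu|\bnu}$, which identifies $Hm_{\bmu|\bnu}$ with $(M^{\bmu|\bnu})^*$, is a correct and worthwhile detail that the paper leaves implicit at this point (it is stated only later, in the proof of Theorem~\ref{Them:Cellular-algebra}).
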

\begin{proof}
  Theorem~\ref{Them:ann} and \cite[Lemma~5.2(i)]{DJM} imply that $\varphi(m_{\bmu|\bnu})=h_{\varphi}m_{\bmu|\bnu}$ for some $h_{\varphi}\in H$. Thus $\varphi(m_{\bmu|\bnu})\in M_{\bmu|\bnu}^*$. Clearly $\varphi(m_{\bmu|\bnu})\in M_{\bga|\bgb}$ and we prove (i).  (ii) follows from \cite[Lemma~5.2(ii)]{DJM} via an explicit isomorphism given by $\varphi\mapsto \varphi(m_{\bmu|\bnu})$.
\end{proof}

\begin{point}{}* \label{Point:Double-dec}Now we are ready to construct a basis of $M_{\bmu|\bnu}^*\cap M_{\bga|\bgb}$, or equivalently a basis of $\mathrm{Hom}_{H}(M_{\bmu|\bnu}, M_{\bga|\bgb})$ according to Corollary~\ref{Cor:Hom}(ii). To do that we need more notations. Suppose that $\bmu|\bnu,\bga|\bgb\in \mathcal{C}(\bk|\bl;n)$, $\bgl\in H(\bk|\bl;n)$, $\mathrm{S}\in \std(\bgl,\bmu|\bnu)$ and $\mathrm{T}\in \std(\bgl,\bga|\bgb)$. If $\fs,\ft$ are standard $\bgl$-tableaux such that $\bmu|\bnu(\fs)=\mathrm{S}$ and $\bga|\bgb(\ft)=\mathrm{T}$, then we denote by $\fs_{\bmu\cap\bga}$ (resp. $\ft_{\bmu\cap\bga}$) the standard subtableau of $\fs$ (resp. $\ft$) with shape $\mathrm{shape}(\fs_{\bmu})\cap\mathrm{shape}(\ft_{\bga})$
 and by $\fs_{\bnu\cup\bgb}$ (resp. $\ft_{\bnu\cup\bgb}$) the standard skew-subtableau of $\fs$ (resp. $\ft$) of shape $\bgl/\mathrm{shape}(\fs_{\bmu\cap\bga})$. Let us remark that, thanks to Lemma~\ref{Lemm:shape(bmu)=shape(bnu)}, $\mathrm{shape}(\fs_{\bmu})\cap\mathrm{shape}(\ft_{\bga})$ and $\bgl/\mathrm{shape}(\fs_{\bmu\cap\bga})$ dependents only on $\mathrm{S}$ and $\mathrm{T}$.

Suppose that $\mathrm{shape}(\fs_{\bmu\cap\bga})=\bkp$ and $\bgl/\mathrm{shape}(\fs_{\bmu\cap\bga})=\btau$. Following \S~\ref{Point:Dec},
   we may and will identify the standard (sub)tableau $\fs_{\bmu\cap\bga}$ (resp. $\ft_{\bmu\cap\bga}$) with the unique row-standard $\bkp|\widetilde{\btau}$-tableau $\fs_{\bmu\cap\bga}|\ft^{\widetilde{\btau}}$ (resp. $\ft_{\bmu\cap\bga}|\ft^{\tilde{\btau}}$), where $\ft^{\widetilde{\btau}}$ is the $\widetilde{\btau}$-tableau with the numbers occurring in  $\fs_{\bnu\cup\bgb}$ (resp. $\ft_{\bnu\cup\bgb}$) entered in order first along the rows of the first component of $\widetilde{\btau}$, and then along the rows of the second component, and so on. Similarly, the standard skew subtableau $\fs_{\bnu\cup\bgb}$ (resp. $\ft_{\bnu\cup\bgb}$) may and will be viewed as the unique row-standard $\widetilde{\bkp}|\btau$-tableau $\ft^{\widetilde{\bkp}}|\fs_{\bnu\cup\bgb}$ (resp. $\ft^{\widetilde{\bkp}}|\ft_{\bnu\cup\bgb}$), where $\ft^{\widetilde{\bkp}}$ is the unique row standard $\widetilde{\bkp}$-tableau with the numbers occurring in $\fs_{\bmu\cap\bga}$ (resp. $\ft_{\bmu\cap\bga}$) entered in order first along the rows of the first component of $\widetilde{\bkp}$, and then along the rows of the second component, and so on.
  \end{point}

\begin{example}\label{Exam:Double}Assumptions and notations being as in Example~\ref{Exam:tableau-type}, assume that
$\bga|\bgb=((2);(2))|((2);(2,2))$ and
\begin{equation*}
 \mathrm{T}=\biggl(\,\Twocolor(x_1^{(1)}&x_1^{(1)}&x_1^{(2)}\cr y_1^{(1)}\cr y_1^{(1)} |x_1^{(2)}&y_2^{(2)}
  \cr y_1^{(2)}&y_2^{(2)}\cr y_1^{(2)})\,\biggr)\in \std(\bgl,\bga|\bgb).\end{equation*}
  Then there are two standard $\bgl$-tableaux $\ft$ such that $\bga|\bgb(\ft)=\mathrm{T}$:
  \begin{equation*}
 \tilde{\fa}=\biggl(\,\Twotab(1&2&5\cr 3\cr 4|6&9
  \cr 7&10\cr 8)\,\biggr),
  \tilde{\fb}=\biggl(\,\Twotab(1&2&6\cr 3\cr 4|5&9
  \cr 7&10\cr 8)\,\biggr).\end{equation*}
  Hence, according to \S\ref{Point:Dec}, we obtain
  \begin{eqnarray*}
 &&\fa_{\bmu\cap\bga}=\tilde{\fa}_{\bmu\cap\bga}=\protect{\biggl(\!\Twotab(1&2&5|6
  )\!\biggr)\!=\!\biggl(\!\Twotab(1&2&5\cr 3\cr 4|6\cr7
  \cr 8\cr9\cr10)\!\biggr)}, \\
  &&\fa_{\bnu\cup\bgb}=\tilde{\fa}_{\bnu\cup\bgb}\!=\!\protect{\biggl(\Twotab(3\cr 4|9\cr 7& 10\cr 8
  )\biggr)=\biggl(\,\Twotab(1\cr2\cr5\cr 3\cr 4|6\cr9
  \cr 7&10\cr8)\,\biggr)}.\end{eqnarray*}
     \end{example}

\begin{definition}\label{Def:M-ST}
Assumptions and  notations being as in \S\ref{Point:Double-dec}, we define
  \begin{equation*}
    m_{\mathrm{ST}}=\sum_{\substack{\fs,\ft\in\std(\bgl)\\\bmu|\bnu(\fs)=\mathrm{S}, \bga|\bgb(\ft)=\mathrm{T}}}m_{\fs_{\bmu\cap\bga}\ft_{\bmu\cap\bga}}\sum_{\substack{\fs,\ft\in\std(\bgl)\\\bmu|\bnu(\fs)=\mathrm{S}, \bga|\bgb(\ft)=\mathrm{T}}}n_{\fs_{\bnu\cup\bgb}\ft_{\bnu\cup\bgb}}.
  \end{equation*}
 \end{definition}

 \begin{example}Assumptions and notations being as in Examples \ref{Exam:stand-sstd} and \ref{Exam:Double}. Then
  \begin{equation*}
    m_{\mathrm{ST}}=\sum_{\substack{\fs\in\{\fa,\fb,\fc\}\\ \ft\in\{\tilde{\fa},\tilde{\fb}\}}}m_{\fs_{\bmu\cap\bga}\ft_{\bmu\cap\bga}}
    \sum_{\substack{\fs\in\{\fa,\fb,\fc\}\\ \ft\in\{\tilde{\fa},\tilde{\fb}\}}}n_{\fs_{\bnu\cup\bgb}\ft_{\bnu\cup\bgb}}.
  \end{equation*}
 \end{example}
\begin{remark}\label{Remark:m-ST}For any $\fs,\ft\in\std(\bgl)$ such that $\bmu|\bnu(\fs)=\mathrm{S}$ and $\bga|\bgb(\ft)=\mathrm{T}$, $m_{\fs_{\bmu\cap\bga}\ft_{\bmu\cap\bga}}$ commutes with $n_{\fs_{\bnu\cup\bgb}\ft_{\bnu\cup\bgb}}$. Therefor, we yield that
\begin{eqnarray*}
m_{\mathrm{ST}}^*&=&\sum_{\substack{\fs,\ft\in\std(\bgl)\\\bmu|\bnu(\fs)=\mathrm{S}, \bga|\bgb(\ft)=\mathrm{T}}}m_{\fs_{\bmu\cap\bga}\ft_{\bmu\cap\bga}}^*\sum_{\substack{\fs,\ft\in\std(\bgl)\\\bmu|\bnu(\fs)=\mathrm{S}, \bga|\bgb(\ft)=\mathrm{T}}}n_{\fs_{\bnu\cup\bgb}\ft_{\bnu\cup\bgb}}^*\\
&=&\sum_{\substack{\fs,\ft\in\std(\bgl)\\\bmu|\bnu(\fs)=\mathrm{S}, \bga|\bgb(\ft)=\mathrm{T}}}m_{\ft_{\bmu\cap\bga}\fs_{\bmu\cap\bga}}\sum_{\substack{\fs,\ft\in\std(\bgl)\\\bmu|\bnu(\fs)=\mathrm{S}, \bga|\bgb(\ft)=\mathrm{T}}}n_{\ft_{\bnu\cup\bgb}\fs_{\bnu\cup\bgb}}\\
&=&m_{\mathrm{TS}}.
\end{eqnarray*}
\end{remark}

\begin{proposition}\label{Prop:Basis}Assume that $\bmu|\bnu, \bga|\bgb\in
\mathcal{C}(\bk|\bl;n)$. Then
\begin{equation*}
     \left\{m_{\mathrm{ST}}|\mathrm{S}\in \std(\bgl,\bmu|\bnu), \mathrm{T}\in \std(\bgl,\bga|\bgb) \text{ for some }\bgl \in H(\bk|\bl;n) \right\}
   \end{equation*}
   is a basis of $M_{\bga|\bgb}^*\cap M_{\bmu|\bnu}$.
\end{proposition}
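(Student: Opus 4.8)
The plan is to verify the three defining properties of a basis for the set $\{m_{\mathrm{ST}}\}$: that each member lies in $(M^{\bga|\bgb})^{*}\cap M^{\bmu|\bnu}$, that they are $R$-linearly independent, and that they span. First I would record the key bookkeeping observation that, since $\fs_{\bmu}$ and $\ft_{\bga}$ both have shape $\bgl_{\sharp}$ while $\fs_{\bnu}$ and $\ft_{\bgb}$ both have shape $\bgl_{\ast}'$, the hook splitting of a standard tableau of shape $\bgl$ into its $\bx$-part and $\by$-part depends only on $\bgl$; consequently $\ft_{\bga}=\ft_{\bmu}$ and $\ft_{\bgb}=\ft_{\bnu}$, and
\[
m_{\mathrm{ST}}=\sum_{\substack{\fs,\ft\in\std(\bgl)\\ \bmu|\bnu(\fs)=\mathrm{S},\ \bga|\bgb(\ft)=\mathrm{T}}}m_{\fs_{\bmu}\ft_{\bga}}n_{\fs_{\bnu}\ft_{\bgb}}=\sum_{\substack{\ft\in\std(\bgl)\\ \bga|\bgb(\ft)=\mathrm{T}}}m_{\mathrm{S}\ft}
\]
by Definition~\ref{Def:St}. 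Lemma~\ref{Lemm-basis} gives $m_{\mathrm{S}\ft}\in M^{\bmu|\bnu}$, hence $m_{\mathrm{ST}}\in M^{\bmu|\bnu}$; and applying $*$ together with $m_{\mathrm{ST}}^{*}=m_{\mathrm{TS}}=\sum_{\bmu|\bnu(\fs)=\mathrm{S}}m_{\mathrm{T}\fs}$ and the same lemma (with $\bmu|\bnu$ and $\bga|\bgb$ interchanged) shows $m_{\mathrm{ST}}^{*}\in M^{\bga|\bgb}$, i.e. $m_{\mathrm{ST}}\in(M^{\bga|\bgb})^{*}$.

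For linear independence I would read off the displayed identity against the $R$-basis $\{m_{\mathrm{S}\ft}\}$ of $M^{\bmu|\bnu}$ furnished by Theorem~\ref{Them:M-basis}: the element $m_{\mathrm{ST}}$ is the coefficient-one sum of exactly those basis vectors $m_{\mathrm{S}\ft}$ for which $\bga|\bgb(\ft)=\mathrm{T}$ (a nonempty set, since $\mathrm{T}$ has at least one standard preimage by Remark~\ref{Remark:s=s_x|s_y}). For two distinct triples $(\bgl,\mathrm{S},\mathrm{T})$ and $(\bgl',\mathrm{S}',\mathrm{T}')$ the corresponding supports are disjoint: distinct $\bgl$ or distinct $\mathrm{S}$ already separate the first index, while for fixed $(\bgl,\mathrm{S})$ and $\mathrm{T}\neq\mathrm{T}'$ the fibres $\{\ft:\bga|\bgb(\ft)=\mathrm{T}\}$ and $\{\ft:\bga|\bgb(\ft)=\mathrm{T}'\}$ are disjoint because $\ft\mapsto\bga|\bgb(\ft)$ is a well-defined map. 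Pairwise disjoint supports inside a basis force linear independence over $R$.

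The real content is spanning. Take $h\in(M^{\bga|\bgb})^{*}\cap M^{\bmu|\bnu}$ and expand $h=\sum_{\mathrm{S},\ft}r_{\mathrm{S}\ft}m_{\mathrm{S}\ft}$ in the basis of Theorem~\ref{Them:M-basis}; it then suffices to prove that $r_{\mathrm{S}\ft}$ depends only on $\mathrm{S}$ and on $\mathrm{T}:=\bga|\bgb(\ft)$ and vanishes whenever $\mathrm{T}$ is not $(\bk,\bl)$-semistandard, for then grouping the $\ft$ by their image recovers $h=\sum r_{\mathrm{S}\mathrm{T}}m_{\mathrm{ST}}$. Conceptually, the expansion already encodes that the \emph{left} index of each standard basis term has been symmetrised into a semistandard tableau of type $\bmu|\bnu$ (this is what membership in $M^{\bmu|\bnu}$ means, via Corollary~\ref{Cor:basis}); the extra condition $h\in(M^{\bga|\bgb})^{*}$, that is $h^{*}\in M^{\bga|\bgb}$, is precisely the mirror condition that symmetrises the \emph{right} index into a semistandard tableau of type $\bga|\bgb$. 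I would therefore prove the claim by running the reduction in the proof of Corollary~\ref{Cor:basis} a second time, now on the right-hand index (equivalently, by applying that reduction to $h^{*}$): using $\std_{\mathrm{last}}(\bgl,\bga|\bgb)$ as in that proof, the leading-term analysis of \cite[Lemma~4.11]{DJM} locates in any nonzero $h$ a forced occurrence of a term attached to $\mathrm{last}(\mathrm{T})$, and Corollary~\ref{Cor:fisrt-last} (with its elementary transpositions $\fs\mapsto\fs(i,i+1)$ linking the standard tableaux of a single fibre) identifies the correct semistandard $\mathrm{T}$; subtracting the matching multiple of $m_{\mathrm{ST}}$ strictly reduces $h$, and the induction terminates at $0$.

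The hard part will be this two-sided peeling: unlike Corollary~\ref{Cor:basis}, which only had to straighten one index, here the dominance and leading-term bookkeeping must be controlled simultaneously for the left index (governed by $M^{\bmu|\bnu}$) and the right index (governed by $(M^{\bga|\bgb})^{*}$), and one must check that the two straightenings are compatible and do not interfere. As a consistency check on the final count, the argument exhibits $(M^{\bga|\bgb})^{*}\cap M^{\bmu|\bnu}$ as free of rank $\sum_{\bgl\in H(\bk|\bl;n)}|\std(\bgl,\bmu|\bnu)|\,|\std(\bgl,\bga|\bgb)|$, which matches both the number of the $m_{\mathrm{ST}}$ and, via Corollary~\ref{Cor:Hom}, the rank of $\mathrm{Hom}_{H}(M^{\bga|\bgb},M^{\bmu|\bnu})$.
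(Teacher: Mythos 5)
Your proposal is correct and follows the paper's proof in all essentials: the identity $m_{\mathrm{ST}}=\sum_{\ft:\,\bga|\bgb(\ft)=\mathrm{T}}m_{\mathrm{S}\ft}=\sum_{\fs:\,\bmu|\bnu(\fs)=\mathrm{S}}(m_{\mathrm{T}\fs})^*$ combined with Lemma~\ref{Lemm-basis} for membership in the intersection, linear independence (which the paper merely asserts, and which your disjoint-support argument justifies cleanly), and for spanning the reduction to showing that the coefficient of each standard term depends only on the pair of types $(\mathrm{S},\mathrm{T})$ and vanishes unless both are $(\bk,\bl)$-semistandard. The one place you diverge is in how the right-index constancy is established: you propose to re-run the leading-term peeling of Corollary~\ref{Cor:basis} on $h^*$, using $\std_{\mathrm{last}}$ and \cite[Lemma~4.11]{DJM}, and you flag the simultaneous control of the two indices as the hard part. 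The paper short-circuits this: it expands $h$ in the standard basis and applies Theorem~\ref{Them:M-basis} twice --- once to $h\in M^{\bmu|\bnu}$, giving $r_{\fs\ft}=r_{\fs'\ft}$ whenever $\bmu|\bnu(\fs)=\bmu|\bnu(\fs')$, and once (``similarly'', i.e.\ to $h^*\in M^{\bga|\bgb}$) giving $r_{\fs\ft}=r_{\fs\ft'}$ whenever $\bga|\bgb(\ft)=\bga|\bgb(\ft')$ --- and the two one-sided statements combine purely formally via the chain $r_{\fs\ft}=r_{\fs'\ft}=r_{\fs\ft'}=r_{\fs'\ft'}$, together with the vanishing of $r_{\fs\ft}$ unless both types are semistandard. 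So the compatibility check you anticipate is a non-issue: the freeness statement of Theorem~\ref{Them:M-basis} already packages all the one-sided straightening, and left- and right-constancy are logically independent conditions on the coefficient function. Your more laborious route is not wrong --- indeed it confronts explicitly the straightening between the fibre-sum elements and the standard basis, a point the paper's two-line citation glosses over --- but quoting the theorem symmetrically, as the paper does, makes the second induction redundant; your closing rank count agrees with the paper's conclusion and with Corollary~\ref{Cor:Hom}.
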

\begin{proof}By Definition~\ref{Def:M-ST} and  Lemma~\ref{Lemm-basis}, we obtain that $m_{\mathrm{ST}}\in M_{\bga|\bgb}^*\cap M_{\bmu|\bnu}$. Moreover $m_{\mathrm{ST}}$ are linearly independently since the elements $m_{\fs_{\bmu\cap\bga}\ft_{\bmu\cap\bga}}n_{\fs_{\bnu\cup\bgb}\ft_{\bnu\cup\bgb}}$ are linearly independent for $\fs,\ft\in \std(\bgl)$ such that $\bmu|\bnu(\fs)=\mathrm{S}$ and $\bga|\bgb(\ft)=\mathrm{T}$. Now suppose that $x\in M_{\bga|\bgb}^*\cap M_{\bmu|\bnu}$. Since $M_{\bga|\bgb}^*=(m_{\bga^*}n_{\bgb_*}H)^*=Hm_{\bga^*}n_{\bgb_*}$ and $M_{\bmu|\bnu}=m_{\bmu^*}n_{\bnu_*}H$, thanks to Theorem~\ref{Them:mn-basis}, we write
 $Hm_{\bga^*}=\sum_{\fa,\fb}r_{\fa,\fb}m_{\fa\fb}$ and $n_{\bgb_*}=\sum_{\tilde{\fa},\tilde{\fb}}\tilde{r}_{\tilde{\fa}\tilde{\fb}}
 n_{\tilde{\fa}\tilde{\fb}}$ (Similarly, we write $m_{\bmu_*}=\sum_{\fa,\fb}r_{\fa,\fb}m_{\fa\fb}$ and $n_{\bnu_*}H=\sum_{\tilde{\fa},\tilde{\fb}}\tilde{r}_{\tilde{\fa}\tilde{\fb}}
 n_{\tilde{\fa}\tilde{\fb}}$). Thus we may assume that
  \begin{equation*}
   x=a_{\fa\fb}^{\tilde{\fa}\tilde{\fb}}\sum_{\fa,\fb,\tilde{\fa},\tilde{\fb}}
   m_{\fa\fb}n_{\tilde{\fa}\tilde{\fb}}
  \end{equation*}
  where $a_{\fa\fb}^{\tilde{\fa}\tilde{\fb}}\in R$. Since $x\in M_{\bmu|\bnu}$, we have $\fa=\fs_{\bmu}$, $\fb=\ft_{\bmu}$,  $\tilde{\fa}=\fs_{\bnu}$ and $\tilde{\fb}=\ft_{\bnu}$ for some $\fs,\ft\in\std(\bgl)$ such that $\bmu|\bnu(\fs)=\mathrm{S}$. Moreover we have $a_{\fa\fb}^{\tilde{\fa}\tilde{\fb}}=a_{\fa'\fb}^{\tilde{\fa'}\tilde{\fb}}$ whenever $\fa=\fs_{\bmu}$ and $\fa'=\fs'_{\bmu}$ for some $\fs,\fs'\in\std(\bgl)$ such that $\bmu|\bnu(\fs)=\bmu|\bnu(\fs')=\mathrm{S}$ due to Theorem~\ref{Them:M-basis}. Conversely
  $x\in M_{\bga|\bgb}^*$ implies $a_{\fa\fb}^{\tilde{\fa}\tilde{\fb}}=a_{\fa\fb'}^{\tilde{\fa}\tilde{\fb'}}$ whenever $\fb=\ft_{\bga}$ and $\fb'=\ft'_{\bga}$ for some $\ft,\ft'\in\std(\bgl)$ such that $\bga|\bgb(\ft)=\bga|\bgb(\ft')=\mathrm{T}$. Thus if $\fa=\fs_{\bmu}, \fa'=\fs'_{\bmu}$ for some $\fs,\fs'\in \std(\bgl)$ with $\bmu|\bnu(\fs)=\bmu|\bnu(\fs')$ and $\fb=\ft_{\bga}, \fb'=\ft'_{\bga}$ for some $\ft,\ft'\in\std(\bgl)$ with $\bga|\bgb(\ft)=\bgb|\bga(\ft')$, then  $a_{\fa\fb}^{\tilde{\fa}\tilde{\fb}}=a_{\fa'\fb}^{\tilde{\fa'}\tilde{\fb}}=
  a_{\fa\fb}^{\tilde{\fa}\tilde{\fb}}=a_{\fa\fb'}^{\tilde{\fa}\tilde{\fb'}}$. Furthermore, $a_{\fa\fb}^{\tilde{\fa}\tilde{\fb}}=0$ unless there exist $\fs,\ft\in\std(\bgl)$ such that $\bmu|\bnu(\fs)=\mathrm{S}$ and $\bga|\bgb(\ft)=\mathrm{T}$, which implies that $\fa=\fs_{\bmu\cap\bga}$, $\tilde{\fa}=\fs_{\bnu\cup\bgb}$, $\fb=\ft_{\bmu\cap\bga}$ and $\tilde{\fb}=\ft_{\bnu\cup\bgb}$. It completes the proof.
\end{proof}
Proposition~\ref{Prop:Basis} and Corollary~\ref{Cor:Hom} show that the following definition is well-defined.
\begin{definition} Suppose that $\bmu|\bnu,\bga|\bgb\in\mathcal{C}(\bk|\bl;n)$ and $\bgl\in H(\bk|\bl;n)$. For $\mathrm{S}\in \std(\bgl,\bmu|\bnu)$ and $\mathrm{T}\in \std(\bgl, \bga|\bgb)$, we define a homomorphism $\varphi_{\mathrm{ST}}\in \mathrm{Hom}_{H}(M_{\bga|\bgb}, M_{\bmu|\bnu})$ by
  \begin{equation*}
    \varphi_{\mathrm{ST}}(m_{\bga|\bgb}h)=m_{\mathrm{ST}}h
  \end{equation*}
  for all $h\in H$. Note that $\varphi_{\mathrm{ST}}$ is a homogeneous homomorphism with degree $|\bnu|+|\bgb|(\mathrm{mod}\,2)$ and we can extend $\varphi_{\mathrm{ST}}$ to an element of the cyclotomic $q$-Schur superalgebra $\mathscr{S}(\bk|\bl)$ by defining $\varphi_{\mathrm{ST}}(m_{\bkp|\btau})=0$ when $\bkp|\btau\neq \bga|\bgb$.
\end{definition}

\begin{definition}\label{Def:bar-varphi}
For $\bgl\in H(\bk|\bl;n)$, we define $\overline{\mathscr{S}(\bk|\bl;n)}_{\bgl}$ to be the $R$-submodule of $\mathscr{S}(\bk|\bl;n)$ spanned by \begin{equation*}
     \left\{\varphi_{\mathrm{ST}}\left|\begin{array}{l}\mathrm{S}\in \std(\tilde{\bgl},\bmu|\bnu), \mathrm{T}\in \std(\tilde{\bgl},\bga|\bgb) \text{ for some }\bmu|\bnu, \\\bga|\bgb\in \mathcal{C}(\bk|\bl;n) \text{ and } \tilde{\bgl}\vdash n \text{ with }\tilde{\bgl}\rhd \bgl\end{array} \right. \right\}.
   \end{equation*}
   \end{definition}

   Now we can state the main results of the paper.
\begin{theorem}\label{Them:main}
\begin{enumerate}
  \item[(i)]The cyclotomic $q$-Schur superalgebra $\mathscr{S}(\bk|\bl;n)$ is a free $R$-module with homogeneous basis
  \begin{equation*}
     \left\{\varphi_{\mathrm{ST}}\left|\begin{array}{l}\mathrm{S}\in \std(\bgl,\bmu|\bnu), \mathrm{T}\in \std(\bgl,\bga|\bgb) \text{ for some }\bmu|\bnu, \\\bga|\bgb\in \mathcal{C}(\bk|\bl;n)  \text{ and } \bgl\in H(\bk|\bl;n)\end{array}\right. \right\}.
   \end{equation*}
  \item[(ii)] Suppose that $\bmu|\bnu,\bga|\bgb\in \mathcal{C}(\bk|\bl;n)$ and $\bgl\in H(\bk|\bl;n)$ and let $\varphi\in \mathscr{S}(\bk|\bl;n)$. Then for every $\bga|\bgb\in\mathcal{C}(\bk|\bl;n)$ and $\mathrm{U}\in \std(\bgl,\bga|\bgb)$ there exists $r_{\mathrm{U}}\in R$ such that for all $\mathrm{S}\in \std(\bgl, \bmu|\bnu)$ and $\mathrm{T}\in \std(\bgl, \bga|\bgb)$,
      \begin{equation*}
       \varphi_{\mathrm{ST}}\varphi\equiv\sum_{\bkp|\btau\in \mathcal{C}(\bk|\bl;n) }\sum_{\mathrm{U}\in \std(\bgl,\bkp|\btau)}r_{\mathrm{U}}
       \varphi_{\mathrm{SU}}\mathrm{mod\,}\overline{\mathscr{S}(\bk|\bl;n)}_{\bgl}.
      \end{equation*}
\end{enumerate}
\end{theorem}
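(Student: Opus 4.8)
The plan is to derive both parts by assembling the results on permutation supermodules with the cellularity of $H$ (Theorem~\ref{Them:mn-basis}), following the strategy of \cite[Theorem~6.6]{DJM}. For part (i), I would first invoke the decomposition $\mathscr{S}(\bk|\bl;n)\cong\bigoplus_{\bmu|\bnu,\,\bga|\bgb}\mathrm{Hom}_H(M^{\bga|\bgb},M^{\bmu|\bnu})$ recorded in the opening remark, reducing the claim to a single $\mathrm{Hom}$-space. Corollary~\ref{Cor:Hom}(ii) identifies $\mathrm{Hom}_H(M^{\bga|\bgb},M^{\bmu|\bnu})$ with $(M^{\bga|\bgb})^*\cap M^{\bmu|\bnu}$ via $\varphi\mapsto\varphi(m_{\bga|\bgb})$, and Proposition~\ref{Prop:Basis} supplies the $R$-basis $\{m_{\mathrm{ST}}\}$ of the latter, indexed by $\mathrm{S}\in\std(\bgl,\bmu|\bnu)$ and $\mathrm{T}\in\std(\bgl,\bga|\bgb)$ with $\bgl\in H(\bk|\bl;n)$. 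Since $\varphi_{\mathrm{ST}}$ is by construction the preimage of $m_{\mathrm{ST}}$ under this identification, the family $\{\varphi_{\mathrm{ST}}\}$ is an $R$-basis of $\mathscr{S}(\bk|\bl;n)$, which gives freeness. Homogeneity is then automatic, since $\varphi_{\mathrm{ST}}$ lies in the single summand $\mathrm{Hom}_H(M^{\bga|\bgb},M^{\bmu|\bnu})$ and is therefore homogeneous of $\mathbb{Z}_2$-degree $|\bnu|+|\bgb|\pmod 2$.

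For part (ii), I would evaluate $\varphi_{\mathrm{ST}}\varphi$ on each cyclic generator $m_{\bkp|\btau}$ and read off its coordinates in the basis of part (i). Since $\varphi_{\mathrm{ST}}$ annihilates every summand $M^{\bga'|\bgb'}$ with $\bga'|\bgb'\neq\bga|\bgb$, only the $M^{\bga|\bgb}$-component of $\varphi(m_{\bkp|\btau})$ survives; writing that component as $m_{\bga|\bgb}h$ with $h\in H$, right $H$-linearity yields $(\varphi_{\mathrm{ST}}\varphi)(m_{\bkp|\btau})=m_{\mathrm{ST}}\,h$. By Corollary~\ref{Cor:Hom}(i) this element lies in $(M^{\bkp|\btau})^*\cap M^{\bmu|\bnu}$, so by Proposition~\ref{Prop:Basis} it is an $R$-combination of basis elements $m_{\mathrm{S}'\mathrm{U}}$ with $\mathrm{S}'$ of type $\bmu|\bnu$ and $\mathrm{U}$ of type $\bkp|\btau$ of a common shape. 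The entire content of part (ii) is then that, modulo the shapes $\tilde{\bgl}\rhd\bgl$, only $\mathrm{S}'=\mathrm{S}$ occurs and the coefficients are independent of $\mathrm{S}$.

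The mechanism for this is the cellularity of $H$. Expanding $m_{\mathrm{ST}}=\sum_{\fs,\ft}m_{\fs_{\bmu}\ft_{\bga}}n_{\fs_{\bnu}\ft_{\bgb}}$ as in its definition and multiplying on the right by $h$, I would apply the cellular straightening rules for the two bases $\mathcal{M}$ and $\mathcal{N}$ of Theorem~\ref{Them:mn-basis}, each of the form $m_{\fs\ft}h\equiv\sum_{\fv}r_{\ft\fv}(h)\,m_{\fs\fv}$ modulo the span $H^{\rhd\bgl}$ of more dominant cellular basis elements, with coefficients $r_{\ft\fv}(h)$ depending only on $\ft$ and $h$. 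The first indices $\fs_{\bmu},\fs_{\bnu}$, and hence $\mathrm{S}$, are spectators, so after resumming over $\{\fs:\bmu|\bnu(\fs)=\mathrm{S}\}$ and collecting the $(\bk,\bl)$-semistandard terms via Corollary~\ref{Cor:fisrt-last} one obtains $m_{\mathrm{ST}}\,h\equiv\sum_{\mathrm{U}}r_{\mathrm{U}}\,m_{\mathrm{SU}}$ modulo the span of $m_{\mathrm{S}'\mathrm{U}'}$ with $\tilde{\bgl}\rhd\bgl$. The latter span matches $\overline{\mathscr{S}(\bk|\bl;n)}_{\bgl}$ under the identification of part (i), and the $r_{\mathrm{U}}$ inherit their independence of $\mathrm{S}$ from $H$. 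Letting the generator $m_{\bkp|\btau}$ range over all types then delivers the stated congruence.

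The main obstacle I expect is the simultaneous bookkeeping of the two cellular structures inside the mixed product $m_{\fs_{\bmu}\ft_{\bga}}n_{\fs_{\bnu}\ft_{\bgb}}$. Unlike the even case of \cite{DJM}, where $m_{\mathrm{ST}}$ is built from the single basis $\mathcal{M}$, here the $\bx$-part is governed by $\mathcal{M}$ and the dominance attached to $\bgl_{\sharp}$ while the $\by$-part is governed by $\mathcal{N}$ and the dominance attached to $\bgl_{\ast}$, and one must check that right multiplication by $h$ respects both at once and that the two partial orders combine into the single dominance order on $\mpn$. Moreover, straightening produces tableaux that need not be $(\bk,\bl)$-semistandard, so I would use the Specht filtration of Theorem~\ref{Them:M-basis} to guarantee that such terms either cancel or are absorbed into the more dominant span, ensuring the surviving terms are precisely the basis elements of Proposition~\ref{Prop:Basis}. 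Making these compatibilities precise is the technical heart that adapts \cite[Theorem~6.6]{DJM}, together with the signed constructions of \cite{DR}, to the present super setting.
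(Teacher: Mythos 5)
Your proposal is correct and takes essentially the same route as the paper: the paper's own ``proof'' is just the one-line appeal to the argument of \cite[Theorem~6.6]{DJM}, and your reconstruction — decomposing $\mathscr{S}(\bk|\bl;n)$ into $\mathrm{Hom}$-spaces, identifying each with $(M^{\bga|\bgb})^{*}\cap M^{\bmu|\bnu}$ via Corollary~\ref{Cor:Hom}, reading off the basis from Proposition~\ref{Prop:Basis} with $\varphi_{\mathrm{ST}}\mapsto m_{\mathrm{ST}}$, and then proving (ii) by evaluating on the cyclic generators and straightening with the cellular bases of Theorem~\ref{Them:mn-basis}, the first indices being spectators — is exactly that intended argument. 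The mixed $m$--$n$ bookkeeping you flag as the technical heart is indeed the only point where the super case departs from \cite{DJM}, and it is the same adaptation the paper leaves implicit (following the signed constructions of \cite{DR}), so your attempt matches the paper's proof in both structure and level of detail.
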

\begin{proof}
  (i) Corollary~\ref{Cor:Hom}(ii) implies that the morphism $\varphi\mapsto \varphi(m_{\bmu|\bnu})$ gives an isomorphism between $\mathrm{Hom}_{H}(M_{\bga|\bgb}, M_{\bmu|\bnu})$ and $M_{\bga|\bgb}^*\cap M_{\bmu|\bnu}$. Thus  Proposition~\ref{Prop:Basis} and Definition~\ref{Def:bar-varphi} shows the set $\{\varphi_{\mathrm{S}\mathrm{T}}\}$ is a basis of  $\mathscr{S}(\bk|\bl;n)$.

  (ii) We may assume that $\varphi\in \mathrm{Hom}_{H}(M_{\bkp|\btau}, M_{\bga|\bgb})$ for some $\bkp|\btau\in \mathcal{C}(\bk|\bl;n)$. Suppose that $\varphi(m_{\bkp|\btau})=m_{\bga|\bgb}h$ for some $h\in H$. Then,  for all $\mathrm{S}\in \std(\bgl, \bmu|\bnu)$ and $\mathrm{T}\in \std(\bgl, \bga|\bgb)$, we have
  \begin{equation*}
    \varphi_{\mathrm{ST}}\varphi(m_{\bkp|\btau})=m_{\mathrm{ST}}h\in M_{\bkp|\btau}^*\cap M_{\bmu|\bnu}.
  \end{equation*}
  By applying Proposition~\ref{Prop:Basis}, there exists $\tilde{\bgl}\in H(\bk|\bl;n)$,  $r_{\mathrm{UV}}\in R$ such that
  \begin{equation*}
    m_{\mathrm{ST}}h=\sum_{\mathrm{U}\in \std(\tilde{\bgl},\bmu|\bnu),\mathrm{V}\in \std(\tilde{\bgl},\bkp|\btau)}r_{\mathrm{UV}}m_{\mathrm{UV}}.
  \end{equation*}
Thank to Remark~\ref{Remark:m-ST}, \cite[Proposition~3.25]{DJM} shows
\begin{equation*}
  m_{\mathrm{ST}}h=\sum_{\widetilde{\mathrm{T}}\in \std(\bgl,\bkp|\btau)}r_{\widetilde{\mathrm{T}}}m_{\mathrm{S}\widetilde{\mathrm{T}}}+
  \sum r_{\mathrm{UV}}m_{\mathrm{UV}},
\end{equation*}
where $r_{\widetilde{\mathrm{T}}}$, $r_{\mathrm{UV}}\in R$ and the second sum is over $\mathrm{U}\in \std(\tilde{\bgl},\bmu|\bnu)$ and $\mathrm{V}\in \std(\tilde{\bgl},\bkp|\btau)$ for some $\tilde{\bgl}\in H(\bk|\bl;n)$ with $\tilde{\bgl}\rhd \bgl$. So
\begin{equation*}
 \varphi_{\mathrm{ST}}\varphi=\sum_{\widetilde{\mathrm{T}}\in \std(\bgl,\bkp|\btau)}r_{\widetilde{\mathrm{T}}}m_{\mathrm{S}\widetilde{\mathrm{T}}}
 \mathrm{mod\,}\overline{\mathscr{S}(\bk|\bl;n)}_{\bgl}.
\end{equation*}
It completes the proof.
\end{proof}
\begin{theorem}\label{Them:Cellular-algebra}
$\mathscr{S}(\bk|\bl;n)$ is a cellular algebra with (homogeneous) cellular basis
  \begin{equation*}
     \left\{\varphi_{\mathrm{ST}}\left|\begin{array}{l}\mathrm{S}\in \std(\bgl,\bmu|\bnu), \mathrm{T}\in \std(\bgl,\bga|\bgb) \text{ for some }\bmu|\bnu, \\\bga|\bgb\in \mathcal{C}(\bk|\bl;n)  \text{ and } \bgl\in H(\bk|\bl;n)\end{array}\right. \right\}.
   \end{equation*}
   and anti-involution $*$ defined by $\varphi_{\mathrm{ST}}^*=\varphi_{\mathrm{TS}}$.
\end{theorem}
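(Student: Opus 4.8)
The plan is to verify directly the axioms of a cellular algebra in the sense of Graham and Lehrer, taking as cell datum the poset $\Lambda=H(\bk|\bl;n)$ ordered by dominance $\rhd$, with finite index sets $M(\bgl)=\{\mathrm{S}\mid \mathrm{S}\in\std(\bgl,\bmu|\bnu)\text{ for some }\bmu|\bnu\in\mathcal{C}(\bk|\bl;n)\}$ for $\bgl\in H(\bk|\bl;n)$, and with cellular basis elements $C(\mathrm{S},\mathrm{T})=\varphi_{\mathrm{ST}}$. That this family is a free homogeneous $R$-basis of $\mathscr{S}(\bk|\bl;n)$ is exactly part~(i) of the preceding theorem, so the basis axiom requires no further work, and homogeneity with respect to the $\mathbb{Z}_2$-grading is recorded there as well.

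First I would produce the anti-involution. Starting from the anti-automorphism $*\colon H\to H$ with $T_i^*=T_i$, I would use the identification $\mathrm{Hom}_{H}(M^{\bga|\bgb},M^{\bmu|\bnu})\cong (M^{\bga|\bgb})^*\cap M^{\bmu|\bnu}$ of Corollary~\ref{Cor:Hom}(ii), together with the observation that $*$, being an involution of $H$, carries $(M^{\bga|\bgb})^*\cap M^{\bmu|\bnu}$ bijectively onto $(M^{\bmu|\bnu})^*\cap M^{\bga|\bgb}$. Composing these identifications defines an $R$-linear map $\varphi\mapsto\varphi^*$ from $\mathrm{Hom}_{H}(M^{\bga|\bgb},M^{\bmu|\bnu})$ to $\mathrm{Hom}_{H}(M^{\bmu|\bnu},M^{\bga|\bgb})$; that it is an anti-involution of $\mathscr{S}(\bk|\bl;n)$ follows because $*$ is one on $H$. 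Since $\varphi_{\mathrm{ST}}$ corresponds under Corollary~\ref{Cor:Hom} to the basis element $m_{\mathrm{ST}}$ of Proposition~\ref{Prop:Basis} and since $m_{\mathrm{ST}}^*=m_{\mathrm{TS}}$, I obtain $\varphi_{\mathrm{ST}}^*=\varphi_{\mathrm{TS}}$; because the grading of $\mathrm{Hom}_{H}(M^{\bmu|\bnu},M^{\bga|\bgb})$ is $|\bnu|+|\bgb|\pmod 2$, which is symmetric in the two modules, $*$ is a \emph{homogeneous} anti-involution, as required for a cellular superalgebra.

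Next I would check the straightening axiom. Set $\overline{\mathscr{S}(\bk|\bl;n)}_{\bgl}$ equal to the span of all $\varphi_{\mathrm{UV}}$ whose common shape strictly dominates $\bgl$; I would first note that it is a two-sided ideal (this is part~(ii) of the preceding theorem) and that it is $*$-stable, since $\varphi_{\mathrm{UV}}^*=\varphi_{\mathrm{VU}}$ has the same shape. Part~(ii) of the preceding theorem gives, for every $\varphi\in\mathscr{S}(\bk|\bl;n)$ and every $\mathrm{T}\in\std(\bgl,\bga|\bgb)$, scalars $r_{\mathrm{U}}\in R$ depending only on $\varphi,\mathrm{T},\mathrm{U}$ such that
\begin{equation*}
\varphi_{\mathrm{ST}}\varphi\equiv\sum_{\bkp|\btau\in\mathcal{C}(\bk|\bl;n)}\sum_{\mathrm{U}\in\std(\bgl,\bkp|\btau)}r_{\mathrm{U}}\,\varphi_{\mathrm{SU}}\pmod{\overline{\mathscr{S}(\bk|\bl;n)}_{\bgl}}
\end{equation*}
for all $\mathrm{S}$, the essential point being that $r_{\mathrm{U}}$ is independent of $\mathrm{S}$. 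Applying the anti-involution and using the $*$-stability of the cell ideal turns this into $\varphi^*\varphi_{\mathrm{TS}}\equiv\sum_{\mathrm{U}}r_{\mathrm{U}}\varphi_{\mathrm{US}}$ modulo $\overline{\mathscr{S}(\bk|\bl;n)}_{\bgl}$; as $\varphi^*$ ranges over all of $\mathscr{S}(\bk|\bl;n)$ this is precisely the Graham--Lehrer multiplication rule, with coefficients independent of the fixed index $\mathrm{S}$. Together with the previous two paragraphs, this exhibits the cell datum $(\Lambda,M,C,*)$ and completes the verification.

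I expect the main obstacle to be the bookkeeping around the anti-involution: one must check that the two identifications of Corollary~\ref{Cor:Hom} are genuinely intertwined by $*$ and that the resulting map squares to the identity and reverses products on all of $\mathscr{S}(\bk|\bl;n)$, rather than merely on each $\mathrm{Hom}$-space in isolation. Once $\varphi_{\mathrm{ST}}^*=\varphi_{\mathrm{TS}}$ and the $*$-stability of $\overline{\mathscr{S}(\bk|\bl;n)}_{\bgl}$ are in hand, the passage from the one-sided relation of part~(ii) to the full cellular axiom is formal, so the remaining work is routine.
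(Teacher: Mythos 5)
Your proposal is correct and takes essentially the same route as the paper: the paper likewise induces the anti-involution from the $*$-anti-automorphism of $H$ via $\varphi_{\mathrm{ST}}(m_{\bga|\bgb})=m_{\mathrm{ST}}=(m_{\mathrm{TS}})^*=(\varphi_{\mathrm{TS}}(m_{\bmu|\bnu}))^*$ (citing \cite[Lemma~6.10]{DJM} where you unwind Corollary~\ref{Cor:Hom} by hand), and it likewise obtains the two-sided cell-ideal property by applying $*$ to the one-sided multiplication rule of part~(ii) of the preceding theorem, which shows $\overline{\mathscr{S}(\bk|\bl;n)}_{\bgl}$ is a right ideal and hence also a left ideal. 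No gaps to report.
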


\begin{proof}Recall that the the anti-automorphism $*$ of $H$ defined in \S2.4. Then
   $m_{\bmu|\bnu}^*=m_{\bmu|\bnu}$ for all $\bnu|\bmu\in \mathcal{C}(\bk|\bl;n)$. Furthermore, for $\mathrm{S}\in \std(\bgl,\bmu|\bnu)$ and $\mathrm{T}\in\std(\bgl, \bga|\bgb)$ for some $\bmu|\bnu,\bga|\bgb\in \mathcal{C}(\bk|\bl;n)$, $\bgl\in H(\bk|\bl;n)$, we have
   \begin{equation*}
     \varphi_{\mathrm{ST}}(m_{\bga|\bgb})=m_{\mathrm{ST}}=m_{\mathrm{TS}}^*= \varphi_{\mathrm{TS}}(m_{\bmu|\bnu})^*.
   \end{equation*}
    Therefore \cite[Lemma~6.10]{DJM} shows  $*$ is the anti-automorphism of $\mathscr{S}(\bk|\bl;n)$.

    Now Theorem~\ref{Them:main}(ii) shows that $\overline{\mathscr{S}(\bk|\bl;n)}_{\bgl}$ is a right ideal of $\mathscr{S}(\bk|\bl;n)$. By applying the anti-automorphism $*$, we deduce that it is also a left ideal of $\mathscr{S}(\bk|\bl;n)$. Hence the theorem follows by applying Theorem~\ref{Them:main}.
\end{proof}

\section{Applications}\label{Sec:Weyl-supermodules}
This section deals with the representation theory of $\mathscr{S}(\bk|\bl;n)$ by applying the theory of cellular algebras. We also establish the Schur-Weyl duality between the cyclotomic Hecke algebra and the cyclotomic $q$-Schur superalgebra at the end of the section.

 For $\bgl\in H(\bk|\bl;n)$, if we let $\mathrm{T}^{\bgl}=\bgl_{\sharp}|\bgl_{\ast}(\ft^{\bgl})$ (see Definition~\ref{Def:mu|nu(s)}), then $\mathrm{T}^{\bgl}$ is the unique $(\bk,\bl)$-semistandard $\bgl$-tableau of type $\bgl_{\sharp}|\bgl_{\ast}$ (see Example~\ref{Exam:tableau-type}). Define $\varphi_{\bgl}=\varphi_{\mathrm{T}^{\bgl}\mathrm{T}^{\bgl}}$, then $\varphi_{\bgl}$ is the identity map on $M_{\bgl_{\sharp}|\bgl_{\ast}}$.

\begin{definition}For $\bgl\in H(\bk|\bl;n)$, we define the \textit{Weyl supermodule} $W_{\bgl}$ to be the submodule of $\mathscr{S}(\bk|\bl;n)/\mathscr{S}(\bk|\bl;n)_{\bgl}$ generated by $\varphi_{\bgl}$.
\end{definition}

For a $(\bk,\bl)$-semistandard $\bgl$-tableau $\mathrm{S}$, we define
\begin{equation*}
  \varphi_{\mathrm{S}}:=\varphi_{\mathrm{ST}^{\bgl}}(\varphi_{\bgl}+
  \overline{\mathscr{S}(\bk|\bl;n)_{\bgl}})=\varphi_{\mathrm{ST}^{\bgl}}+
  \overline{\mathscr{S}(\bk|\bl;n)}_{\bgl}.
\end{equation*}
The following result follows directly by applying Theorem~\ref{Them:Cellular-algebra}.

\begin{corollary}
  The Weyl supermodule $W_{\bgl}$ is a free $R$-supermodule with homogeneous basis
  \begin{equation*}
    \{\varphi_{\mathrm{S}}|\mathrm{S}\in \std(\bgl,\bmu|\bnu)\text{ for some }\bmu|\bnu\in \mathcal{C}(\bk|\bl;n)\}.
  \end{equation*}
\end{corollary}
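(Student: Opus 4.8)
The plan is to read this off as the standard Graham--Lehrer construction of cell (Weyl) modules for a cellular algebra, specialised to the cellular basis produced in Theorem~\ref{Them:Cellular-algebra}, and to verify the two halves of the basis assertion --- spanning and linear independence --- separately. As the remark before the statement indicates, everything is forced by the cellular structure, so the work is purely organisational.

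First I would record the two structural inputs furnished by Theorem~\ref{Them:Cellular-algebra}. On the one hand, the set $\{\varphi_{\mathrm{ST}}\}$, ranging over all $\bgl\in H(\bk|\bl;n)$ and all $\mathrm{S}\in\std(\bgl,\bmu|\bnu)$, $\mathrm{T}\in\std(\bgl,\bga|\bgb)$, is a free $R$-basis of $\mathscr{S}(\bk|\bl;n)$, and $\overline{\mathscr{S}(\bk|\bl;n)}_{\bgl}$ is precisely the $R$-span of those basis elements whose shape $\tilde{\bgl}$ strictly dominates $\bgl$. On the other hand, the cellular multiplication congruence holds. Since $\varphi_{\bgl}=\varphi_{\mathrm{T}^{\bgl}\mathrm{T}^{\bgl}}$ is the identity on $M^{\bgl_{\sharp}|\bgl_{\ast}}$ and is zero on the remaining summands, for any $\mathrm{S}\in\std(\bgl,\bmu|\bnu)$ one has $\varphi_{\mathrm{ST}^{\bgl}}\varphi_{\bgl}=\varphi_{\mathrm{ST}^{\bgl}}$ as homomorphisms; hence $\varphi_{\mathrm{S}}=\varphi_{\mathrm{ST}^{\bgl}}+\overline{\mathscr{S}(\bk|\bl;n)}_{\bgl}$ is well defined and genuinely lies in $W^{\bgl}=\mathscr{S}(\bk|\bl;n)\varphi_{\bgl}$, because it equals $\varphi_{\mathrm{ST}^{\bgl}}$ acting on the generator.

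For spanning I would invoke the cellular multiplication congruence of Theorem~\ref{Them:Cellular-algebra}, which fixes the first tableau under right multiplication. Transporting it across the anti-involution $*$ (which fixes $\overline{\mathscr{S}(\bk|\bl;n)}_{\bgl}$ and interchanges $\varphi_{\mathrm{SU}}$ with $\varphi_{\mathrm{US}}$) yields the left-hand analogue: for every $\psi\in\mathscr{S}(\bk|\bl;n)$ one obtains $\psi\,\varphi_{\bgl}\equiv\sum_{\mathrm{U}}r_{\mathrm{U}}\varphi_{\mathrm{U}}\pmod{\overline{\mathscr{S}(\bk|\bl;n)}_{\bgl}}$, the sum running over $(\bk,\bl)$-semistandard $\bgl$-tableaux $\mathrm{U}$ with coefficients $r_{\mathrm{U}}\in R$. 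Thus $W^{\bgl}$ is contained in the $R$-span of $\{\varphi_{\mathrm{S}}\}$, and by the previous paragraph it equals that span.

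Linear independence is then immediate from the two structural inputs: the elements $\varphi_{\mathrm{ST}^{\bgl}}$ with $\mathrm{S}\in\std(\bgl,\bmu|\bnu)$ and $\bmu|\bnu\in\mathcal{C}(\bk|\bl;n)$ all have shape exactly $\bgl$, so they form a subset of the free $R$-basis of $\mathscr{S}(\bk|\bl;n)$ that is disjoint from the spanning set of $\overline{\mathscr{S}(\bk|\bl;n)}_{\bgl}$; consequently their images $\varphi_{\mathrm{S}}$ are $R$-linearly independent in the quotient. Together with spanning this shows $W^{\bgl}$ is free with the asserted basis. I expect no genuine obstacle beyond bookkeeping: the only point demanding care is matching the side of the module action --- transporting the multiplication congruence across $*$ so that it is the first tableau $\mathrm{S}$ that varies while the generator $\varphi_{\bgl}$ stays fixed --- together with the verification that $\varphi_{\bgl}$ acts as the identity, which guarantees that each $\varphi_{\mathrm{S}}$ is a well-defined and nonzero class.
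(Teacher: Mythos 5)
Your proposal is correct and is essentially the argument the paper intends: the paper disposes of this corollary in one line as a direct consequence of the cellular basis in Theorem~\ref{Them:Cellular-algebra}, and what you have written is precisely the standard Graham--Lehrer cell-module verification behind that citation (spanning via the multiplication congruence transported through the anti-involution $*$, and linear independence because the elements $\varphi_{\mathrm{ST}^{\bgl}}$ belong to the free basis of $\mathscr{S}(\bk|\bl;n)$ and are disjoint from the spanning set of $\overline{\mathscr{S}(\bk|\bl;n)}_{\bgl}$). Your care about the side of the action, matching the right-handed congruence of the theorem to the left action defining $W^{\bgl}$, is exactly the bookkeeping the paper leaves implicit.
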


Now we define a bilinear form $\langle\,,\rangle$ on $W_{\bgl}$ by letting
\begin{equation*}
  \varphi_{\mathrm{T}^{\bgl}\mathrm{S}}\varphi_{\mathrm{T}\mathrm{T}^{\bgl}}\equiv \langle \varphi_{\mathrm{S}}, \varphi_{\mathrm{T}}\rangle\mod \overline{\mathscr{S}(\bk|\bl;n)_{\bgl}}
\end{equation*}
for all $(\bk,\bl)$-semistandard $\bgl$-tableaux $\mathrm{S}$ and $\mathrm{T}$. Then this bilinear form is well-defined and symmetric, and also satisfies $\langle \phi\varphi, \psi\rangle=\langle\varphi,\phi^*\psi\rangle$ for all $\varphi,\psi\in W_{\bgl}$ and $\phi\in\mathscr{S}(\bk|\bl;n)$ (see \cite[Proposition~2.4]{GL}). Consequently,
\begin{equation*}
  \mathrm{rad\,}W_{\bgl}=\{\phi\in W_{\bgl}|\langle \phi, \psi\rangle=0\text{ for all }\psi\in W^{\bgl}\}
\end{equation*}
is a submodule of $W_{\bgl}$. Now we define $F_{\bgl}=W_{\bgl}/\mathrm{rad\,}(W_{\bgl})$.
\begin{theorem}\label{Them:irr}Notations being as above and suppose that $R$ is a field. Then
\begin{equation*}
  \{F_{\bgl}|\bgl\in H(\bk|\bl;n)\}
\end{equation*}
is a complete set of non-isomorphic irreducible $\mathscr{S}(\bk|\bl;n)$-modules. Moreover, each $F_{\bgl}$ is absolutely irreducible.
\end{theorem}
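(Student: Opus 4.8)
The plan is to deduce this statement directly from Graham and Lehrer's general representation theory of cellular algebras \cite{GL}, feeding in the cellular structure established in Theorem~\ref{Them:Cellular-algebra}. Recall their basic dichotomy: for a cellular algebra over a field, with cell modules indexed by the underlying poset and each carrying the associated symmetric bilinear form $\langle\,,\rangle$, the nonzero quotients $F^{\bgl}=W^{\bgl}/\mathrm{rad}\,W^{\bgl}$ constitute a complete set of pairwise non-isomorphic \emph{absolutely} irreducible modules, and one has $F^{\bgl}\neq 0$ if and only if $\langle\,,\rangle\not\equiv 0$ on $W^{\bgl}$ \cite[Theorem~3.4]{GL}. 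In the present situation the index set is $H(\bk|\bl;n)$ and the cell modules are the Weyl supermodules $W^{\bgl}$. Consequently the entire theorem reduces to the single point that the cell form on $W^{\bgl}$ is nonzero for \emph{every} $\bgl\in H(\bk|\bl;n)$: once this is verified, no index is discarded, so the full family $\{F^{\bgl}\mid \bgl\in H(\bk|\bl;n)\}$ is obtained, and the completeness and absolute irreducibility are then immediate from \cite[Theorem~3.4]{GL}.

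To establish the nonvanishing I would evaluate the form on the distinguished generator $\varphi_{\mathrm{T}^{\bgl}}$. By construction $\varphi_{\bgl}=\varphi_{\mathrm{T}^{\bgl}\mathrm{T}^{\bgl}}$ is the identity endomorphism of the summand $M^{\bgl_{\sharp}|\bgl_{\ast}}$, hence an idempotent of $\mathscr{S}(\bk|\bl;n)$ satisfying $\varphi_{\bgl}^{2}=\varphi_{\bgl}$. Setting $\mathrm{S}=\mathrm{T}=\mathrm{T}^{\bgl}$ in the defining congruence for the form,
\begin{equation*}
\varphi_{\mathrm{T}^{\bgl}\mathrm{S}}\,\varphi_{\mathrm{T}\mathrm{T}^{\bgl}}\equiv\langle\varphi_{\mathrm{S}},\varphi_{\mathrm{T}}\rangle\,\varphi_{\bgl}\pmod{\overline{\mathscr{S}(\bk|\bl;n)}_{\bgl}},
\end{equation*}
the left-hand side becomes $\varphi_{\bgl}^{2}=\varphi_{\bgl}$, and comparing with the right-hand side yields $\langle\varphi_{\mathrm{T}^{\bgl}},\varphi_{\mathrm{T}^{\bgl}}\rangle=1$. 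Since $1\neq 0$ in the field $R$, the form is nonzero; equivalently $\varphi_{\mathrm{T}^{\bgl}}\notin\mathrm{rad}\,W^{\bgl}$, so $\mathrm{rad}\,W^{\bgl}\subsetneq W^{\bgl}$ and $F^{\bgl}\neq 0$ for all $\bgl\in H(\bk|\bl;n)$.

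Combining the two steps finishes the argument: the nonvanishing ensures that the whole indexing set $H(\bk|\bl;n)$ survives, so $\{F^{\bgl}\mid \bgl\in H(\bk|\bl;n)\}$ is a complete set of non-isomorphic irreducible $\mathscr{S}(\bk|\bl;n)$-modules, each absolutely irreducible by \cite[Theorem~3.4]{GL}. The only genuine content beyond invoking the cellular machinery is the nonvanishing of the cell form, and I expect this to be the main---albeit mild---obstacle. Its verification rests entirely on the idempotency $\varphi_{\bgl}^{2}=\varphi_{\bgl}$, which is transparent from the fact that $\varphi_{\bgl}$ is the identity map on $M^{\bgl_{\sharp}|\bgl_{\ast}}$; the one thing to confirm carefully is that this product of cell basis elements is genuinely congruent to $\varphi_{\bgl}$ and is not absorbed into the ideal $\overline{\mathscr{S}(\bk|\bl;n)}_{\bgl}$ of higher-indexed terms, which is exactly what the cellular multiplication rule of Theorem~\ref{Them:Cellular-algebra}(ii) guarantees.
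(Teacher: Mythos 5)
Your proposal is correct and follows essentially the same route as the paper: the paper likewise reduces everything to \cite[Theorem~3.4]{GL} and verifies $F^{\bgl}\neq 0$ by computing $\varphi_{\mathrm{T}^{\bgl}\mathrm{T}^{\bgl}}\varphi_{\mathrm{T}^{\bgl}\mathrm{T}^{\bgl}}=\varphi_{\bgl}$ (the identity on $M^{\bgl_{\sharp}|\bgl_{\ast}}$) to conclude $\langle\varphi_{\mathrm{T}^{\bgl}},\varphi_{\mathrm{T}^{\bgl}}\rangle=1$. Your added remark that the product is not absorbed into $\overline{\mathscr{S}(\bk|\bl;n)}_{\bgl}$ is a welcome explicit check, but it is the same argument.
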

\begin{proof}
  For $\bgl\in H(\bk|\bl;n)$, we have
  \begin{equation*}
    \varphi_{\mathrm{T}^{\bgl}\mathrm{T}^{\bgl}}\varphi_{\mathrm{T}^{\bgl}\mathrm{T}^{\bgl}}
    \equiv \langle \varphi_{\mathrm{T}^{\bgl}}, \varphi_{\mathrm{T}^{\bgl}}\rangle\,\mod\overline{\mathscr{S}(\bk|\bl;n)_{\bgl}}.
  \end{equation*}
  While $\varphi_{\mathrm{T}^{\bgl}\mathrm{T}^{\bgl}}\varphi_{\mathrm{T}^{\bgl}\mathrm{T}^{\bgl}}
  =\varphi_{\bgl}$ is the identity on $M_{\bgl_{\sharp}|\bgl_{\ast}}$, thus $\langle \varphi_{\mathrm{T}^{\bgl}}, \varphi_{\mathrm{T}^{\bgl}}\rangle=1$ and $F_{\bgl}\neq 0$. The theorem follows from \cite[Thereom~3.4]{GL}.
\end{proof}

\begin{remark}For $\bgl,\bmu\in H(\bk|\bl;n)$, we denote $d_{\bgl\bmu}$ the composition multiplicity of $F_{\bmu}$ as a composition factor of $W_{\bgl}$. Then $(d_{\bgl\bmu})_{\bgl,\bmu\in H(\bk|\bl;n)}$ is the \textit{decomposition matrix} of $\mathscr{S}(\bk|\bl;n)$. The theory of cellular algebras \cite[Proposition~3.6]{GL} shows the decomposition matrix of $\mathscr{S}(\bk|\bl;n)$ is unitriangular.
Finally, combining Theorem~\ref{Them:irr} and \cite[Remark~3.10]{GL}, we yield that the cyclotomic $q$-Schur superalgebra is quasi-hereditary.
\end{remark}

\begin{remark}By applying similar argument, we can show that the cyclotomic $q$-Schur superalgebra $\mathscr{S}(\bk|\bl;\Lambda)$ has the same properties as those of $\mathscr{S}(\bk|\bl;n)$ for any saturated subset $\Lambda$ of $\mathcal{C}(\bk|\bl;n)$.
\end{remark}

The following fact can be proved by applying a similar argument as that of \cite[Theorem~5.3]{Mathas-ASPM} (see also \cite[\S7]{SW}).
\begin{theorem}[Double centralizer property]\label{Them:Schur-Weyl}Notations being as above. Then
\begin{equation*}
  \mathscr{S}(\bk|\bl;n)\cong \mathrm{End}_{H}\biggl(\bigoplus_{\bmu|\bnu\in\mathcal{C}(\bk|\bl;n)}M_{\bmu|\bnu}\biggr)\text{ and } H\cong\mathrm{End}_{\mathscr{S}(\bk|\bl;n)}
  \biggl(\bigoplus_{\bmu|\bnu\in\mathcal{C}(\bk|\bl;n)}M_{\bmu|\bnu}\biggr).
\end{equation*}
\end{theorem}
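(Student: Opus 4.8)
The first isomorphism is immediate from the definition: by the Remark following Definition~\ref{Def:Cyclotomi-Schur} one has $\mathscr{S}(\bk|\bl;n)\cong\bigoplus_{\bmu|\bnu,\bga|\bgb}\mathrm{Hom}_{H}(M^{\bmu|\bnu},M^{\bga|\bgb})=\End_{H}(M)$, where I abbreviate $M:=\bigoplus_{\bmu|\bnu\in\mathcal{C}(\bk|\bl;n)}M^{\bmu|\bnu}$, so there is nothing to prove there. All of the content lies in the second isomorphism $H\cong\End_{\mathscr{S}(\bk|\bl;n)}(M)$, and the plan is to follow \cite[Theorem~5.3]{Mathas-ASPM}: write down the natural map, prove injectivity via faithfulness of $M$, and prove surjectivity using the double annihilator Theorem~\ref{Them:ann} together with Corollary~\ref{Cor:Hom}.

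First I would record the natural assignment $\eta\colon H\to\End_{\mathscr{S}(\bk|\bl;n)}(M)$, $h\mapsto\rho_h$, where $\rho_h$ is right multiplication by $h$. This lands in $\End_{\mathscr{S}(\bk|\bl;n)}(M)$ because every element of $\mathscr{S}(\bk|\bl;n)=\End_{H}(M)$ is a right $H$-module map and hence commutes with $\rho_h$; and the assignment is an \emph{anti}-homomorphism ($\rho_{h_1h_2}=\rho_{h_2}\rho_{h_1}$), so I would compose it with the anti-involution $*$ of $H$ to obtain the stated isomorphism. It therefore suffices to prove $\eta$ is bijective. Injectivity is exactly faithfulness of $M$ as an $H$-module, since $\ker\eta=\{h\mid Mh=0\}$; when $n\le k_m+\ell_m$ the multicomposition concentrated in the last color with singleton parts has $m_{\bmu|\bnu}=1$, so $M$ contains the regular module $H$ as a summand and faithfulness is clear, while in general it is the delicate super input discussed below.

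For surjectivity the engine is Theorem~\ref{Them:ann}, $lr(m_{\bmu|\bnu})=Hm_{\bmu|\bnu}$, combined with Corollary~\ref{Cor:Hom}. Together these say that every $\varphi\in\mathrm{Hom}_{H}(M^{\bmu|\bnu},M^{\bga|\bgb})$ has the form $\varphi(m_{\bmu|\bnu}h)=a\,m_{\bmu|\bnu}h$ for a suitable $a\in H$; that is, the $\mathscr{S}(\bk|\bl;n)$-action on $M$ is realized by left multiplications from $H$. I would then take $\theta\in\End_{\mathscr{S}(\bk|\bl;n)}(M)$, use that $\theta$ commutes with the orthogonal idempotents $e_{\bmu|\bnu}$ to see it is block diagonal, and use commutation with all the cross maps in $\mathrm{Hom}_{H}(M^{\bmu|\bnu},M^{\bga|\bgb})$ to conclude that $\theta$ lies in the commutant of the left multiplications coming from $\mathscr{S}(\bk|\bl;n)$. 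On the regular summand this commutant is the classical bicommutant $\End_{H\text{-left}}(H)=H^{op}$ whose own commutant is $H$, and faithfulness propagates the conclusion $\theta=\rho_h$ to all of $M$. Conceptually this is just the statement that $M$ is a faithful module over the symmetric algebra $H$, so that $\End_{\mathscr{S}(\bk|\bl;n)}(M)\cong H/\mathrm{ann}_{H}(M)=H$; equivalently, in the language of \cite{Mathas-ASPM}, $M$ is a tilting module for the quasi-hereditary algebra $\mathscr{S}(\bk|\bl;n)$ (it has a Weyl filtration transported from Theorem~\ref{Them:M-basis}, and since $m_{\bmu|\bnu}^*=m_{\bmu|\bnu}$ it is self-dual under the cellular anti-involution, hence also has a dual-Weyl filtration), and $\End_{\mathscr{S}(\bk|\bl;n)}(M)$ is its Ringel dual, which one identifies with $H$.

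The hard part will be two things. The first and decisive one is faithfulness of $M$: because $\mathcal{C}(\bk|\bl;n)$ has a bounded number of parts, $M$ is the cyclotomic \emph{super tensor space} rather than the full DJM permutation module, and it is faithful only when $H(\bk|\bl;n)$ is large enough to detect every cell module of $H$ (for instance it fails in the purely even case $\ell=0$ with $n$ large, where ordinary Schur--Weyl already has a kernel). This is precisely the analogue of the ``certain conditions'' attached to \cite[Theorem~5.3]{Mathas-ASPM}, and here I would invoke the super Schur--Weyl reciprocity of \cite{Zhao-duality} to supply it in the genuinely super regime. The second obstacle is to pin down $\End_{\mathscr{S}(\bk|\bl;n)}(M)$ as $H$ itself, and not merely a Morita-equivalent algebra, over the arbitrary integral domain $R$ rather than a field; this is exactly where the explicit double annihilator Theorem~\ref{Them:ann} and a rank comparison through the cellular bases of $H$ and $\mathscr{S}(\bk|\bl;n)$ do the decisive work, as in the integral treatment of \cite{Mathas-ASPM}.
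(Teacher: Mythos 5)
The paper offers no argument of its own here: the ``proof'' is the single sentence that one argues as in \cite[Theorem~5.3]{Mathas-ASPM}, and your outline is a faithful reconstruction of exactly that argument --- the first isomorphism is definitional, the second is the bicommutant map $h\mapsto\rho_{h^*}$, with Theorem~\ref{Them:ann} and Corollary~\ref{Cor:Hom} realizing every element of $\mathscr{S}(\bk|\bl;n)$ as a left multiplication, and the commutant then computed on the regular summand via the idempotent step $\End_{\mathscr{S}}(\mathscr{S}e)\cong e\mathscr{S}e\cong\End_{H}(H)$. On the parameter range where the analogue of Mathas's $\omega$ actually lies in $\mathcal{C}(\bk|\bl;n)$ --- your observation that $m_{\bmu|\bnu}=1$ for the multicomposition with singleton parts concentrated in the last color, i.e.\ $n\le k_m+\ell_m$ --- your proposal is complete and coincides with the paper's intended proof.

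The genuine gap is your fallback for the remaining range: invoking \cite{Zhao-duality} to ``supply faithfulness in the genuinely super regime'' cannot work, because faithfulness genuinely fails there as well. Super Schur--Weyl reciprocity (as in \cite{Moon,M} and \cite{Zhao-duality}) identifies the \emph{image} of $H$ in the endomorphism algebra of tensor space with the full commutant; it never asserts injectivity, and since $M=\bigoplus_{\bmu|\bnu}M^{\bmu|\bnu}$ is precisely the weight-space decomposition of the super tensor space, the kernel is nonzero as soon as some multipartition of $n$ fails to be a $(\bk,\bl)$-hook multipartition. Already for $m=1$, $k=\ell=1$, $n\ge 4$ the partition $(2,2)$ is non-hook and kills faithfulness; and in your own even example $m=1$, $k=1$, $\ell=0$, $n\ge 2$ one computes $M=x_{(n)}H$ is free of rank one, so $\End_{\mathscr{S}(\bk|\bl;n)}(M)\cong R\neq H$. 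Structurally the obstruction is visible in Theorem~\ref{Them:M-basis}: every cell factor of $M$ is indexed by a multipartition in $H(\bk|\bl;n)$, so the non-hook cells of $H$ are invisible to $M$ and one only ever gets $\End_{\mathscr{S}(\bk|\bl;n)}(M)\cong H/\mathrm{ann}_{H}(M)$. Thus no argument along your (or the paper's) lines can close the gap you flagged; the statement itself requires a hypothesis such as $n\le k_m+\ell_m$ --- the super analogue of Mathas's standing assumption that $\omega$ belongs to the index set, and evidently the ``certain conditions'' alluded to in the paper's introduction --- a restriction that the paper's one-line proof-by-citation inherits silently, and about which your write-up is in fact more candid than the paper.
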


We remark in closing that there are various cyclotomic $q$-Schur algebras. Let us mention some of them: Ariki \cite{A} introduced a cyclotomic $q$-Schur algebra as the endomorphism algebra of the action of $H$ on a $q$-tensor space, which is a quotient of the quantum algebra; Lin and Rui \cite{LR} defined a cyclotomic $q$-Schur algebra via the affine tensor space, which is a quotient of the affine quantum algebra;  Sawada and Shoji \cite{SS-Schur} defined the cyclotomic $q$-Schur algebra as the endomorphism algebra of the action of modified Ariki-Koike algebras on the $q$-tensor space; Deng et.\,al.\,\cite{DDY} introduce the slim cyclotomic $q$-Schur algebra.  It may be interesting to formulate the super-versions of these cyclotomic $q$-Schur algebras.
\bibliographystyle{amsplain}

\end{document}